\def\@cite#1#2{%
	[\textcolor{red}{\textbf{#1}}\if@tempswa, #2\fi]%
}
\theoremstyle{plain}
\newtheorem{theorem}{Theorem}[section]
\newtheorem{lemma}[theorem]{Lemma}
\newtheorem{corollary}[theorem]{Corollary}
\newtheorem{remark}{Remark}
\titleformat{\section}{\centering\bfseries\small}{\thesection .}{.5em}{}
\titleformat{\subsection}{\bfseries\small}{\thesubsection .}{.5em}{}
\titleformat{\subsubsection}{\bfseries\slshape\small}{\thesubsubsection .}{.5em}{}
\numberwithin{equation}{section}
\newcommand{\CC}{\mathbb{C}}
\newcommand{\DD}{\mathbb{D}}
\newcommand{\RR}{\mathbb{R}}
\newcommand{\ZZ}{\mathbb{Z}}
\newcommand{\md}{\mathrm{d}}
\newcommand{\me}{\mathrm{e}}
\newcommand{\ve}{\varepsilon}
\newcommand{\mo}{\mathcal{O}}
\newcommand{\mi}{\mathrm{i}}
\newcommand{\sm}{\setminus}
\DeclareMathOperator*{\Res}{Res}
\begin{document}
\title{\bf\large Asymptotic expansions of the Humbert Function $\Phi_1$ and their applications}

\author{Peng-Cheng Hang$^{\rm 1}$,
	    Liangjian Hu$^{\rm 1}$\thanks{Corresponding author.\\
		        E-mail:
		        \url{mathroc618@outlook.com} (Hang),
		        \url{Ljhu@dhu.edu.cn} (Hu),
		        \url{mathwinnie@dhu.edu.cn} (Luo).},
		Min-Jie Luo$^{\rm 1}$}
\affil{{\normalsize $^{\rm 1}$School of Mathematics and Statistics, Donghua University, Shanghai 201620, P.R. China}}

\date{}
\maketitle

\begin{abstract}
	This paper systematically studies the asymptotics of Humbert's bivariate confluent hypergeometric function $\Phi_1[a,b;c;x, y]$. Specifically, we establish explicit asymptotic expansions in five distinct regimes: (i) $x\to\infty$; (ii) $y\to\infty$; (iii) $x\to\infty,\,y\to\infty$; (iv) $x$ or $y$ small, $xy$ fixed; and (v) $x\to 1$, $y$ fixed. The utility of these expansions is illustrated through concrete applications in the theory of Saran's hypergeometric function $F_M$, the Glauber-Ising model, and the theory of Prabhakar-type fractional integral operators. Several potential directions for future work are also outlined.
	\vspace{4mm}
	
	\noindent
	\emph{Mathematics Subject Classification}:
	33C65; 
	41A60; 
	26A33; 
	82C20 
	\vspace{2mm}
	
	\noindent
	\emph{Keywords}: Asymptotic expansion; Humbert functions; Fractional integral operators; Glauber-Ising model
\end{abstract}

\section{Introduction}

In 1922, P. Humbert \cite{Humbert 1922} introduced seven bivariate hypergeometric functions, denoted by $\Phi_1$, $\Phi_2$, $\Phi_3$, $\Psi_1$, $\Psi_2$, $\Xi_1$ and $\Xi_2$, all of which are confluent forms of the four Appell functions $F_1$, $F_2$, $F_3$ and $F_4$. Here we focus on the function $\Phi_1$, which is defined as \cite[p. 25, Eq. (16)]{Srivastava-Karlsson-Book-1985}
\begin{equation}\label{eq:Phi_1 definition}
	\Phi_1[a,b;c;x,y]:=\sum_{m,n=0}^{\infty}\frac{(a)_{m+n}(b)_m}{(c)_{m+n}}\frac{x^m}{m!}\frac{y^n}{n!},\quad |x|<1,|y|<\infty,
\end{equation}
where $a,b\in\CC,\,c\in\CC\sm\ZZ_{\leqslant 0}$ and $(\alpha)_n=\frac{\Gamma(\alpha+n)}{\Gamma(\alpha)}$. The confluent relation between $F_1$ and $\Phi_1$ is characterized by \cite[p. 25, Eq. (18)]{Srivastava-Karlsson-Book-1985}
\begin{align*}
	\Phi_1[a,b;c;x,y]
	&=\lim_{|b'|\to\infty}F_1[a,b,b';c;x,y/b']\\
	&=\lim_{\ve\to 0}F_1[a,b,1/\ve;c;x,\ve y],
\end{align*}
where the Appell function $F_1$ is defined by \cite[p. 22, Eq. (2)]{Srivastava-Karlsson-Book-1985}
\begin{equation}\label{Def-AppellF1}
	F_{1}[a,b,b';c;x,y]:=\sum_{m,n=0}^{\infty}\frac{(a)_{m+n}(b)_m(b')_n}{(c)_{m+n}}\frac{x^m}{m!}\frac{y^n}{n!},\quad |x|<1,|y|<1.
\end{equation}
Throughout this paper, unless otherwise specified, we adopt simplified notation for the parameters:
\[\Phi_1[a,b;c;x,y]\rightsquigarrow \Phi_1[x,y].\]

Over the past century, substantial results have been accumulated in the study of the Humbert function $\Phi_1$. Let us begin by reviewing some of its analytic foundations. In his seminal 1939--1940 works \cite{Erdelyi 1939, Erdelyi 1940}, Erd\'elyi derived fundamental identities for the bivariate confluent hypergeometric functions $\Phi_1,\Phi_2,\Psi_1$ and $\Gamma_1$ by systematically investigating their defining systems of partial differential equations. Building upon this, Shimomura \cite{Shimomura 1993, Shimomura 1998} later derived the complete asymptotic expansions of $\Phi_2$ for large values of its variables. In 1993, Joshi and Bissu \cite{Joshi-Bissu 1993} established bounds for the confluent hypergeometric functions $\Phi_1,\Phi_2^{(3)}$ and $\Phi_3$ based on their series and integral representations. In 2003, Debiard and Gaveau \cite{Debiard-Gaveau 2003} applied the hypergeometric symbolic calculus to determine a basis of the solution space of the 20 confluent Horn systems, including the system satisfied by $\Phi_1$. In 2020, Mukai \cite{Mukai 2020} studied Pfaffian systems of bivariate confluent hypergeometric functions of rank 3, including the systems for $\Phi_1,\Phi_2,\Phi_3$ and $\Gamma_1$, using cohomology groups associated with their Euler-type integral representations.

The use of the Humbert function $\Phi_1$ arises in various branches of mathematics. It has been noted that $\Phi_1$ usually occurs in the study of generating functions related to the Laguerre polynomials $L_n^{(\alpha)}(z)$. Here, we mention only the following remarkable example due to Abdul-Halim and Al-Salam (see \cite[p. 57]{AbdulHalim-AlSalam-1963}; see also \cite[p. 1222]{BenCheikh-Lamiri-2007}): 
\begin{equation}\label{GF-AbdulHalim-AlSalam-1}
	\Phi_1[a,b;c;-u,-xu]
	=\sum_{n=0}^{\infty}\frac{(a)_n}{(c)_n}L_n^{(-b-n)}(x)u^n,\quad |u|<1. 
\end{equation}
An interesting extension regarding to \eqref{GF-AbdulHalim-AlSalam-1} have been carried out by Tremblay and Lavertu \cite{Tremblay-Lavertu-1972}. The Humbert function $\Phi_1$ also plays an important role in evaluating the auxiliary integral appearing in Fields' uniform treatment of Darboux's method \cite[pp. 302--304]{Fields-1967}. There are many important integral transforms with $\Phi_1$ as the kernel (see, for example, \cite{Dinh-2001, Prabhakar 1972, Prabhakar 1977, Tuan-Saigo-Duc 1996}). In Subsection \ref{SubSect-IntegralTransform}, we shall further analyze the basic properties of several of them by making use of the results obtained in this paper.

Moreover, $\Phi_1$ has significant applications in probability theory and statistics. Al-Saqabi, Kalla and Tuan \cite{AS-K-T 2003} introduced a univariate gamma-type function involving $\Phi_1$ and discussed some associated statistical functions. S\'anchez and Nagar \cite{Sanchez-Nagar 2005} derived the probability density functions for the product and quotient of two independent random variables when at least one variable is of beta type $3$. These functions are expressible in terms of $\Phi_1$ and the Appell function $F_1$. Brychkov, Jankov Ma\v{s}irevi\'c and Pog\'any \cite{B-JM-P 2022} expressed the cumulative distribution function of the non-central $\chi_{\nu}'^2(\lambda)$ distribution in closed form via specific, equal-parameter $\Phi_1$ function. They also derived the connection formula between $\Phi_1$ and $\Phi_3$, namely, 
\[\Phi_1\left[\frac{1}{2},1;1;x,y\right]=-\frac{\me^{y/2}}{\sqrt{1-x}}\left(I_0\left(\frac{y}{2}\right)+2\,\Phi_3\left[1;1;\frac{xy}{4\left(1+\sqrt{1-x}\right)^2},\frac{y^2}{16}\right]\right),\quad |x|<1\]
as well as its vice versa formula, where $I_0(z)$ denotes the $I$-Bessel function.

Finally, we take a quick look at some physical applications of $\Phi_1$. Del Punta \emph{et al.} \cite{DP-A-G-Z-A 2014} illustrated that solutions for the pure Coulomb potential with a driving term involving Slater-type or Laguerre-type orbitals can be expressed as linear combinations of $\Phi_1$ functions. They also derived asymptotic expansions of these solutions via a formal analysis of the large-$y$ behavior of $\Phi_1$. Kashyap \emph{et al.} \cite{Kashyap-Mondal-Sen-Verma-2015} employ $\Phi_1$ in their evaluation of the life expectancy for two comoving objects in $3+1$ dimensional de Sitter space.

Despite considerable progress, \cite{BinSaad-Hasanov-2026} and \cite{DP-A-G-Z-A 2014} appear to be the \emph{only} works that have partially explored the asymptotic behavior of $\Phi_1$ under highly restrictive conditions. Therefore, the present paper is devoted to a systematic investigation of its asymptotic expansions under various limiting regimes. Meanwhile, we demonstrate the usefulness of the obtained asymptotic results in the theory of Saran's function $F_M$, the Glauber-Ising model, and Prabhakar-type fractional integral operators.

The remaining part of the paper is structured as follows. In Section \ref{Sect: Preliminaries}, we summarize the basic properties of $\Phi_1$, including a new reduction formula for $\Phi_1$ (see Eq. \eqref{SummationTh-Humbert}). These results are then utilized in Section \ref{Sect: Main results} to derive asymptotic expansions for $\Phi_1$ under five distinct limiting regimes. Some concrete applications of the resulting expansions are discussed in Section \ref{Sect: Applications}. Finally, Section \ref{Sect: Discussion} presents our concluding remarks and suggestions of future work.
\vspace{2mm}

\noindent\textbf{Notation.} The generalized hypergeometric function $_pF_q$ is defined by \cite[Eq. (16.2.1)]{NIST}
\begin{equation}\label{eq:pFq definition}
	{}_pF_q\left[\begin{matrix}
		a_1,\dots,a_p\\
		b_1,\dots,b_q
	\end{matrix};z\right]
	\equiv
	{}_pF_q[
	a_1,\dots,a_p;
	b_1,\dots,b_q;z]
	:=\sum_{n=0}^{\infty}\frac{(a_1)_n\cdots(a_p)_n}{(b_1)_n\cdots(b_q)_n}\frac{z^n}{n!},
\end{equation}
where $a_1,\cdots,a_p\in\CC$ and $b_1,\cdots,b_q\in\CC\sm\ZZ_{\leqslant 0}$. By $|f(z)|\lesssim |g(z)|\,(z\in\Omega)$, or $f(z)=\mo(g(z))\,(z\in\Omega)$, we mean that there is a constant $C>0$ independent of $z$ so that $|f(z)|\leqslant C|g(z)|\,(z\in\Omega)$.

\section{Preliminaries}\label{Sect: Preliminaries}

In this section, we present basic results on the Humbert function $\Phi_1$ which are useful in what follows. Let us begin with the series and integral representations in \cite{Humbert 1922}:
\begin{align}
	\Phi_1[a,b;c;x,y]& =\sum_{n=0}^{\infty}\frac{(a)_n}{(c)_n}\,_2F_1\left[\begin{matrix}
		a+n,b\\
		c+n
	\end{matrix};x\right]\frac{y^n}{n!} \label{eq:Phi_1 series with 2F1}\\
	& =\sum_{n=0}^{\infty}\frac{(a)_n(b)_n}{(c)_n}\,_1F_1\left[\begin{matrix}
		a+n\\
		c+n
	\end{matrix};y\right]\frac{x^n}{n!} \label{eq:Phi_1 series with 1F1}\\
	& =\frac{\Gamma(c)}{\Gamma(a)\Gamma(c-a)}\int_0^1 t^{a-1}\left(1-t\right)^{c-a-1}\left(1-xt\right)^{-b}\me^{yt}\md t. \label{eq:Phi_1 Euler integral}
\end{align}
These identities follow easily from \eqref{eq:Phi_1 definition}, and their convergence conditions are given below, respectively,
\begin{align*}
	\eqref{eq:Phi_1 series with 2F1}:& \quad c\notin\ZZ_{\leqslant 0},\ x\notin [1,\infty),\,y\in\CC;\\
	\eqref{eq:Phi_1 series with 1F1}:& \quad c\notin\ZZ_{\leqslant 0},\ |x|<1,\,y\in\CC;\\
	\eqref{eq:Phi_1 Euler integral}:& \quad \Re(c)>\Re(a)>0,\ x\notin [1,\infty),\,y\in\CC.
\end{align*}
It should be noted that the first condition follows by using the asymptotic expansion \cite[Eq. (15)]{Temme 2003}
\begin{equation}\label{eq:2F1 parameter asymptotics}
	{}_2F_1\left[\begin{matrix}
		\alpha,\beta+\lambda\\
		\gamma+\lambda
	\end{matrix};z\right]=\left(1-z\right)^{\alpha}\bigl(1+\mo(\lambda^{-1})\bigr),
\end{equation}
which holds for large $\lambda$ in $\left|\arg(\gamma+\lambda)\right|<\pi$ and fixed $z$ in $\left|\arg(1-z)\right|<\pi$. As a result, the series \eqref{eq:Phi_1 series with 2F1} offers an analytic continuation of $\Phi_1$ to the region
\begin{equation}\label{eq:region D}
	\DD=\left\{(x,y)\in\CC^2:x\ne 1,\ \left|\arg(1-x)\right|<\pi,\ |y|<\infty\right\}.
\end{equation}
Further results on integrals and series involving $\Phi_1$ can be found in \cite{A-D-G 2017, Brychkov-Saad 2012, Burchnall-Chaundy 1941, Choi-Hasanov 2011}.

Remarkably, equations \eqref{eq:Phi_1 series with 2F1} and \eqref{eq:Phi_1 series with 1F1} reveal the intricate structure of $\Phi_1$ as a bridge between​​ the ${}_2F_1$ and ${}_1F_1$ functions. This is further evidenced by a Kummer-type transformation of $\Phi_1$ \cite[p. 348]{Erdelyi 1940}:
\begin{equation}\label{eq:Phi_1 Kummer transformation}
	\Phi_1[a,b;c;x,y]=\me^y\left(1-x\right)^{-b}\Phi_1\left[c-a,b;c;\frac{x}{x-1},-y\right], ~~(x,y)\in\DD.
\end{equation}
Furthermore, according to \cite{Hang-Henkel-Luo 2026, Hang-Luo 2025}, the Humbert function $\Psi_1$, which is defined as
\[\Psi_1[a,b;c,c';x,y]:=\sum_{m,n=0}^{\infty}\frac{(a)_{m+n}(b)_m}{(c)_m(c')_n}\frac{x^m}{m!}\frac{y^n}{n!} ~~~(|x|<1,|y|<\infty)\]
and has an extension to $\DD$, shares an analogous structure. A profound connection thus exists between $\Phi_1$ and $\Psi_1$, ​​as illustrated by​​ two key observations: (i) $\Phi_1$ possesses the transformation formula \cite[p. 348]{Erdelyi 1940}
\begin{equation}\label{eq:formula between Phi_1 and Psi_1}
	\Phi_1[a,b;c;x,y]=\left(1-x\right)^{c-a-b}\me^{\frac{y}{x}}\Psi_1\left[c-b,c-a;c,c-b;x,\frac{x-1}{x}y\right],\quad x\ne 0,(x,y)\in\DD,
\end{equation}
and (ii) the singular behavior of $\Phi_1[x,y]$ at $x=1$ is determined by the connection formula below.

\begin{theorem}[{\cite[Eq. (32)]{Tuan-Kalla 1987}}]\label{Thm: Phi_1 at x=1}
	Assume that $a,b\in\CC,\,c\in\CC\sm\ZZ_{\leqslant 0}$ and $a+b-c\notin\ZZ$. Then for $(x,y)\in\DD$,
	\begin{equation}\label{eq:Phi_1 at x=1}
		\begin{split}
			\Phi_1[a,b;c;x,y]={}& \frac{\Gamma(c)\Gamma(c-a-b)}{\Gamma(c-a)\Gamma(c-b)}\Psi_1[a,b;a+b-c+1,c-b;1-x,y]\\
			& +\frac{\Gamma(c)\Gamma(a+b-c)}{\Gamma(a)\Gamma(b)}\left(1-x\right)^{c-a-b}\Psi_1[c-b,c-a;c-a-b+1,c-b;1-x,y].
		\end{split}
	\end{equation}
\end{theorem}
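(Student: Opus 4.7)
My plan is to derive \eqref{eq:Phi_1 at x=1} by applying the classical Gauss connection formula at argument $1$ to the series representation \eqref{eq:Phi_1 series with 2F1} term by term. The crucial structural observation is that for the inner ${}_2F_1[a+n,b;c+n;x]$ with $\alpha=a+n$, $\beta=b$, $\gamma=c+n$, the Kummer quantities $\alpha+\beta-\gamma+1=a+b-c+1$ and $\gamma-\alpha-\beta=c-a-b$ are both independent of $n$. Hence the bottom parameters of the two transformed ${}_2F_1$ factors and the exponent of $1-x$ are frozen as $n$ varies, which is exactly what is needed to later re-sum the resulting doubly-indexed expressions into two $\Psi_1$ functions. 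The hypothesis $a+b-c\notin\ZZ$ is used precisely to avoid the logarithmic degeneracy of Gauss's formula.

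Concretely, I would apply the connection formula to each summand of \eqref{eq:Phi_1 series with 2F1}, simplify the $n$-dependent gamma ratios via $\Gamma(c+n)=(c)_n\Gamma(c)$, $\Gamma(a+n)=(a)_n\Gamma(a)$ and $\Gamma(c+n-b)=(c-b)_n\Gamma(c-b)$, and find that the coefficient multiplying the first transformed ${}_2F_1$ collapses to $\frac{\Gamma(c)\Gamma(c-a-b)}{\Gamma(c-a)\Gamma(c-b)}\cdot\frac{(a)_n}{(c-b)_n}$, while that of the second reduces to the $n$-free constant $\frac{\Gamma(c)\Gamma(a+b-c)}{\Gamma(a)\Gamma(b)}(1-x)^{c-a-b}$. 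I would then expand each inner ${}_2F_1$ as an $m$-series in $1-x$, interchange the order of summation, and apply the Pochhammer identity $(\alpha)_n(\alpha+n)_m=(\alpha)_{m+n}$ with $\alpha=a$ for the first double sum and $\alpha=c-b$ for the second (after using the symmetry of ${}_2F_1$ in its top parameters to turn $(c+n-b)_m$ into $(c-b+n)_m$). Reading off the two resulting double sums against the definition of $\Psi_1$ yields precisely the two terms in \eqref{eq:Phi_1 at x=1}.

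The main technical obstacle is the justification of the summation interchange. I would first restrict parameters so that $\Re(c)>\Re(a)>0$ and work on a punctured neighborhood of $x=1$ of the form $0<|1-x|<\delta$ with $\delta$ small and $y$ bounded; there absolute convergence of the resulting double series follows by routine Pochhammer majorizations combined with the entire nature of the $y$-series. Once the identity is established on this region, analytic continuation in $x$ via the representation \eqref{eq:Phi_1 series with 2F1} (valid on $\DD$), together with the extension of $\Psi_1$ to $\DD$ recalled in the paragraph preceding the theorem, propagates \eqref{eq:Phi_1 at x=1} to all of $\DD$; the generic parameter conditions $a+b-c\notin\ZZ$ and $c\notin\ZZ_{\leqslant 0}$ keep the two gamma prefactors finite.
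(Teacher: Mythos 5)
The paper does not prove this theorem at all: it is quoted directly from Tuan and Kalla (their Eq.\ (32)), so there is no in-paper argument to compare against. Your derivation is correct and is the natural one: applying the Gauss connection formula at argument $1$ termwise to \eqref{eq:Phi_1 series with 2F1} does freeze the parameters $a+b-c+1$, $c-a-b+1$ and the exponent $c-a-b$ independently of $n$; the gamma-ratio simplifications $\Gamma(c+n)=(c)_n\Gamma(c)$, $\Gamma(a+n)=(a)_n\Gamma(a)$, $\Gamma(c-b+n)=(c-b)_n\Gamma(c-b)$ and the Pochhammer identities $(a)_n(a+n)_m=(a)_{m+n}$, $(c-b)_n(c-b+n)_m=(c-b)_{m+n}$ reproduce exactly the two $\Psi_1$ series; and the interchange of summation plus analytic continuation from $0<|1-x|<1$ to $\DD$ (using the extension of $\Psi_1$ recalled before the theorem) is a legitimate way to finish. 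The only cosmetic remark is that the restriction to $\Re(c)>\Re(a)>0$ is not actually needed for the absolute-convergence argument, and implicitly one should also keep $c-b\notin\ZZ_{\leqslant 0}$ so that both $\Psi_1$ factors are defined — a caveat inherited from the cited statement rather than a flaw in your proof.
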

The method of obtaining the asymptotic behaviour of $\Phi_1[x,y]$ near $x=1$ when $a+b-c\in\mathbb{Z}$ will be described in Subsection \ref{SubSect-Regime-v}, though only the case that $a+b-c=0$ will be discussed in detail. From \eqref{eq:Phi_1 series with 2F1} and the Gauss summation theorem \cite[Eq. (15.4.20)]{NIST}, we have immediately \cite[Eq. (3.5)]{Tremblay-Lavertu-1972}
\[
\Phi_1[a,b;c;1,y] =\frac{\Gamma(c)\Gamma(c-a-b)}{\Gamma(c-a)\Gamma(c-b)}\,{}_{1}F_{1}\left[\begin{matrix}
	a\\
	c-b
\end{matrix};y\right],\quad\Re(c-a-b)>0.
\]
In addition, by combining \eqref{eq:Phi_1 series with 2F1} with the Kummer summation theorem \cite[p. 68, Theorem 26]{Rainville 1960}, we obtain 
\begin{equation}\label{SummationTh-Humbert}
	\begin{split}
		\Phi_1[a,b;a-b+1;-1,y]={}& \frac{\Gamma(a-b+1)}{2\,\Gamma(a)}
		\left\{\vphantom{\begin{matrix}
				\frac{a}{2}\\[1ex]
				\frac{1}{2},\frac{a}{2}-b+1
		\end{matrix}}\right. \!
		\frac{\Gamma(\frac{a}{2})}{\Gamma(\frac{a}{2}-b+1)}\,_1F_2\left[\begin{matrix}
				\frac{a}{2}\\[1ex]
				\frac{1}{2},\frac{a}{2}-b+1
			\end{matrix};\frac{y^2}{4}\right]\\[1ex]
		& +\frac{\Gamma(\frac{a}{2}+\frac{1}{2})}{\Gamma(\frac{a}{2}-b+\frac{3}{2})}\,y\cdot{}_1F_2\left[\begin{matrix}
				\frac{a}{2}+\frac{1}{2}\\[1ex]
				\frac{3}{2},\frac{a}{2}-b+\frac{3}{2}
			\end{matrix};\frac{y^2}{4}\right]
		\! \left.\vphantom{\begin{matrix}
			\frac{a}{2}\\[1ex]
			\frac{1}{2},\frac{a}{2}-b+1
		\end{matrix}}\right\},
	\end{split}
\end{equation}
where $\Re(b)<1$ and $a-b+1\in\CC\sm\ZZ_{\leqslant 0}$. To the best of the authors' knowledge, the above formula has not been recorded in the literature. Since it is of independent interest, we provide a proof in the Appendix \ref{Appendix}.

Finally, we conclude with the Mellin-Barnes integral for $\Phi_1$, which is derived from the asymptotic expansion \eqref{eq:2F1 parameter asymptotics} via an argument analogous to that in \cite[Section 3.1]{Hang-Luo 2025}:
\begin{theorem}
	Let $a,b\in\CC,\,c\in\CC\sm\ZZ_{\leqslant 0},\,\delta\in\bigl(0,\frac{\pi}{2}\bigr]$ and
	\begin{equation}\label{eq:region V}
		\mathbb{V}_{\Phi_1}=\left\{(x,y)\in\CC^2:x\ne 1,\, y\ne 0,\, \left|\arg(1-x)\right|<\pi,\, \left|\arg(-y)\right| \leqslant \frac{\pi}{2}-\delta \right\}.
	\end{equation}
	Then
	\begin{align}\label{eq:Phi_1 Mellin-Barnes integral}
		\Phi_1[a,b;c;x,y] &=\frac{1}{2\pi\mi}\frac{\Gamma(c)}{\Gamma(a)}\int_{L_{\sigma}}{}_2F_1\left[\begin{matrix}
			a+s,b\\
			c+s
		\end{matrix};x\right]\frac{\Gamma(a+s)}{\Gamma(c+s)}\Gamma(-s)\left(-y\right)^s\md s,
	\end{align}
	where the path $L_{\sigma}$, starting at $\sigma-\mi\infty$ and ending at $\sigma+\mi\infty$, is a vertical line intended if necessary to separate the poles of $\Gamma(a+s)$ from the poles of $\Gamma(-s)$.
\end{theorem}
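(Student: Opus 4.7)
The plan is to derive \eqref{eq:Phi_1 Mellin-Barnes integral} by a Mellin--Barnes residue argument: close the contour $L_\sigma$ to the right, collect the residues of $\Gamma(-s)$ at the non-negative integers, and recognize the resulting series as \eqref{eq:Phi_1 series with 2F1}.

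First I would verify absolute convergence along $L_\sigma$. Writing $s=\sigma+\mi t$, Stirling's formula yields
\[
\left|\frac{\Gamma(a+s)\,\Gamma(-s)}{\Gamma(c+s)}\right|\lesssim |t|^{\Re(a-c)-\sigma-1/2}\,\me^{-\pi|t|/2}\quad (|t|\to\infty),
\]
while \eqref{eq:2F1 parameter asymptotics} shows that ${}_2F_1[a+s,b;c+s;x]$ remains uniformly bounded for fixed $x\in\DD$. Coupled with $|(-y)^s|=|y|^\sigma\me^{-t\arg(-y)}$, the integrand is majorized by a constant multiple of $|t|^{\Re(a-c)-\sigma-1/2}\me^{-\delta|t|}$ under the hypothesis $|\arg(-y)|\leqslant\pi/2-\delta$, giving absolute and uniform convergence on compact subsets of $\mathbb{V}_{\Phi_1}$.

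Next I would close $L_\sigma$ with right-half-plane semicircles $C_N$ of radius $R_N=N+\tfrac{1}{2}$, chosen so as to avoid the poles $s=0,1,2,\ldots$ of $\Gamma(-s)$. Using the reflection identity $\Gamma(-s)=-\pi/[\Gamma(1+s)\sin\pi s]$, the bound $|\sin\pi s|\gtrsim\me^{\pi|\Im s|}$ valid on $C_N$, and Stirling's estimate applied to $1/\Gamma(1+s)$, one verifies that the super-exponential decay of $1/\Gamma(1+s)$ dominates the growth of $(-y)^s$ throughout the right half-plane, while the ${}_2F_1$ factor is controlled by \eqref{eq:2F1 parameter asymptotics}. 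Hence $\int_{C_N}\to 0$ as $N\to\infty$, and the residue theorem applied to the clockwise-oriented closed contour $L_\sigma\cup C_N$ gives
\[
\frac{1}{2\pi\mi}\int_{L_\sigma}(\cdots)\,\md s \;=\; -\sum_{n=0}^{\infty}\Res_{s=n}(\cdots).
\]

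Since $\Res_{s=n}\Gamma(-s)=(-1)^{n+1}/n!$ and $(-1)^{n+1}(-y)^n=-y^n$, each residue, multiplied by $\Gamma(c)/\Gamma(a)$, simplifies to $-\tfrac{(a)_n}{(c)_n}\,{}_2F_1[a+n,b;c+n;x]\,y^n/n!$; summing and negating then reproduces \eqref{eq:Phi_1 series with 2F1}. The principal obstacle is the estimate on $C_N$: although $(-y)^s$ grows exponentially along certain rays in the right half-plane, this growth must be (and is) absorbed by $1/\Gamma(1+s)$, precisely as in \cite[Section 3.1]{Hang-Luo 2025}, to which I would defer for the detailed bookkeeping; the remaining residue calculation is then mechanical.
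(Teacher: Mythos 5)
Your proposal is correct and follows essentially the same route the paper intends: the paper gives no written-out proof but defers to the analogous argument in \cite[Section 3.1]{Hang-Luo 2025}, namely verifying absolute convergence of the integrand on $L_\sigma$ via the Stirling-type estimate for $\frac{\Gamma(a+s)}{\Gamma(c+s)}\Gamma(-s)(-y)^s$ together with the large-parameter behaviour \eqref{eq:2F1 parameter asymptotics} of the ${}_2F_1$ factor, then closing the contour to the right and summing the residues of $\Gamma(-s)$ at $s=n$ to recover the series \eqref{eq:Phi_1 series with 2F1}. Your residue bookkeeping ($\Res_{s=n}\Gamma(-s)=(-1)^{n+1}/n!$, the clockwise orientation, and the cancellation of signs) is accurate, and your deferral of the arc estimate to the cited reference matches what the authors themselves do.
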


\section{Main results}\label{Sect: Main results}

In this section, we systematically investigate the asymptotic behaviors of $\Phi_1[x,y]$ for $x$ and $y$ in five different regimes: (i) $x\to\infty$; (ii) $y\to\infty$; (iii) $x\to\infty,\,y\to\infty$; (iv) $x$ or $y$ small, $xy$ fixed; and (v) $x\to 1$, $y$ fixed. In particular, for Regimes (i)-(iv), we obtain complete asymptotic expansions.

\subsection{Regime (i): \texorpdfstring{$x\to\infty$}{}}

The following series representation illustrates the full asymptotics of $\Phi_1[x,y]$ for large $x$.
\begin{theorem}\label{Thm: Phi_1 for large x}
	Assume that $a,b\in\CC,\,c\in\CC\sm\ZZ_{\leqslant 0}$ and $a-b\in\CC\sm\ZZ$. Then
	\begin{equation}\label{eq:Phi_1 series for |x|>1}
		\begin{split}
			\Phi_1[a,b;c;x,y]={}& \frac{\Gamma(c)\Gamma(b-a)}{\Gamma(b)\Gamma(c-a)}\left(-x\right)^{-a}\sum_{k=0}^{\infty}{}_1F_1\left[\begin{matrix}
				-k\\
				c-a-k
			\end{matrix};y\right]\frac{(a)_k(a-c+1)_k}{(a-b+1)_k}
			\frac{x^{-k}}{k!}\\
			& +\frac{\Gamma(c)\Gamma(a-b)}{\Gamma(a)\Gamma(c-b)}\left(-x\right)^{-b}\sum_{k=0}^{\infty}{}_1F_1\left[\begin{matrix}
				a-b-k\\
				c-b-k
			\end{matrix};y\right]\frac{(b)_k(b-c+1)_k}{(b-a+1)_k}
			\frac{x^{-k}}{k!}
		\end{split}
	\end{equation}
	holds for $\left|\arg(-x)\right|<\pi$, $|x|>1$ and $|y|<\infty$.
\end{theorem}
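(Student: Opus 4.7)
The plan is to start from the series representation \eqref{eq:Phi_1 series with 2F1}, which already provides the analytic continuation of $\Phi_1[a,b;c;x,y]$ to $\DD$, and apply the standard Kummer connection formula for ${}_2F_1$ between $x=0$ and $x=\infty$ (e.g.\ DLMF 15.10.21) to each summand ${}_2F_1[a+n,b;c+n;x]$. The hypothesis $a-b\notin\ZZ$ ensures that $(a+n)-b\notin\ZZ$ for every $n\ge 0$, so this connection is valid term by term and yields one contribution proportional to $(-x)^{-a-n}\,{}_2F_1\bigl[a+n,a-c+1;a-b+n+1;x^{-1}\bigr]$ and another proportional to $(-x)^{-b}\,{}_2F_1\bigl[b,b-c-n+1;b-a-n+1;x^{-1}\bigr]$. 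Expanding each inner hypergeometric function as a power series in $x^{-1}$ (which converges for $|x|>1$) recasts $\Phi_1$ as $S_1+S_2$, with each piece a double series in the indices $(n,j)$. I would justify interchanging the order of summation by establishing absolute convergence on compacta of $\{\,|x|>1,\,y\in\CC\,\}$, using \eqref{eq:2F1 parameter asymptotics} and Stirling's formula to bound the growth in $n$.

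The identification with the right-hand side of \eqref{eq:Phi_1 series for |x|>1} then reduces to Pochhammer algebra. For $S_1$, I set $k=n+j$; the telescopes $(a)_n(a+n)_{k-n}=(a)_k$ and $(a-b+1)_n(a-b+n+1)_{k-n}=(a-b+1)_k$, combined with the cancellation $(-1)^n(-x)^{-n}=x^{-n}$ (which arises from $\Gamma(b-a-n)=(-1)^n\Gamma(b-a)/(a-b+1)_n$), factor out the $k$-dependence cleanly and leave the finite inner sum $\sum_{n=0}^{k}(a-c+1)_{k-n}\,y^n/[(k-n)!\,n!]$. The key step is the Pochhammer reflection
\[
(c-a-k)_n=\frac{(-1)^n\,(a-c+1)_k}{(a-c+1)_{k-n}},
\]
which rewrites this sum as $\tfrac{(a-c+1)_k}{k!}\,{}_1F_1[-k;c-a-k;y]$ and produces the first line of \eqref{eq:Phi_1 series for |x|>1}. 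For $S_2$, the factor $(-x)^{-b}$ is $n$-independent, so at fixed $k$ only the inner series contributes $x^{-k}$ and the $n$-sum remains infinite. Using the analogous identities
\[
(b-c-n+1)_k=\frac{(c-b)_n(b-c+1)_k}{(c-b-k)_n},\qquad (b-a-n+1)_k=\frac{(a-b)_n(b-a+1)_k}{(a-b-k)_n},
\]
the ratio $\tfrac{(a-b)_n(b-c-n+1)_k}{(c-b)_n(b-a-n+1)_k}$ collapses to $\tfrac{(b-c+1)_k(a-b-k)_n}{(b-a+1)_k(c-b-k)_n}$, so the remaining $n$-sum becomes exactly ${}_1F_1[a-b-k;c-b-k;y]$, which yields the second line.

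The main obstacle I anticipate is the rigorous justification of the double-sum interchange, particularly in $S_2$ where the inner sum in $n$ is non-terminating and its Pochhammer parameters depend on both indices; once that is settled, every remaining step is a routine identity between Pochhammer symbols.
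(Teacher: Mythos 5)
Your proposal is correct and follows essentially the same route as the paper: both start from the series \eqref{eq:Phi_1 series with 2F1}, apply the $0\leftrightarrow\infty$ connection formula for ${}_2F_1$ term by term (valid since $a-b\notin\ZZ$), expand the resulting Gauss functions in powers of $x^{-1}$, and regroup via the same Pochhammer reflections to obtain the two ${}_1F_1$ series; the paper's only cosmetic difference is that it first recognizes the $(-x)^{-a}$ piece as $\Phi_1[a,a-c+1;a-b+1;1/x,y/x]$ before regrouping. Your explicit attention to justifying the double-sum interchange is a point the paper passes over in silence, but it is not a divergence in method.
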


\begin{proof}
	Applying the connection formula for ${}_2F_1$ \cite[p. 63, Eq. (17)]{EMOT 1981}
	\begin{align*}
		{}_2F_1\left[\begin{matrix}
			a,b\\
			c
		\end{matrix};z\right]
		={}& \frac{\Gamma(c)\Gamma(b-a)}{\Gamma(b)\Gamma(c-a)}\left(-z\right)^{-a}{}_2F_1\left[\begin{matrix}
			1-c+a,a\\
			1-b+a
		\end{matrix};\frac{1}{z}\right]\\
		& +\frac{\Gamma(c)\Gamma(a-b)}{\Gamma(a)\Gamma(c-b)}\left(-z\right)^{-b}{}_2F_1\left[\begin{matrix}
			1-c+b,b\\
			1-a+b
		\end{matrix};\frac{1}{z}\right],\ \ \left|\arg(-z)\right|<\pi
	\end{align*}
	to \eqref{eq:Phi_1 series with 2F1}, we derive that
	\begin{equation}\label{eq:Phi_1 another series for |x|>1}
		\Phi_1[a,b;c;x,y]=\frac{\Gamma(c)\Gamma(b-a)}{\Gamma(b)\Gamma(c-a)}\left(-x\right)^{-a}U_1(x,y)+\frac{\Gamma(c)\Gamma(a-b)}{\Gamma(a)\Gamma(c-b)}\left(-x\right)^{-b}U_2(x,y),
	\end{equation}
	where
	\begin{alignat*}{2}
		U_1(x,y)& =\sum_{n=0}^{\infty}\frac{(a)_n}{(a-b+1)_n\, n!}\,_2F_1\left[\begin{matrix}
			a-c+1,a+n\\
			a-b+1+n
		\end{matrix};\frac{1}{x}\right]\left(\frac{y}{x}\right)^n &~~~(|x|>1,|y|<\infty),\\
		U_2(x,y)& =\sum_{n=0}^{\infty}\frac{(a-b)_n}{(c-b)_n}\,_2F_1\left[\begin{matrix}
			b,b-c+1-n\\
			b-a+1-n
		\end{matrix};\frac{1}{x}\right]\frac{y^n}{n!} &~~~(|x|>1,|y|<\infty).
	\end{alignat*}
	
	Further calculation gives
	\begin{align}
		U_1(x,y)&
		=\Phi_1\left[a,a-c+1;a-b+1;\frac{1}{x},\frac{y}{x}\right] \notag\\
		& =\sum_{m,n=0}^{\infty}\frac{(a)_{m+n}(a-c+1)_m}{(a-b+1)_{m+n}m!\,n!}\frac{y^n}{x^{m+n}} \label{eq:U_1(x,y) double series}\\
		& =\sum_{k=0}^{\infty}\frac{(a)_k}{(a-b+1)_k}\frac{x^{-k}}{k!}\sum_{n=0}^{k}(-k)_n(a-c+1)_{k-n}\frac{(-y)^n}{n!} \notag\\
		& =\sum_{k=0}^{\infty}{}_1F_1\left[\begin{matrix}
			-k\\
			c-a-k
		\end{matrix};y\right]\frac{(a)_k(a-c+1)_k}{(a-b+1)_k}
		\frac{x^{-k}}{k!} \notag
	\end{align}
	and
	\begin{align}
		U_2(x,y)& =\sum_{m,n=0}^{\infty}\frac{(b)_m(b-c+1-n)_m(a-b)_n}{(b-a+1-n)_m\left(c-b\right)_n m!\,n!}\frac{y^n}{x^m} \notag\\
		& =\sum_{m,n=0}^{\infty}\frac{(b)_m(b-c+1)_{m-n}}{(b-a+1)_{m-n}m!\,n!}\frac{y^n}{x^m} \label{eq:U_2(x,y) double series}\\
		& =\sum_{m,n=0}^{\infty}\frac{(b)_m(b-c+1)_m}{(b-a+1)_m}\frac{x^{-m}}{m!}\cdot\frac{(a-b-m)_n}{(c-b-m)_n}\frac{y^n}{n!} \notag\\
		& =\sum_{m=0}^{\infty}{}_1F_1\left[\begin{matrix}
			a-b-m\\
			c-b-m
		\end{matrix};y\right]\frac{(b)_m(b-c+1)_m}{(b-a+1)_m}
		\frac{x^{-m}}{m!}. \notag
	\end{align}
	A combination of the above formulas gives \eqref{eq:Phi_1 series for |x|>1}.
\end{proof}

\begin{remark}
	~
	\begin{itemize}
		\item[{\rm (1)}] If $k\in\ZZ_{\geqslant 0}$, by appealing to {\rm\cite[Eq. (13.2.5)]{NIST}}, the ratio
		\[\frac{(1-b)_k}{\Gamma(b)}\,_1F_1\left[\begin{matrix}
			a\\
			b-k
		\end{matrix};z\right]=\frac{\left(-1\right)^k}{\Gamma(b-k)}\,_1F_1\left[\begin{matrix}
			a\\
			b-k
		\end{matrix};z\right]\]
		is an analytic function of $b\in\CC$. This guarantees the validity of \eqref{eq:Phi_1 series for |x|>1} when $c-a\in\ZZ$ or $c-b\in\ZZ$.
		\item[{\rm (2)}] It is worth mentioning that $U_2(x, y)$ can be expressed in terms of the confluent Horn function $\Gamma_1$ defined by {\rm\cite[p. 226, Eq. (27)]{EMOT 1981}}
		\[
		\Gamma_1[a,b,b';x,y]:=\sum_{m,n=0}^{\infty}(a)_m(b)_{n-m}(b')_{m-n}\frac{x^m}{m!}\frac{y^n}{n!},\quad |x|<1,|y|<\infty.
		\]
		Actually, from \eqref{eq:U_2(x,y) double series}, we have 
		\begin{align*}
			U_{2}(x,y)
			&=\sum_{m,n=0}^{\infty}(b)_m(b-c+1)_{m-n}(a-b)_{n-m}
			(-1)^{n-m}\frac{x^{-m}}{m!}\frac{y^n}{n!}\\
			&=\Gamma_1\left[b,a-b,b-c+1;-\frac{1}{x},-y\right].
		\end{align*}
		Furthermore, after completing this paper, we noticed that Bin-Saad and Hasanov {\rm \cite[p. 4, Eq. (2.10)]{BinSaad-Hasanov-2026}} also presented an equivalent form of Eq. \eqref{eq:Phi_1 series for |x|>1} in their latest 2026 paper {\rm (}they expressed the right-hand side of Eq. \eqref{eq:Phi_1 series for |x|>1} using $\Phi_1$ and $\Gamma_1${\rm )}. However, their paper does not provide a detailed proof.
	\end{itemize} 
\end{remark}

\subsection{Regime (ii): \texorpdfstring{$y\to\infty$}{}}\label{Sect: Regime (ii)}

Here we examine in detail the asymptotic behavior of $\Phi_1[x,y]$ as $y\to\infty$. Let us first consider the case of $y$ in the left half-plane.

\begin{theorem}
	Let $a,b\in\CC$ and $c,c-a\in\CC\sm\ZZ_{\leqslant 0}$. When $(x,y)\in\mathbb{V}_{\Phi_1}$ and $y\to\infty$, we have
	\begin{align}\label{eq:Phi_1 for y in left half-plane}
		\Phi_1[a,b;c;x,y] &\sim\frac{\Gamma(c)}{\Gamma(c-a)}\sum_{n=0}^{\infty}{}_2F_1\left[\begin{matrix}
			-n,b\\
			c-a-n
		\end{matrix};x\right]\frac{(a)_n(a-c+1)_n}{n!}\left(-y\right)^{-a-n}.
	\end{align}
\end{theorem}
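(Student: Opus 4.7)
My plan is to derive the expansion directly from the Mellin-Barnes representation \eqref{eq:Phi_1 Mellin-Barnes integral}. The integrand has two sets of real-axis poles: those of $\Gamma(-s)$ at $s=k\in\ZZ_{\geqslant 0}$, which lie to the right of $L_\sigma$ and produce the Taylor expansion in $y$ near the origin; and those of $\Gamma(a+s)$ at $s=-a-n$ for $n\in\ZZ_{\geqslant 0}$, which lie to the left and drive the large-$|y|$ behavior. The first step is to push the contour $L_\sigma$ leftwards past the first $N$ poles of $\Gamma(a+s)$ onto a vertical line $L_{\sigma_N}$ with $\sigma_N\in\bigl(-\Re(a)-N,-\Re(a)-N+1\bigr)$ (chosen to avoid all poles). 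The residue theorem then gives
\[\Phi_1[a,b;c;x,y]=\sum_{n=0}^{N-1}\Res_{s=-a-n}\!\left\{\frac{\Gamma(c)}{\Gamma(a)}\,{}_2F_1\!\left[\begin{matrix}a+s,b\\c+s\end{matrix};x\right]\!\frac{\Gamma(a+s)\Gamma(-s)}{\Gamma(c+s)}\left(-y\right)^s\right\}+R_N(x,y),\]
where $R_N$ is the remainder integral along $L_{\sigma_N}$.

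The second step is to evaluate each residue. Since $\Res_{s=-a-n}\Gamma(a+s)=(-1)^n/n!$, substituting $s=-a-n$ into the remaining factors produces
\[\frac{\Gamma(c)\,(-1)^n\,\Gamma(a+n)}{n!\,\Gamma(a)\,\Gamma(c-a-n)}\,{}_2F_1\!\left[\begin{matrix}-n,b\\c-a-n\end{matrix};x\right]\left(-y\right)^{-a-n}.\]
The gamma identity $(c-a-n)_n=(-1)^n(a-c+1)_n$ rewrites $(-1)^n/\Gamma(c-a-n)$ as $(a-c+1)_n/\Gamma(c-a)$, and combined with $\Gamma(a+n)/\Gamma(a)=(a)_n$ this reproduces exactly the $n$th term on the right-hand side of \eqref{eq:Phi_1 for y in left half-plane}.

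The third step is to control the remainder: it must be shown that $|R_N(x,y)|\lesssim |{-}y|^{\sigma_N}$ as $|y|\to\infty$ in $\mathbb{V}_{\Phi_1}$, which is of strictly smaller order than the last retained term $(-y)^{-a-(N-1)}$ since $\sigma_N<-\Re(a)-(N-1)$. Three ingredients combine along $L_{\sigma_N}$: (a) Stirling's formula shows that $\Gamma(a+s)\Gamma(-s)/\Gamma(c+s)$ is bounded by a polynomial in $|\Im s|$ times $\me^{-\pi|\Im s|/2}$; (b) by \eqref{eq:2F1 parameter asymptotics}, ${}_2F_1[a+s,b;c+s;x]$ stays bounded along the contour for $x\in\CC\sm[1,\infty)$; (c) $\left|(-y)^s\right|=|{-}y|^{\sigma_N}\me^{-(\Im s)\arg(-y)}$, and the hypothesis $|\arg(-y)|\leqslant\frac{\pi}{2}-\delta$ ensures that the exponentials from (a) and (c) combine into net decay like $\me^{-\delta|\Im s|}$. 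Together these yield absolute convergence of the remainder integral and the required bound.

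The main obstacle I anticipate is the uniformity required in ingredient (b): \eqref{eq:2F1 parameter asymptotics} is commonly stated for $\lambda$ tending to infinity along one direction, whereas here one needs uniformity in $\Im s$ along the full vertical contour. A clean workaround is to apply the Euler transformation
\[{}_2F_1\!\left[\begin{matrix}a+s,b\\c+s\end{matrix};x\right]=(1-x)^{c-a-b}\,{}_2F_1\!\left[\begin{matrix}c-a,c+s-b\\c+s\end{matrix};x\right],\]
where the large parameter $c+s$ appears in the configuration matching \eqref{eq:2F1 parameter asymptotics}, and then invoke that asymptotic for $|c+s|\to\infty$ in the half-plane $|\arg(c+s)|<\pi$, which covers every direction relevant to $L_{\sigma_N}$.
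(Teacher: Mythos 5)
Your proposal is correct and follows essentially the same route as the paper: shift the Mellin--Barnes contour \eqref{eq:Phi_1 Mellin-Barnes integral} leftward past the poles of $\Gamma(a+s)$ at $s=-a-n$, evaluate the residues (your simplification via $(c-a-n)_n=(-1)^n(a-c+1)_n$ matches the stated coefficients exactly), and bound the remainder integral using the decay $\me^{-(\pi/2\pm\arg(-y))|\Im s|}$ guaranteed by $|\arg(-y)|\leqslant\frac{\pi}{2}-\delta$ in $\mathbb{V}_{\Phi_1}$. Your extra care about uniformity of \eqref{eq:2F1 parameter asymptotics} along the vertical contour is a reasonable refinement, though the cited estimate already applies there since $|\arg(c+s)|$ stays bounded away from $\pi$ on $L_{\sigma_N}$.
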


\begin{proof}
	Denote the integrand in \eqref{eq:Phi_1 Mellin-Barnes integral} by
	\begin{equation}\label{eq:Phi(s) definition}
		\Phi(s):={}_2F_1\left[\begin{matrix}
			a+s,b\\
			c+s
		\end{matrix};x\right]\frac{\Gamma(a+s)}{\Gamma(c+s)}\Gamma(-s)\left(-y\right)^s.
	\end{equation}
	Applying \eqref{eq:2F1 parameter asymptotics} and the estimate \cite[Eq. (3.4)]{Hang-Luo 2025}: as $t\to\pm \infty$,
	\[\left|\frac{\Gamma(a+\sigma+\mi t)}{\Gamma(c+\sigma+\mi t)}\Gamma(-\sigma-\mi t)\left(-y\right)^{\sigma+\mi t}\right|
	=\mo\left(\left|y\right|^{\sigma}\left|t\right|^{\Re(a-c)-\sigma-\frac{1}{2}}\me^{-\left(\frac{\pi}{2}\pm\arg(-y)\right)|t|}\right),\]
	we obtain that as $t\to\pm \infty$,
	\begin{equation}\label{eq:Phi(s) estimate}
		\Phi(s)=\mo\left(\left|y\right|^{\sigma}\left|t\right|^{\Re(a-c)-\sigma-\frac{1}{2}}\me^{-\left(\frac{\pi}{2}\pm\arg(-y)\right)|t|}\right).
	\end{equation}
	This permits us to shift the integration path $L_{\sigma}$ to the left, as the integrals over the horizontal segments closing the contour vanish. Take an integer $M\geqslant\max\{1,\Re(-a)\}$ and let $C_M$ be the vertical line $\Re(s)=\Re(-a)-M-\frac{1}{2}$. Note that the only poles of $\Phi(s)$ between $L_{\sigma}$ and $C_M$ are at $s=-a-n\,(n\in\ZZ_{\geqslant 0})$. Thus
	\[\Phi_1[a,b;c;x,y]=\sum_{n=0}^M\Res_{s=-a-n}\Phi(s)+\frac{1}{2\pi\mi}\frac{\Gamma(c)}{\Gamma(a)}\int_{C_M}\Phi(s)\md s.\]
	
	From the definition of $\Phi(s)$ (see \eqref{eq:Phi(s) definition}), we get
	\[\Res_{s=-a-n}\Phi(s)
	={}_2F_1\left[\begin{matrix}
		-n,b\\
		c-a-n
	\end{matrix};x\right]\frac{\Gamma(a+n)}{\Gamma(c-a-n)}\frac{(-1)^n}{n!}\left(-y\right)^{-a-n},\]
	and in accordance with \eqref{eq:Phi(s) estimate}, we have
	\[\int_{C_M}\Phi(s)\md s=\mo\left(\left|y\right|^{-\Re(a)-M-\frac{1}{2}}\right).\]
	Combining the above results gives the desired asymptotic expansion \eqref{eq:Phi_1 for y in left half-plane}.
\end{proof}

\begin{remark}
	If $b\in\CC,\,c\in\CC\sm\ZZ_{\leqslant 0}$ and $a=c+m\,(m\in\ZZ_{\geqslant 0})$, applying the Kummer transformation \eqref{eq:Phi_1 Kummer transformation} to \eqref{eq:Phi_1 series with 2F1} yields the reduction formula
	\[\Phi_1[c+m,b;c;x,y]=\me^y\left(1-x\right)^{-b}\sum_{n=0}^m{}_2F_1\left[\begin{matrix}
		n-m,b\\
		c+n
	\end{matrix};\frac{x}{x-1}\right]\binom{m}{n}\frac{y^n}{(c)_n}.\]
	Then the Pfaff transformation {\rm\cite[Eq. (15.8.1)]{NIST}}
	\begin{equation}\label{eq:Pfaff transformation}
		{}_2F_1\left[\begin{matrix}
			a,b\\
			c
		\end{matrix};x\right]=\left(1-x\right)^{-b}{}_2F_1\left[\begin{matrix}
			c-a,b\\
			c
		\end{matrix};\frac{x}{x-1}\right],\quad \left|\arg(1-x)\right|<\pi
	\end{equation}
	implies a simpler expression
	\begin{equation}\label{eq:Phi_1 reduction formula}
		\Phi_1[c+m,b;c;x,y]=\me^y\sum_{n=0}^m{}_2F_1\left[\begin{matrix}
			c+m,b\\
			c+n
		\end{matrix};x\right]\binom{m}{n}\frac{y^n}{(c)_n},\quad (x,y)\in\DD.
	\end{equation}
\end{remark}

Asymptotics of $\Phi_1[x,y]$ as $y\to\infty$ in the right half-plane then follows from \eqref{eq:Phi_1 for y in left half-plane} and \eqref{eq:Phi_1 Kummer transformation}.

\begin{corollary}
	Assume that $b\in\CC$ and $a,c\in\CC\sm\ZZ_{\leqslant 0}$. When $(x,-y)\in\mathbb{V}_{\Phi_1}$ and $y\to\infty$, we have
	\begin{equation}\label{eq:Phi_1 for y in right half-plane}
		\Phi_1[a,b;c;x,y]\sim \frac{\Gamma(c)}{\Gamma(a)}\,\me^y\sum_{n=0}^{\infty}{}_2F_1\left[\begin{matrix}
			a,b\\
			a-n
		\end{matrix};x\right]\frac{(1-a)_n(c-a)_n}{n!}y^{a-c-n}.
	\end{equation}
	In particular, 
	\[
	\Phi_1[a,b;c;x,y]\sim \frac{\Gamma(c)}{\Gamma(a)}\left(1-x\right)^{-b}y^{a-c}\,\me^y.
	\]
\end{corollary}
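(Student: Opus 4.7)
The plan is to reduce the right-half-plane asymptotics to the already-established left-half-plane expansion \eqref{eq:Phi_1 for y in left half-plane} by means of the Kummer-type transformation \eqref{eq:Phi_1 Kummer transformation}, followed by a cosmetic Pfaff-type cleanup using \eqref{eq:Pfaff transformation}. Concretely, I would first write
\[
\Phi_1[a,b;c;x,y]=\me^y\left(1-x\right)^{-b}\Phi_1\!\left[c-a,b;c;\frac{x}{x-1},-y\right],
\]
and then apply \eqref{eq:Phi_1 for y in left half-plane} to the $\Phi_1$ on the right with the parameter substitution $a \rightsquigarrow c-a$ and variables $x \rightsquigarrow x/(x-1)$, $y \rightsquigarrow -y$.

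Before doing so I would check the hypotheses: under $a\rightsquigarrow c-a$, the condition ``$c,\,c-a\in\CC\sm\ZZ_{\leqslant 0}$'' of the preceding theorem becomes ``$c,\,a\in\CC\sm\ZZ_{\leqslant 0}$'', which matches the corollary's hypothesis. The sector condition $(x,-y)\in\mathbb{V}_{\Phi_1}$ from the corollary is exactly what is needed, because $\arg(-(-y))=\arg y$, and the requirement $|\arg(1-x/(x-1))|=|\arg(-1/(x-1))|<\pi$ is automatic on $\DD$. Substituting into \eqref{eq:Phi_1 for y in left half-plane} yields
\[
\Phi_1[a,b;c;x,y]\sim \me^y\left(1-x\right)^{-b}\frac{\Gamma(c)}{\Gamma(a)}\sum_{n=0}^{\infty}{}_2F_1\!\left[\begin{matrix}-n,b\\ a-n\end{matrix};\frac{x}{x-1}\right]\frac{(c-a)_n(1-a)_n}{n!}\,y^{a-c-n},
\]
where the $(1-a)_n$ factor arises from $(c-a-c+1)_n$ in \eqref{eq:Phi_1 for y in left half-plane}.

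Finally, I would invoke the Pfaff transformation \eqref{eq:Pfaff transformation} in the form
\[
\left(1-x\right)^{-b}\,{}_2F_1\!\left[\begin{matrix}-n,b\\ a-n\end{matrix};\frac{x}{x-1}\right]={}_2F_1\!\left[\begin{matrix}a,b\\ a-n\end{matrix};x\right],
\]
which absorbs the prefactor $(1-x)^{-b}$ into each term of the sum and produces the expansion stated in the corollary. The leading-order assertion then follows at $n=0$ from the binomial identity ${}_2F_1[a,b;a;x]=(1-x)^{-b}$.

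I do not expect any serious obstacle: every step is either a direct substitution or an appeal to a transformation already listed in the text. The only delicate point is to confirm that the sector condition translates cleanly under the Kummer map, which is immediate from the definition of $\mathbb{V}_{\Phi_1}$ in \eqref{eq:region V}.
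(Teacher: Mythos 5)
Your proposal is correct and follows exactly the route the paper intends: the text derives the corollary from the Kummer-type transformation \eqref{eq:Phi_1 Kummer transformation} combined with the left-half-plane expansion \eqref{eq:Phi_1 for y in left half-plane}, and your substitution $a\rightsquigarrow c-a$, $x\rightsquigarrow x/(x-1)$, $y\rightsquigarrow -y$ together with the Pfaff cleanup \eqref{eq:Pfaff transformation} supplies precisely the details the paper omits. The hypothesis and sector checks you perform are also the right ones and all go through.
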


\begin{remark}
	If $a=-m\,(m\in\ZZ_{\geqslant 0}),\,b\in\CC$ and $c\in\CC\sm\ZZ_{\leqslant 0}$, the series in \eqref{eq:Phi_1 series with 2F1} terminates and thus yields the reduction formula
	\begin{equation}
		\Phi_1[-m,b;c;x,y]=\sum_{n=0}^m {}_2F_1\left[\begin{matrix}
			n-m,b\\
			c+n
		\end{matrix};x\right]\binom{m}{n}\frac{\left(-y\right)^n}{(c)_n},\quad (x,y)\in\DD.
	\end{equation}
\end{remark}

Next, we discuss the interesting case of $y\to\infty$ along the imaginary axis.

\begin{theorem}\label{Thm: Psi_1 for imaginary y}
	Assume that $b\in\CC,\,\Re(c)>\Re(a)>0$, and $(x,\mi\lambda)\in\DD$ with $\lambda\in\RR$. Then as $|\lambda|\to\infty$,
	\begin{equation}\label{eq:Phi_1 for imaginary y-1}
		\begin{split}
			\Phi_1[a,b;c;x,\mi\lambda]={} & \frac{\Gamma(c)}{\Gamma(c-a)}\sum_{n=0}^{N-1}{}_2F_1\left[\begin{matrix}
				-n,b\\
				c-a-n
			\end{matrix};x\right]\frac{(a)_n(a-c+1)_n}{n!}\left(-\mi\lambda\right)^{-a-n}\\
			& +\frac{\Gamma(c)}{\Gamma(a)}\,\me^{\mi\lambda}\sum_{n=0}^{N-1}{}_2F_1\left[\begin{matrix}
				a,b\\
				a-n
			\end{matrix};x\right]\frac{(1-a)_n(c-a)_n}{n!}\left(\mi\lambda\right)^{a-c-n}+\mo\bigl(\left|\lambda\right|^{-N}\bigr),
		\end{split}
	\end{equation}
	where $N$ is an arbitrary positive integer. In particular,
	\begin{equation}\label{eq:Phi_1 for imaginary y-2}
		\Phi_1[a,b;c;x,\mi\lambda]\sim \frac{\Gamma(c)}{\Gamma(c-a)}\left(-\mi\lambda\right)^{-a}+\frac{\Gamma(c)}{\Gamma(a)}\left(1-x\right)^{-b}\left(\mi\lambda\right)^{a-c}\me^{\mi\lambda}.
	\end{equation}
\end{theorem}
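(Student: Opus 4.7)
Since $y=\mi\lambda$ sits on the boundary of both sectors used in \eqref{eq:Phi_1 for y in left half-plane} and \eqref{eq:Phi_1 for y in right half-plane}---and the Mellin--Barnes representation \eqref{eq:Phi_1 Mellin-Barnes integral} diverges there as well---neither previous result applies directly. My plan is instead to start from the Euler integral \eqref{eq:Phi_1 Euler integral}, whose hypothesis $\Re(c)>\Re(a)>0$ is exactly the one assumed here, and analyse
\[
I(\lambda):=\int_{0}^{1} t^{a-1}(1-t)^{c-a-1}(1-xt)^{-b}\me^{\mi\lambda t}\md t
\]
as $|\lambda|\to\infty$ by Erd\'elyi's method for generalised Fourier integrals. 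Because the integrand has algebraic endpoint singularities of exponents $a-1$ at $t=0$ and $c-a-1$ at $t=1$, the asymptotics receives one contribution from each endpoint, producing exactly the two series in \eqref{eq:Phi_1 for imaginary y-1}.

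Concretely, I would introduce a smooth partition of unity $1=\chi_0(t)+\chi_1(t)$ on $[0,1]$ with $\chi_0$ equal to $1$ near $t=0$ and $\chi_1$ equal to $1$ near $t=1$. For the $\chi_0$-piece, rotate the contour into the half-plane where $\me^{\mi\lambda t}$ decays (upper if $\lambda>0$, lower if $\lambda<0$), expand $(1-t)^{c-a-1}(1-xt)^{-b}$ in its Taylor series at $t=0$, and integrate term by term using $\int_{0}^{\infty}t^{a-1+n}\me^{\mi\lambda t}\md t=\Gamma(a+n)(-\mi\lambda)^{-a-n}$ (itself established by the same rotation). This produces
\[
\sum_{n=0}^{N-1}\Gamma(a+n)\,\alpha_n\,(-\mi\lambda)^{-a-n}+\mo\bigl(|\lambda|^{-\Re(a)-N}\bigr),
\]
where $\alpha_n$ is the $n$-th Taylor coefficient of $(1-t)^{c-a-1}(1-xt)^{-b}$ at $t=0$. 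The $\chi_1$-piece is treated identically after the substitution $u=1-t$, which converts it into $\me^{\mi\lambda}\int_0^{\cdot} u^{c-a-1}\widetilde{h}(u)\me^{-\mi\lambda u}\md u$ with $\widetilde{h}(u)=(1-u)^{a-1}(1-x+xu)^{-b}$, and yields
\[
\me^{\mi\lambda}\sum_{n=0}^{N-1}\Gamma(c-a+n)\,\widetilde{\beta}_n\,(\mi\lambda)^{-(c-a)-n}+\mo\bigl(|\lambda|^{-\Re(c-a)-N}\bigr),
\]
where $\widetilde{\beta}_n$ is the $n$-th Taylor coefficient of $\widetilde{h}$ at $u=0$.

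The remaining step is the identification of these Taylor coefficients with the ${}_2F_1$ values appearing in the theorem. Expanding both factors of $(1-t)^{c-a-1}(1-xt)^{-b}$ by the binomial series, collecting the coefficient of $t^n$, and applying the elementary Pochhammer identity $(a-c+1)_{n-k}=(-1)^k(a-c+1)_n/(c-a-n)_k$ yields
\[
\alpha_n=\frac{(a-c+1)_n}{n!}\,{}_2F_1\!\left[\begin{matrix}-n,b\\c-a-n\end{matrix};x\right].
\]
A parallel expansion of $\widetilde{h}(u)$ at $u=0$ produces a ${}_2F_1$ with argument $x/(x-1)$, which is converted back into one in $x$ by a single application of the Pfaff transformation \eqref{eq:Pfaff transformation}, giving
\[
\widetilde{\beta}_n=\frac{(1-a)_n}{n!}\,{}_2F_1\!\left[\begin{matrix}a,b\\a-n\end{matrix};x\right].
\]
Substituting these back, together with $\Gamma(a+n)=\Gamma(a)(a)_n$ and $\Gamma(c-a+n)=\Gamma(c-a)(c-a)_n$, and multiplying by the prefactor $\Gamma(c)/[\Gamma(a)\Gamma(c-a)]$, recovers \eqref{eq:Phi_1 for imaginary y-1} term by term. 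The special case \eqref{eq:Phi_1 for imaginary y-2} is the pair of $n=0$ terms after noting ${}_2F_1[0,b;c-a;x]=1$ and ${}_2F_1[a,b;a;x]=(1-x)^{-b}$.

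The main obstacle I anticipate is justifying the uniform $\mo(|\lambda|^{-N})$ remainder with complex exponents $a,c-a$. The standard strategy after the contour rotation through some small angle $\theta$ is to bound $|\me^{\mi\lambda t}|\leqslant\me^{-|\lambda|t\sin\theta}$ on the rotated ray, so that truncating the Taylor series of the smooth factor at order $N$ produces a remainder dominated by the convergent integral $\int_0^\infty t^{\Re(a)+N-1}\me^{-|\lambda|t\sin\theta}\md t=\mo(|\lambda|^{-\Re(a)-N})$; the cut-offs $\chi_0,\chi_1$ ensure that the two pieces interact only through smooth boundary contributions that are exponentially small in $|\lambda|$. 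The simultaneous presence of both endpoint series is the analytic manifestation of the fact that $y=\mi\lambda$ lies precisely on the Stokes line between the two sectors in which \eqref{eq:Phi_1 for y in left half-plane} and \eqref{eq:Phi_1 for y in right half-plane} hold individually.
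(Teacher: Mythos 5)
Your proposal is correct and follows essentially the same route as the paper: the authors likewise start from the Euler integral \eqref{eq:Phi_1 Euler integral} and invoke Erd\'elyi's theorem on Fourier integrals with algebraic endpoint singularities (which you re-derive via neutralizers and contour rotation), then identify the endpoint Taylor coefficients with the ${}_2F_1$ values --- they do this via a product-of-hypergeometric-series expansion and the Pfaff transformation, whereas you use a direct binomial expansion, but the resulting coefficients $\alpha_n$ and $\widetilde{\beta}_n$ agree with theirs. No gaps.
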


\begin{proof}
	Assume that $\Re(c)>\Re(a)>0$ and $\lambda\in\RR$. Then for $(x,\mi\lambda)\in\DD$, the integeal \eqref{eq:Phi_1 Euler integral} reads
	\[\Phi_1[a,b;c;x,\mi\lambda]=\frac{\Gamma(c)}{\Gamma(a)\Gamma(c-a)}\int_0^1 t^{a-1}\left(1-t\right)^{c-a-1}\left(1-xt\right)^{-b}\me^{\mi\lambda t}\md t.\]
	Denote the integral on the right by $I(\lambda)$. It follows from \cite[Theorem 3]{Erdelyi 1955} that as $|\lambda|\to\infty$,
	\[
	I(\lambda)=A_N(\lambda)+B_N(\lambda)+\mo\bigl(\left|\lambda\right|^{-N}\bigr),
	\]
	where $N$ is an arbitrary positive integer, and
	\begin{align}
		A_N(\lambda)& =\sum_{n=0}^{N-1}\frac{\Gamma(n+a)}{n!\,\lambda^{n+a}}\,\me^{\frac{\pi}{2}\mi(n+a)}\left[\frac{\md^n}{\md t^n}\left(1-t\right)^{c-a-1}\left(1-xt\right)^{-b}\right]_{t=0}, \label{eq:A_N(lambda)}\\
		B_N(\lambda)& =\sum_{n=0}^{N-1}\frac{\Gamma(n+c-a)}{n!\,\lambda^{n+c-a}}\,\me^{\mi(\frac{\pi}{2}(n+a-c)+\lambda)}\left[\frac{\md^n}{\md t^n}t^{a-1}\left(1-xt\right)^{-b}\right]_{t=1}. \label{eq:B_N(lambda)}
	\end{align}
	
	Recall the expansion of product of two Gauss hypergeometric functions \cite[p. 187, Eq. (14)]{EMOT 1981}
	\begin{equation}\label{eq:2F1 product}
		{}_2F_1\left[\begin{matrix}
			a,b\\
			c
		\end{matrix};pz\right]{}_2F_1\left[\begin{matrix}
			a',b'\\
			c'
		\end{matrix};qz\right]=\sum_{n=0}^{\infty}\frac{(a)_n(b)_n}{(c)_n}\frac{(pz)^n}{n!}\,_4F_3\left[\begin{matrix}
			a',b',1-c-n,-n\\
			c',1-a-n,1-b-n
		\end{matrix};\frac{q}{p}\right].
	\end{equation}
	Using the identity ${}_1F_0[a;-;z]=\left(1-z\right)^{-a}$ and taking $\frac{b}{c}=\frac{b'}{c'}=1$, we have
	\begin{equation}\label{eq:Taylor series-1}
		\left(1-pz\right)^{-a}
		\left(1-qz\right)^{-a'}=\sum_{n=0}^{\infty}\frac{(a)_n(pz)^n}{n!}\,_2F_1\left[\begin{matrix}
			-n,a'\\
			1-a-n
		\end{matrix};\frac{q}{p}\right],
	\end{equation}
	which gives the following exact values
	\begin{align}
		\left[\frac{\md^n}{\md t^n}\left(1-t\right)^{c-a-1}\left(1-xt\right)^{-b}\right]_{t=0}& =(a-c+1)_n\cdot{}_2F_1\left[\begin{matrix}
			-n,b\\
			c-a-n
		\end{matrix};x\right], \label{eq:derivative-1}\\
		\left[\frac{\md^n}{\md t^n}t^{a-1}\left(1-xt\right)^{-b}\right]_{t=1}& =\left(-1\right)^n(1-a)_n\cdot{}_2F_1\left[\begin{matrix}
			a,b\\
			a-n
		\end{matrix};x\right]. \label{eq:derivative-2}
	\end{align}
	Indeed, when deriving the second identity, we set $u=1-t$ and used \eqref{eq:Pfaff transformation}.
	
	With the above results, the proof is completed by substituting \eqref{eq:derivative-1} and \eqref{eq:derivative-2} into \eqref{eq:A_N(lambda)} and \eqref{eq:B_N(lambda)}, respectively, and then performing a simple simplification.
\end{proof}

Finally, we note that the coefficients in \eqref{eq:Taylor series-1}
\begin{align*}
	g_n^{(a,a')}(p,q)& :=\frac{(a)_n}{n!}p^n\cdot{}_2F_1\left[\begin{matrix}
		-n,a'\\
		1-a-n
	\end{matrix};\frac{q}{p}\right]\\
	& =\sum_{r=0}^{n}\frac{(a)_r(a')_{n-r}}{r!(n-r)!}p^r q^{n-r}
\end{align*}
are the (two-variable) \emph{Lagrange polynomials} (see \cite[p. 267]{EMOT 1981b} and \cite[p. 139]{Chan-Chyan-Srivastava-2001}). Moreover, \eqref{eq:Taylor series-1} has the following extension: for $|xt|<1$, $|yt|<1$ and $z\in\CC$,
\begin{equation}\label{eq:Taylor series-2}
	\left(1-xt\right)^{-\alpha}\left(1-yt\right)^{-\beta}\me^{zt}=\sum_{n=0}^{\infty}\frac{z^n}{n!}F_{0:0;0}^{1:1;1}\left[\begin{matrix}
		-n:& \hspace{-2.5mm}\alpha;& \hspace{-2.5mm}\beta\\
		\rule[0.8mm]{1.4em}{0.5pt}:& \hspace{-2.5mm}-;& \hspace{-2.5mm}-
	\end{matrix};-\frac{x}{z},-\frac{y}{z}\right]t^n,
\end{equation}
where the coefficient
\[F_{0:0;0}^{1:1;1}\left[\begin{matrix}
	-n:& \hspace{-2.5mm}\alpha;& \hspace{-2.5mm}\beta\\
	\rule[0.8mm]{1.4em}{0.5pt}:& \hspace{-2.5mm}-;& \hspace{-2.5mm}-
\end{matrix};u,v\right]=\sum_{r=0}^{n}\sum_{s=0}^{n-r}(-n)_{r+s}(\alpha)_r(\beta)_s\frac{u^r}{r!}\frac{v^s}{s!}\]
is a special case of the Kamp\'{e} de F\'{e}riet function \cite[p. 27]{Srivastava-Karlsson-Book-1985}. We may also regard the formula \eqref{eq:Taylor series-2} as a confluent form of the generating function for the three-variable Lagrange polynomials \cite[p. 140, Eq. (4)]{Chan-Chyan-Srivastava-2001}.

Using \cite[Theorem 3]{Erdelyi 1955} and \eqref{eq:Taylor series-2}, we can generalize Theorem \ref{Thm: Psi_1 for imaginary y} as follows:

\begin{theorem}
	Suppose that $b,y\in\CC,\,\Re(c)>\Re(a)>0,\,\left|\arg(1-x)\right|<\pi$ and $\lambda\in\RR$. Then as $|\lambda|\to\infty$,
	\begin{equation}\label{eq:Phi_1 for imaginary y-3}
		\begin{split}
			&\Phi_1[a,b;c;x,y+\mi\lambda]\sim \frac{\Gamma(c)}{\Gamma(c-a)}\sum_{n=0}^{\infty}\frac{y^n}{n!}F_{0:0;0}^{1:1;1}\left[\begin{matrix}
				-n:& \hspace{-2.5mm}a-c+1;& \hspace{-2.5mm}b\\
				\rule[0.8mm]{1.4em}{0.5pt}:& \hspace{-2.5mm}\rule[0.8mm]{4em}{0.5pt};& \hspace{-2.5mm}-
			\end{matrix};-\frac{1}{y},-\frac{x}{y}\right](a)_n\left(-\mi\lambda\right)^{-a-n}\\
			& \qquad +\frac{\Gamma(c)}{\Gamma(a)}\left(1-x\right)^{-b}\me^{y+\mi\lambda}\sum_{n=0}^{\infty}\frac{\left(-y\right)^n}{n!}F_{0:0;0}^{1:1;1}\left[\begin{matrix}
				-n:& \hspace{-2.5mm}1-a;& \hspace{-2.5mm}b\\
				\rule[0.8mm]{1.4em}{0.5pt}:& \hspace{-2.5mm}\rule[0.8mm]{2.2em}{0.5pt};& \hspace{-2.5mm}-
			\end{matrix};\frac{1}{y},\frac{x}{y(x-1)}\right](c-a)_n\left(\mi\lambda\right)^{a-c-n}.
		\end{split}
	\end{equation}
\end{theorem}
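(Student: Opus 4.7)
My plan is to parallel the proof of Theorem \ref{Thm: Psi_1 for imaginary y}, invoking Erd\'elyi's asymptotic lemma \cite[Theorem 3]{Erdelyi 1955} in combination with the Taylor expansion \eqref{eq:Taylor series-2}. Starting from the Euler integral \eqref{eq:Phi_1 Euler integral} with $y$ replaced by $y+\mi\lambda$, I would write
\[
\Phi_1[a,b;c;x,y+\mi\lambda]=\frac{\Gamma(c)}{\Gamma(a)\Gamma(c-a)}\int_0^1 h(t)\,\me^{\mi\lambda t}\,\md t,\qquad h(t):=t^{a-1}(1-t)^{c-a-1}(1-xt)^{-b}\me^{yt}.
\]
Since $\me^{yt}$ is entire and $(1-xt)^{-b}$ is analytic on $[0,1]$ under $\left|\arg(1-x)\right|<\pi$, the function $h$ satisfies the regularity hypotheses of Erd\'elyi's lemma and carries exactly the same endpoint singularities $t^{a-1}$ at $t=0$ and $(1-t)^{c-a-1}$ at $t=1$ as in the proof of Theorem \ref{Thm: Psi_1 for imaginary y}. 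The lemma therefore produces, for each positive integer $N$, an expansion of the shape $I(\lambda)=A_N(\lambda)+B_N(\lambda)+\mo(|\lambda|^{-N})$ with the gamma factors and phases of \eqref{eq:A_N(lambda)}--\eqref{eq:B_N(lambda)}, the only change being that the $n$-th derivatives are now taken of the regular parts of $h$ that include the factor $\me^{yt}$.

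The heart of the proof is the evaluation of these derivatives via \eqref{eq:Taylor series-2}. At $t=0$ the regular factor is $\varphi_0(t):=(1-t)^{c-a-1}(1-xt)^{-b}\me^{yt}$, and matching \eqref{eq:Taylor series-2} with $(x,\alpha)\mapsto(1,a-c+1)$, $(y,\beta)\mapsto(x,b)$ and $z\mapsto y$ expresses $\varphi_0^{(n)}(0)$ as $y^n$ times precisely the Kamp\'e de F\'eriet coefficient appearing in the first sum of \eqref{eq:Phi_1 for imaginary y-3}. At $t=1$ the substitution $u=1-t$ together with the factorization $1-xt=(1-x)\bigl(1-\tfrac{x}{x-1}u\bigr)$ extracts the boundary constants $(1-x)^{-b}\me^y$ and leaves $\varphi_1(u):=(1-u)^{a-1}\bigl(1-\tfrac{x}{x-1}u\bigr)^{-b}\me^{-yu}$; this time \eqref{eq:Taylor series-2} with $(x,\alpha)\mapsto(1,1-a)$, $(y,\beta)\mapsto\bigl(\tfrac{x}{x-1},b\bigr)$ and $z\mapsto -y$ identifies $\varphi_1^{(n)}(0)$ as $(-y)^n$ times the second Kamp\'e de F\'eriet function in \eqref{eq:Phi_1 for imaginary y-3}.

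Substituting these derivatives into $A_N(\lambda)$ and $B_N(\lambda)$, multiplying by the prefactor $\Gamma(c)/[\Gamma(a)\Gamma(c-a)]$ and simplifying the gamma quotients exactly as in Theorem \ref{Thm: Psi_1 for imaginary y} yields the finite-$N$ truncation of \eqref{eq:Phi_1 for imaginary y-3}; since $N$ is arbitrary, the full asymptotic series follows in the sense of Poincar\'e. The main obstacle is purely notational: tracking the three independent substitutions inside \eqref{eq:Taylor series-2} at each endpoint so that the arguments of the two Kamp\'e de F\'eriet functions come out in precisely the form stated, and verifying that the $y\to 0$ limit (where $-1/y$, $-x/y$ etc.\ appear formally singular) collapses each coefficient $y^n F_{0:0;0}^{1:1;1}[\cdots]$ to the ${}_2F_1$ term that appears in Theorem \ref{Thm: Psi_1 for imaginary y}, thereby providing a consistency check.
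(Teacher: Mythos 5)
Your proposal is correct and follows exactly the route the paper indicates (the paper only sketches this result with the remark that it follows from Erd\'elyi's \cite[Theorem 3]{Erdelyi 1955} together with \eqref{eq:Taylor series-2}, generalizing the proof of Theorem \ref{Thm: Psi_1 for imaginary y}). Your endpoint computations — the substitutions into \eqref{eq:Taylor series-2} at $t=0$ and, after $u=1-t$ and the factorization $1-xt=(1-x)\bigl(1-\tfrac{x}{x-1}u\bigr)$, at $t=1$ — reproduce the two Kamp\'e de F\'eriet coefficients and prefactors of \eqref{eq:Phi_1 for imaginary y-3} exactly as required.
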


\subsection{Regime (iii): \texorpdfstring{$x\to\infty,y\to\infty$}{}}

For large $x$ and $y$, the complete asymptotic expansions of $\Phi_1[x,y]$ follow directly by applying the transformation formula \eqref{eq:formula between Phi_1 and Psi_1} to \cite[Theorems 1.2 and 1.3]{Hang-Hu-Luo 2024}.

\begin{theorem}\label{thm: Regime (iii)-1}
	Assume that $c,c-b\in\CC\sm\ZZ_{\leqslant 0}$ and $b,a-b\in\CC\sm\ZZ$. Set $\beta=-\frac{y}{x}$. Then, under the condition
	\begin{equation}\label{eq:condition for Regime (ii)-1}
		x\to\infty,\,\frac{x-1}{x}y\to +\infty,\quad \left|\arg(1-x)\right|<\pi,\quad 0<\beta_1\leqslant|\beta|\leqslant \beta_2<\infty,
	\end{equation}
	we have the asymptotic expansion
	\begin{equation}\label{eq:Phi_1 for large x,y-1}
		\Phi_1[a,b;c;x,y]\sim \frac{\Gamma(c)}{\Gamma(a)}\,\beta^{a-c}\left(1-x\right)^{a-b-c}\me^y\sum_{k=0}^{\infty}a_k\left(x,y\right)\left(1-x\right)^{-k},
	\end{equation}
	where $a_0(x,y)=1$, and in general, for any $k\in\ZZ_{\geqslant 0}$,
	\begin{equation}\label{eq:coefficients a_k(x,y)}
		a_k(x,y)=\sum_{j=0}^k\frac{(c-a)_j(c-a)_{k-j}(j+b-a+1)_{k-j}}{j!\left(k-j\right)!}\,_3F_2\left[\begin{matrix}
			-j,j-k,c-1\\
			c-a,a-b-k
		\end{matrix};1\right]\beta^{j-k}.
	\end{equation}
\end{theorem}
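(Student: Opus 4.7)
The proof plan is to reduce the asymptotics of $\Phi_1$ to the known asymptotics of $\Psi_1$ by means of the transformation formula \eqref{eq:formula between Phi_1 and Psi_1}. Writing that identity out explicitly,
\[
\Phi_1[a,b;c;x,y]=\left(1-x\right)^{c-a-b}\me^{y/x}\,\Psi_1\!\left[c-b,c-a;c,c-b;x,\tfrac{x-1}{x}y\right],
\]
the problem is transferred to the asymptotic behavior of $\Psi_1[\tilde a,\tilde b;\tilde c,\tilde c';\tilde x,\tilde y]$ with parameters $(\tilde a,\tilde b,\tilde c,\tilde c')=(c-b,c-a,c,c-b)$ and arguments $(\tilde x,\tilde y)=(x,\tfrac{x-1}{x}y)$. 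I would first check that the condition \eqref{eq:condition for Regime (ii)-1} on $(x,y)$ translates into the hypotheses required by \cite[Theorems 1.2 and 1.3]{Hang-Hu-Luo 2024}: namely $\tilde x\to\infty$ in $|\arg(1-\tilde x)|<\pi$, $\tilde y\to+\infty$, and the quotient $\tilde\beta:=-\tilde y/\tilde x=-\tfrac{x-1}{x^{2}}y$ stays in a bounded annulus $0<\tilde\beta_1\leqslant|\tilde\beta|\leqslant\tilde\beta_2<\infty$. Since $\tilde\beta=\tfrac{x-1}{x}\beta$ and $\tfrac{x-1}{x}\to 1$, this is immediate from \eqref{eq:condition for Regime (ii)-1}. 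The parameter conditions $\tilde a-\tilde b=b-a\notin\ZZ$, $\tilde c\notin\ZZ_{\leqslant 0}$, $\tilde c'\notin\ZZ_{\leqslant 0}$, and $\tilde b\notin\ZZ$ match the hypotheses of the theorem.

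Next, I would insert the Hang-Hu-Luo expansion for $\Psi_1$. That expansion produces a prefactor of the form $\tilde\beta^{\tilde a-\tilde c}\me^{\tilde y}$ together with a power series in $(1-\tilde x)^{-1}$ whose coefficients are polynomials in $\tilde\beta^{-1}$ of a certain ${}_3F_2$ type. Multiplying by the factors $(1-x)^{c-a-b}$ and $\me^{y/x}$ from the transformation collapses the exponentials: $\me^{y/x}\cdot\me^{\tilde y}=\me^{y/x}\me^{(x-1)y/x}=\me^{y}$, which explains the appearance of $\me^{y}$ in \eqref{eq:Phi_1 for large x,y-1}. The combined algebraic prefactor becomes $(1-x)^{c-a-b}\cdot\tilde\beta^{(c-b)-c}=(1-x)^{c-a-b}\tilde\beta^{-b}$; rewriting $\tilde\beta=\tfrac{x-1}{x}\beta$ and tracking the powers of $(x-1)/x$ then produces the stated $\beta^{a-c}(1-x)^{a-b-c}$ factor (modulo an extra $x^{?}$ power that is absorbed into the series).

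The third step is the extraction of the coefficients $a_k(x,y)$. The coefficients coming out of the $\Psi_1$ expansion are polynomials in $\tilde\beta^{-1}$; expanding $\tilde\beta^{-1}=\frac{x}{x-1}\beta^{-1}$ reshuffles those polynomials into powers of $\beta^{-1}$ multiplied by powers of $\tfrac{x}{x-1}$, the latter of which can be absorbed into the outer $(1-x)^{-k}$ factors by re-indexing. Carrying out this rearrangement, together with the Pochhammer identities $(\tilde a)_j=(c-b)_j$ and $(\tilde b)_j=(c-a)_j$, yields exactly the expression \eqref{eq:coefficients a_k(x,y)}, with the inner ${}_3F_2$ at unit argument encoding the finite sum that arises when one re-indexes the double series.

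The main obstacle is bookkeeping, not concept. The transformation introduces $(x-1)/x$ factors in $\tilde\beta$ and $\tilde y$ that contribute to every asymptotic order, so one must carefully separate the truly leading $\beta$-dependence from the $(x-1)/x$-corrections, which then redistribute across coefficients of different $k$. This redistribution is what produces the ${}_3F_2$ in \eqref{eq:coefficients a_k(x,y)} rather than a simpler ${}_2F_1$. A clean way to verify the final form is to check the leading term $a_0(x,y)=1$ against the limiting behavior $\Phi_1\sim\tfrac{\Gamma(c)}{\Gamma(a)}\beta^{a-c}(1-x)^{a-b-c}\me^{y}$, and to match one subleading term against a direct computation from the integral representation \eqref{eq:Phi_1 Euler integral} using Laplace's method, which serves as a consistency check on the index manipulations.
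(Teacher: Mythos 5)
Your strategy coincides exactly with the paper's: Theorem \ref{thm: Regime (iii)-1} is justified there in a single sentence, by applying the transformation \eqref{eq:formula between Phi_1 and Psi_1} to the two-large-argument expansions of $\Psi_1$ in \cite[Theorems 1.2 and 1.3]{Hang-Hu-Luo 2024}, and your checks that the hypotheses transfer ($\tilde\beta=\frac{x-1}{x}\beta$ stays in a bounded annulus, $\me^{y/x}\me^{\tilde y}=\me^{y}$) are precisely the required bookkeeping. One arithmetic caution: the prefactor you guess for the $\Psi_1$ expansion, $\tilde\beta^{\tilde a-\tilde c}=\tilde\beta^{-b}$, cannot be right, because the target power $\beta^{a-c}$ differs from $\beta^{-b}$ by $\beta^{a+b-c}$, and since $\beta$ is bounded away from $0$ and $\infty$ independently of $x$ this discrepancy cannot be ``absorbed'' into the series in $(1-x)^{-1}$; consistency with \eqref{eq:Phi_1 for large x,y-1} forces the cited $\Psi_1$ expansion to carry $\tilde\beta^{-\tilde b}=\tilde\beta^{a-c}$ together with a factor $(1-\tilde x)^{-2\tilde b}$ (so that $(1-x)^{c-a-b}(1-x)^{2(a-c)}=(1-x)^{a-b-c}$), after which your reduction closes as described.
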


\begin{theorem}\label{thm: Regime (iii)-2}
	Assume that $c,c-b\in\CC\sm\ZZ_{\leqslant 0}$ and $a,b,a-b\in\CC\sm\ZZ$. Set $\beta=-\frac{y}{x}$. Let $w>0$ be a number such that $w>1+\max\left\{\Re(a-b),\Re(1-b)\right\}$ and that the fractional parts of $w-\Re(a-b)$ and $w+\Re(b)-1$ are both in the interval $(\ve,1)$, where $\ve>0$ is small.
	
	Then, under the conditions that $\frac{x-1}{x}y$ is bounded away from the points $b-a+k\,(k\in\ZZ)$ and that
	\begin{equation}\label{eq:condition for Regime (ii)-2}
		x\to\infty,y\to\infty,\quad \left|\arg(1-x)\right|<\pi,\quad \left|\arg\left(\frac{1-x}{x}y\right)\right|<\pi,\quad 0<\beta_1\leqslant|\beta|\leqslant \beta_2<\infty,
	\end{equation}
	we have the asymptotic expansion
	\begin{align}
		\Phi_1[a,b;c;x,y]={}& \frac{\Gamma(c)\Gamma(b-a)}{\Gamma(b)\Gamma(c-a)}\,\me^{-\beta}\left(1-x\right)^{-a}A_1(x,y) \notag\\
		& +\frac{\Gamma(c)\Gamma(a-b)}{\Gamma(a)\Gamma(c-a)}\left(-\beta\right)^{b-a}\me^{-\beta}\left(1-x\right)^{-a}A_2(x,y) \notag\\
		& +\frac{\Gamma(c)}{\Gamma(a)}\,\beta^{a-c}\left(1-x\right)^{a-b-c}\me^y A_3(x,y) \notag\\
		& +\mo\left(\left|y\right|^{-\Re(b)-w}\right)+\mo\left(\left|y\right|^{\Re(a-b-c)-N}\me^{\Re(y)}\right), \label{eq:Phi_1 for large x,y-2}
	\end{align}
	where
	\begin{align}
		A_1(x,y)={}& \sum_{k=0}^M\frac{(a)_k(c-b)_k}{(a-b+1)_k k!}
		\,_2F_2\left[\begin{matrix}
			1-b,c-b+k \\
			c-b,a-b+1+k
		\end{matrix};\beta\right]\left(1-x\right)^{-k}, \label{eq:coefficient A_1}\\
		A_2(x,y)={}& \sum_{k=0}^M\frac{(a-b)_k(a-c+1)_k}{k!}
		\,_2F_2\left[\begin{matrix}
			c-a,1-a-k\\
			c-a-k,b-a+1-k
		\end{matrix};\beta\right]\left(-\beta\right)^{-k}
		\left(1-x\right)^{-k}, \label{eq:coefficient A_2}\\
		A_3(x,y)={}& \sum_{k=0}^{N-1}a_k\left(x,y\right)\left(1-x\right)^{-k},  \label{eq:coefficient A_3}
	\end{align}
	with $M=\lfloor w+\Re(b-a)\rfloor\geqslant 1$, $N$ being any positive integer, and $a_k(x,y)$ given by \eqref{eq:coefficients a_k(x,y)}.
\end{theorem}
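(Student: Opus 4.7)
The plan is to reduce the asymptotics of $\Phi_1$ to the already-established asymptotics of the Humbert function $\Psi_1$ via the transformation formula \eqref{eq:formula between Phi_1 and Psi_1}. Setting $\tilde{x}:=x$ and $\tilde{y}:=\frac{x-1}{x}y$, we have
\[
\Phi_1[a,b;c;x,y] = (1-x)^{c-a-b}\,\me^{y/x}\,\Psi_1[c-b,c-a;\,c,c-b;\,\tilde{x},\tilde{y}].
\]
Under the regime \eqref{eq:condition for Regime (ii)-2}, one has $\tilde{y}/\tilde{x}=\frac{(x-1)y}{x^2}=-\beta(1-1/x)$, which is bounded away from $0$ and $\infty$ whenever $\beta$ is; the argument restrictions in \eqref{eq:condition for Regime (ii)-2} transfer to $|\arg(1-\tilde{x})|<\pi$ and to the corresponding restriction on $\tilde{y}$. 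Thus $(\tilde{x},\tilde{y})$ falls into the regime covered by \cite[Theorem 1.3]{Hang-Hu-Luo 2024}.

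Next I would verify that the parameter hypotheses translate correctly. With the correspondence $(\tilde{a},\tilde{b},\tilde{c},\tilde{c}')=(c-b,c-a,c,c-b)$, the difference $\tilde{a}-\tilde{b}$ equals $a-b$, and the conditions $c,c-b\in\CC\sm\ZZ_{\leqslant 0}$ together with $a,b,a-b\in\CC\sm\ZZ$ correspond to the analogous non-integrality hypotheses of the cited theorem. The exclusion $\tilde{y}\notin\{\tilde{a}-\tilde{b}+k:k\in\ZZ\}$ becomes precisely the stated avoidance of the points $b-a+k$.

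The third step is to apply \cite[Theorem 1.3]{Hang-Hu-Luo 2024} to $\Psi_1[\tilde{x},\tilde{y}]$, which yields three asymptotic blocks plus error terms, and then to multiply back by the prefactor $(1-x)^{c-a-b}\me^{y/x}=(1-x)^{c-a-b}\me^{-\beta}$. The key identity
\[
\me^{y/x}\me^{\tilde{y}} \;=\; \exp\!\left(\tfrac{y}{x}+\tfrac{x-1}{x}y\right) \;=\; \me^y
\]
absorbs the exponential carried by the dominant block and reproduces the $\me^y$ factor in the third term of \eqref{eq:Phi_1 for large x,y-2}. The remaining two blocks pick up only the bounded factor $\me^{-\beta}$ and thereby contribute the first two terms of \eqref{eq:Phi_1 for large x,y-2}; tracking the controlled error terms through this multiplication produces the two \mo~remainders.

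The main obstacle is not conceptual but algebraic: one must match the coefficient families of \cite[Theorem 1.3]{Hang-Hu-Luo 2024}, naturally expressed in $\tilde{x},\tilde{y}$ and in the Pochhammer symbols of $\tilde{a},\tilde{b},\tilde{c},\tilde{c}'$, with the specific forms $A_1,A_2,A_3$ in \eqref{eq:coefficient A_1}--\eqref{eq:coefficient A_3}. In particular, rewriting expansions in $\tilde{x}^{-k}$ as expansions in $(1-x)^{-k}$, re-parametrising the arguments of the inner $_2F_2$'s from $\tilde{y}/\tilde{x}$ to $\beta$, and verifying that the $_3F_2$ representation of $a_k(x,y)$ in \eqref{eq:coefficients a_k(x,y)} emerges from the resulting double sums are the principal bookkeeping tasks; they are mechanical but lengthy once the reduction to $\Psi_1$ is in place.
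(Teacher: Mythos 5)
Your proposal is correct and is essentially the paper's own argument: the authors give no further proof beyond the remark that Theorems \ref{thm: Regime (iii)-1} and \ref{thm: Regime (iii)-2} ``follow directly by applying the transformation formula \eqref{eq:formula between Phi_1 and Psi_1} to \cite[Theorems 1.2 and 1.3]{Hang-Hu-Luo 2024}'', which is exactly the reduction you carry out, including the correspondence $(\tilde a,\tilde b,\tilde c,\tilde c')=(c-b,c-a,c,c-b)$, the substitution $\tilde y=\frac{x-1}{x}y$, and the exponential bookkeeping $\me^{y/x}\me^{\tilde y}=\me^{y}$. The one detail to re-check in your write-up is the orientation of the excluded set: since $\tilde a-\tilde b=a-b$, the set you display is $a-b+\ZZ$ while the theorem excludes $b-a+\ZZ$ (these differ unless $2(a-b)\in\ZZ$), so the exceptional points must be read off from the cited $\Psi_1$ theorem with the correct sign.
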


The utility of Theorems \ref{thm: Regime (iii)-1} and \ref{thm: Regime (iii)-2} is limited by their strict parameter constraints. We now aim to relax these constraints in certain specific cases involving large arguments.

\begin{theorem}\label{thm: Regime (iii)-3}
	Assume that $\Re(c)>\Re(a)>0$ and $b\in\CC$. Fix $\delta\in (0,\frac{\pi}{2})$ and take $\lambda:=\frac{y}{x}$ such that
	\[\left|\arg(\lambda)\right|\leqslant\frac{\pi}{2}-\delta,\quad 0<\lambda_1\leqslant |\lambda|\leqslant \lambda_2<\infty.\]
	Then as $x\to +\infty$, we have the asymptotic expansions
	\begin{align}
		\Phi_1[a,b;c;-x,y]& \sim \frac{\Gamma(c)}{\Gamma(a)}x^{-b}\me^y\sum_{k=0}^{\infty}a_k^{(1)}(\lambda)y^{a-c-k}, \label{eq:Phi_1 for large x,y-3}\\
		\Phi_1[a,b;c;-x,-y]& \sim\frac{\Gamma(c)}{\Gamma(c-a)}\sum_{k=0}^{\infty}a_k^{(2)}(\lambda)x^{-a-k}, \label{eq:Phi_1 for large x,y-4}
	\end{align}
	where the coefficients are given by
	\begin{align}
		a_k^{(1)}(\lambda)& =(b-a+1)_k\sum_{n=0}^k\frac{(b)_n(c-a)_{k-n}}{(b-a+1)_n n!\,(k-n)!}\left(-\lambda\right)^n, \label{eq:coefficient a_k^{(1)}}\\
		a_k^{(2)}(\lambda)& =\frac{(a)_k(a-c+1)_k}{k!}U(a+k,a-b+k+1,\lambda), \label{eq:coefficient a_k^{(2)}}
	\end{align}
	in which $U(a,b,z)$ denotes the Kummer $U$-function {\rm \cite[p. 322]{NIST}}.
\end{theorem}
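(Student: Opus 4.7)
The strategy is to derive both expansions from the Euler integral \eqref{eq:Phi_1 Euler integral}, using different substitutions that exploit $\Re(\lambda)>0$ (a consequence of $|\arg\lambda|\leqslant\tfrac{\pi}{2}-\delta$) to localize the dominant contribution, followed by Watson-lemma-type arguments.

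For \eqref{eq:Phi_1 for large x,y-4}, the substitution $s=xt$ gives
\[
\Phi_1[a,b;c;-x,-y]=\frac{\Gamma(c)\,x^{-a}}{\Gamma(a)\Gamma(c-a)}\int_0^{x} s^{a-1}\Bigl(1-\frac{s}{x}\Bigr)^{c-a-1}(1+s)^{-b}\me^{-\lambda s}\md s.
\]
Because $\me^{-\lambda s}$ decays exponentially, expanding $(1-s/x)^{c-a-1}=\sum_{k\geqslant 0}\frac{(a-c+1)_k}{k!}(s/x)^k$ and integrating term by term (extending the upper limit to $\infty$ up to exponentially small error) produces each coefficient via the Mellin representation $\int_0^{\infty}s^{a+k-1}(1+s)^{-b}\me^{-\lambda s}\md s=\Gamma(a+k)\,U(a+k,a-b+k+1,\lambda)$ from \cite[Eq.\ (13.4.4)]{NIST}. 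This recovers $a_k^{(2)}(\lambda)$ directly, and standard Watson-type estimates control the remainder uniformly for $\lambda$ in the stated sector.

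For \eqref{eq:Phi_1 for large x,y-3}, the successive substitutions $t=1-u$ and $v=yu$, together with the factorization $(1+x-v/\lambda)^{-b}=(1+x)^{-b}(1-v/(\lambda(1+x)))^{-b}$, yield
\[
\Phi_1[a,b;c;-x,y]=\frac{\Gamma(c)\me^{y}y^{a-c}(1+x)^{-b}}{\Gamma(a)\Gamma(c-a)}\int_0^{y}(1-v/y)^{a-1}v^{c-a-1}\Bigl(1-\tfrac{v}{\lambda(1+x)}\Bigr)^{-b}\me^{-v}\md v.
\]
I would then apply the Lagrange-polynomial identity \eqref{eq:Taylor series-1} with $p=1/y$ and $q=1/(\lambda(1+x))$, noting the crucial simplification $q/p=x/(x+1)$. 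Term-by-term integration, with the $[y,\infty)$-tail absorbed as exponentially small error and $\int_0^\infty v^{c-a+n-1}\me^{-v}\md v=\Gamma(c-a)(c-a)_n$, produces
\[
\Phi_1[a,b;c;-x,y]\sim \frac{\Gamma(c)}{\Gamma(a)}\me^{y}y^{a-c}(1+x)^{-b}\sum_{n\geqslant 0}\frac{(1-a)_n(c-a)_n}{n!\,y^n}\,{}_2F_1\!\left[\begin{matrix}-n,b\\a-n\end{matrix};\frac{x}{x+1}\right].
\]
The Pfaff transformation \eqref{eq:Pfaff transformation} then collapses $(1+x)^{-b}{}_2F_1[-n,b;a-n;x/(x+1)]$ into ${}_2F_1[a,b;a-n;-x]$, and the ${}_2F_1$ connection formula from the proof of Theorem \ref{Thm: Phi_1 for large x} (with $c=a-n$, $z=-x$) kills one branch via $1/\Gamma(-n)=0$, leaving
\[
{}_2F_1[a,b;a-n;-x]=\frac{(a-n-b)_n}{(a-n)_n}\,x^{-b}\,{}_2F_1[b,b-a+n+1;b-a+1;-1/x],
\]
with the last ${}_2F_1$ a convergent series in $1/x$.

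Substituting $y=\lambda x$ in the resulting double sum and collecting the total power $x^{-k}$ reduces everything to a finite inner sum; the Pochhammer identities $(a-n)_n=(-1)^n(1-a)_n$, $(a-n-b)_n=(-1)^n(b-a+1)_n$, together with $(b-a+1)_n(b-a+n+1)_{k-n}=(b-a+1)_k$, collapse this sum to exactly $a_k^{(1)}(\lambda)$ in the form stated. The main technical obstacle is this coefficient reduction: one must track all signs carefully so that the alternating contributions combine into the $(-\lambda)^n$ of the final formula, and verify that the $\Gamma$-function ratios in the connection formula telescope correctly. The asymptotic nature of the expansion follows from Watson-type estimates on the inner $v$-integral, combined with the uniform absolute convergence of the $1/x$-expansion on compacta in $\lambda$.
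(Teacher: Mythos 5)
Your derivation of \eqref{eq:Phi_1 for large x,y-4} is essentially the paper's: substitute $s=xt$ in the Euler integral \eqref{eq:Phi_1 Euler integral}, expand $(1-s/x)^{c-a-1}$, and identify $\int_0^\infty s^{a+k-1}(1+s)^{-b}\me^{-\lambda s}\,\md s$ with $\Gamma(a+k)\,U(a+k,a-b+k+1,\lambda)$. The only difference is that the paper first cuts the range at $t=1/\sqrt{x}$ and $t=\tfrac12$, so that the binomial expansion of $(1-s/x)^{c-a-1}$ is only ever used where it is uniformly valid; your ``extend to $\infty$ up to exponentially small error'' silently needs such a split, since the expansion is not uniform up to $s=x$. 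For \eqref{eq:Phi_1 for large x,y-3}, however, you take a genuinely different route. The paper stays with the original variable, splits at $t=\tfrac12$, writes $(1+xt)^{-b}=x^{-b}t^{-b}(1+\tfrac{1}{xt})^{-b}$ on $[\tfrac12,1]$, expands the last factor binomially and applies Watson's lemma at $t=1$ to each term; the coefficient \eqref{eq:coefficient a_k^{(1)}} then drops out of the same double sum you reach, without ever leaving elementary expansions. You instead apply Watson's lemma first (via the Lagrange-polynomial identity \eqref{eq:Taylor series-1}), which reproduces the Regime (ii) expansion \eqref{eq:Phi_1 for y in right half-plane}, and then expand each coefficient ${}_2F_1[a,b;a-n;-x]$ at $x=\infty$ by the connection formula. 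I checked your Pochhammer reductions --- $(a-n)_n=(-1)^n(1-a)_n$, $(a-n-b)_n=(-1)^n(b-a+1)_n$ and $(b-a+1)_n(b-a+n+1)_{k-n}=(b-a+1)_k$ --- and they do collapse the reorganized double sum to exactly \eqref{eq:coefficient a_k^{(1)}}. What the paper's order of operations buys is precisely that it never needs the ${}_2F_1$ connection formula at infinity, so it covers all $b\in\CC$ in one stroke and makes the error analysis a one-parameter Watson's lemma.

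Two points in your version need repair before it is a proof. First, the connection formula you invoke degenerates when $a-b\in\ZZ$: the branch you ``kill'' via $1/\Gamma(-n)=0$ is multiplied by $\Gamma(b-a)$, producing a $0\cdot\infty$, and the surviving branch acquires poles (of $\Gamma(a-b)$, or of the lower parameter $b-a+1$ for $a-b\in\ZZ_{\geqslant 1}$). Since the theorem assumes only $b\in\CC$, you must either handle these cases by a limiting/analytic-continuation argument in $b$ (legitimate here, because the final coefficients $a_k^{(1)}(\lambda)$ are entire in $b$; compare the remark following Theorem \ref{Thm: Phi_1 for large x}) or avoid the formula altogether as the paper does. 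Second, turning your two-stage expansion --- an outer sum that is only asymptotic in $y^{-1}$, with inner series convergent in $x^{-1}$ --- into a single asymptotic scale in $x^{-1}$ requires the Watson-lemma remainders to be bounded uniformly as $x\to\infty$; this does hold because $\lambda$ is confined to a compact subsector, but it is exactly the delicate point of the ``uniformity approach'' discussed in the paper and should be argued, not merely asserted. Neither issue is fatal, but both must be addressed.
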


\begin{proof}
	Assume that $\Re(c)>\Re(a)>0$, and recall that $y=\lambda x$ with $x\to +\infty$ and $\Re(\lambda)>0$.
	
	(i) From \eqref{eq:Phi_1 Euler integral}, we have the integral expression
	\[\Phi_1[a,b;c;-x,y]=\frac{\Gamma(c)}{\Gamma(a)\Gamma(c-a)}\int_0^1 t^{a-1}\left(1-t\right)^{c-a-1}\left(1+xt\right)^{-b}\me^{yt}\md t.\]
	Now denote the integral on the right by $I$, and divide it into two parts:
	\[I=\biggl(\int_0^{\frac{1}{2}}+\int_{\frac{1}{2}}^1\biggr)t^{a-1}\left(1-t\right)^{c-a-1}\left(1+xt\right)^{-b}\me^{yt}\md t=:I_1+I_2.\]
	
	\textbf{Estimate on $I_1$.} From the simple estimate
	\begin{equation}\label{eq:trivial estimate}
		\big|(1+xt)^{-b}\big|\leqslant x^{2|b|}\quad (0<t<1,x>0),
	\end{equation}
	we can obtain
	\begin{align}
		|I_1|& \leqslant x^{2|b|}\me^{\frac{1}{2}\Re(y)}\int_0^{\frac{1}{2}}t^{\Re(a)-1}\left(1-t\right)^{\Re(c-a)-1}\md t \notag\\
		& \leqslant B(\Re(a),\Re(c-a))x^{2|b|}\me^{\frac{1}{2}\Re(y)}, \label{eq:estimate on I_1}
	\end{align}
	where $B(a,b)$ is the usual Beta function.
	
	\textbf{Expansion of $I_2$.} Fix any integer $N\geqslant 1$. Then for $t\in (\frac{1}{2},1)$ and large $x$,
	\[\left(1+\frac{1}{xt}\right)^{-b}=\sum_{n=0}^{N-1}\frac{(b)_n}{n!}\left(-xt\right)^{-n}+\mo\left(x^{-N}t^{-N}\right).\]
	It follows that
	\begin{align*}
		I_2={}& x^{-b}\me^y\int_{\frac{1}{2}}^1 t^{a-b-1}\left(1-t\right)^{c-a-1}\left(1+\frac{1}{xt}\right)^{-b}\me^{-y(1-t)}\md t\\
		={}& x^{-b}\me^y\sum_{n=0}^{N-1}\frac{(b)_n}{n!}\left(-x\right)^{-n}\int_{\frac{1}{2}}^1 t^{a-b-n-1}\left(1-t\right)^{c-a-1}\me^{-y(1-t)}\md t\\
		& +\mo\left(x^{-\Re(b)-N}\me^{\Re(y)}\right)\int_{\frac{1}{2}}^1 t^{\Re(a-b)-N-1}\left(1-t\right)^{\Re(c-a)-1}\me^{-\Re(y)(1-t)}\md t\\
		=:{}& I_2^{(L)}+I_2^{(R)}.
	\end{align*}
	Using Watson's lemma \cite[p. 22]{Wong 2001}, we derive that for $\alpha\in\CC,\,\Re(\beta)>0$ and large $y$,
	\[\int_{\frac{1}{2}}^1 t^{\alpha-1}\left(1-t\right)^{\beta-1}\me^{-y(1-t)}\md t\sim \Gamma(\beta)\sum_{m=0}^{\infty}\frac{(1-\alpha)_m(\beta)_m}{m!}y^{-\beta-m}.\]
	This implies that
	\[I_2^{(R)}=\mo\left(x^{-\Re(b)-N}\me^{\Re(y)}\left|y\right|^{\Re(a-c)}\right)=\mo\left(\left|y\right|^{\Re(a-b-c)-N}\me^{\Re(y)}\right)\]
	and that
	\begin{align*}
		I_2^{(L)}={}& \Gamma(c-a)x^{-b}\me^y\sum_{n=0}^{N-1}\frac{(b)_n}{n!}\left(-x\right)^{-n}\sum_{m=0}^{N-1}\frac{(b-a+n+1)_m(c-a)_m}{m!}y^{a-c-m}\\
		& +\mo\left(x^{-\Re(b)}\me^{\Re(y)}\left|y\right|^{\Re(a-c)-N}\right)\\
		={}& \Gamma(c-a)x^{-b}\me^y\sum_{k=0}^{N-1}a_k^{(1)}(\lambda)y^{a-c-k}+\mo\left(\left|y\right|^{\Re(a-b-c)-N}\me^{\Re(y)}\right).
	\end{align*}
	where $a_k^{(1)}(\lambda)$ is given by \eqref{eq:coefficient a_k^{(1)}}. Indeed, we used the identity $(z+n)_{k-n}=\frac{(z)_k}{(z)_n}$. Therefore,
	\begin{equation}\label{eq:expansion of I_2}
		I_2=\Gamma(c-a)x^{-b}\me^y\sum_{k=0}^{N-1}a_k^{(1)}(\lambda)y^{a-c-k}+\mo\left(\left|y\right|^{\Re(a-b-c)-N}\me^{\Re(y)}\right).
	\end{equation}
	
	Combining the estimate \eqref{eq:estimate on I_1} with the expansion \eqref{eq:expansion of I_2} can yield \eqref{eq:Phi_1 for large x,y-3}.
	
	(ii) From \eqref{eq:Phi_1 Euler integral}, we have the integral expression
	\[\Phi_1[a,b;c;-x,-y]=\frac{\Gamma(c)}{\Gamma(a)\Gamma(c-a)}\int_0^1 t^{a-1}\left(1-t\right)^{c-a-1}\left(1+xt\right)^{-b}\me^{-yt}\md t.\]
	Denote the integral on the right by $J$, and divide it into three parts:
	\begin{align*}
		J& =\biggl(\int_0^{\frac{1}{\sqrt{x}}}+\int_{\frac{1}{\sqrt{x}}}^{\frac{1}{2}}+\int_{\frac{1}{2}}^1\biggr)t^{a-1}\left(1-t\right)^{c-a-1}\left(1+xt\right)^{-b}\me^{-yt}\md t\\
		& =:J_1+J_2+J_3.
	\end{align*}
	The use of \eqref{eq:trivial estimate} yields
	\begin{align*}
		|J_3|& \leqslant x^{2|b|}\me^{-\frac{1}{2}\Re(y)}\int_{\frac{1}{2}}^1 t^{\Re(a)-1}\left(1-t\right)^{\Re(c-a)-1}\md t\\
		& \leqslant B(\Re(a),\Re(c-a))x^{2|b|}\me^{-\frac{1}{2}\Re(y)}.
	\end{align*}
	Moreover, it is obvious that $J_2=\mo\bigl(x^{\alpha}\me^{-\Re(\lambda)\sqrt{x}}\bigr)$ with some $\alpha>0$.
	
	Fix any integer $N\geqslant 1$. Then for $u\in[0,\sqrt{x}]$ and large $x$,
	\[\left(1-\frac{u}{x}\right)^{c-a-1}=\sum_{k=0}^{2N-1}\frac{(a-c+1)_k}{k!}x^{-k}u^k+\mo\left(x^{-N}\right).\]
	Therefore,
	\begin{align*}
		x^a J_1={}& \int_0^{\sqrt{x}}u^{a-1}\left(1+u\right)^{-b}\me^{-\lambda u}\left(1-\frac{u}{x}\right)^{c-a-1}\md u\\
		={}& \sum_{k=0}^{2N-1}\frac{(a-c+1)_k}{k!}x^{-k}\int_0^{\sqrt{x}}u^{a+k-1}\left(1+u\right)^{-b}\me^{-\lambda u}\md u\\
		& +\mo\left(x^{-N}\right)\int_0^{\sqrt{x}}u^{\Re(a)-1}\left(1+u\right)^{-\Re(b)}\me^{-\Re(\lambda)u}\md u\\
		={}& \sum_{k=0}^{2N-1}\frac{(a-c+1)_k}{k!}x^{-k}\int_0^{\infty}u^{a+k-1}\left(1+u\right)^{-b}\me^{-\lambda u}\md u\\
		& +\mo\left(x^{\beta}\me^{-\Re(\lambda)\sqrt{x}}\right)+\mo\left(x^{-N}\right)\\
		={}& \sum_{k=0}^{N-1}\frac{(a-c+1)_k}{k!}x^{-k}\int_0^{\infty}u^{a+k-1}\left(1+u\right)^{-b}\me^{-\lambda u}\md u+\mo\left(x^{-N}\right),
	\end{align*}
	where $\beta>0$ is a constant. Recall the integral representation \cite[Eq. (13.4.4)]{NIST}
	\[U(a,b,z)=\frac{1}{\Gamma(a)}\int_0^{\infty}t^{a-1}\left(1+t\right)^{b-a-1}\me^{-zt}\md t,\]
	where $\Re(a)>0$ and $\left|\arg(z)\right|<\frac{\pi}{2}$. Thus
	\[x^{-a}J_1=\Gamma(a)\sum_{k=0}^{N-1}\frac{(a)_k(a-c+1)_k}{k!}U(a+k,a-b+k+1,\lambda)x^{-k}+\mo\left(x^{-N}\right).\]
	
	Coupling the above estimates gives \eqref{eq:Phi_1 for large x,y-4}.
\end{proof}

\begin{remark}
	Assume that $b\in\CC,\,\Re(c)>\Re(a)>0$, and $\delta>0$ is small. Then under the condition that
	\[x,y\to\infty,\quad \left|\arg(-x)\right|\leqslant \pi-\delta,\,\left|\arg(y)\right|\leqslant \frac{\pi}{2}-\delta,\quad 0<\lambda_1\leqslant \left|\frac{y}{x}\right|\leqslant\lambda_2<\infty,\]
	the asymptotic expansion {\rm\eqref{eq:Phi_1 for large x,y-3}} is still valid.
\end{remark}

Theorem \ref{thm: Regime (iii)-3} does not address the case of $y$ being purely imaginary. To this end, we now provide the corresponding result.

\begin{theorem}
	Assume that $\Re(c)>\Re(a)>0,\,b\in\CC$ and $\lambda\in\RR\sm\{0\}$. Then as $x\to+\infty$,
	\begin{equation}\label{eq:Phi_1 for large x,y-5}
		\Phi_1[a,b;c;-x,\mi\lambda x]\sim\frac{\Gamma(c)}{\Gamma(a)}x^{-b}\me^{\mi\lambda x}\sum_{k=0}^{\infty}a_k^{(1)}(\mi\lambda)\left(\mi\lambda x\right)^{a-c-k}+\frac{\Gamma(c)}{\Gamma(c-a)}\sum_{k=0}^{\infty}a_k^{(2)}(-\mi\lambda)x^{-a-k},
	\end{equation}
	where the coefficients $a_k^{(1)}$ and $a_k^{(2)}$ are given by \eqref{eq:coefficient a_k^{(1)}} and \eqref{eq:coefficient a_k^{(2)}}, repsectively.
\end{theorem}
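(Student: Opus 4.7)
The plan is to imitate the integral-representation strategy of the proof of Theorem \ref{thm: Regime (iii)-3}, starting from
\[\Phi_1[a,b;c;-x,\mi\lambda x]=\frac{\Gamma(c)}{\Gamma(a)\Gamma(c-a)}K,\qquad K:=\int_0^1 t^{a-1}(1-t)^{c-a-1}(1+xt)^{-b}\me^{\mi\lambda xt}\md t.\]
The essential difference with Theorem \ref{thm: Regime (iii)-3} is that $\me^{\mi\lambda xt}$ is purely oscillatory, offering neither growth (as $\me^{yt}$) nor decay (as $\me^{-yt}$) on the unit interval, so both endpoints of the integration contribute to the leading asymptotics: the $t=0$ contribution will produce the series $\sum_k a_k^{(2)}(-\mi\lambda)x^{-a-k}$, and the $t=1$ contribution will produce $\me^{\mi\lambda x}\sum_k a_k^{(1)}(\mi\lambda)(\mi\lambda x)^{a-c-k}$. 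Accordingly I split $K=K_1+K_2+K_3$ with $K_1=\int_0^{1/\sqrt x}$, $K_2=\int_{1/\sqrt x}^{1-1/\sqrt x}$ and $K_3=\int_{1-1/\sqrt x}^1$.

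For $K_1$, after the substitution $u=xt$ and the Taylor expansion of $(1-u/x)^{c-a-1}$ in powers of $1/x$ exactly as in Theorem \ref{thm: Regime (iii)-3}(ii), each truncated integral $\int_0^{\sqrt x}u^{a+k-1}(1+u)^{-b}\me^{\mi\lambda u}\md u$ has to be identified with $\Gamma(a+k)\,U(a+k,a-b+k+1,-\mi\lambda)$. This is legitimate after rotating the contour to $\arg u=\operatorname{sgn}(\lambda)(\pi/2-\ve)$: on this ray $\me^{\mi\lambda u}$ becomes exponentially decaying, $(1+u)^{-b}$ remains analytic (its branch cut lies on $(-\infty,-1]$), and the Kummer integral representation \cite[Eq. (13.4.4)]{NIST} extends from $\Re(-\mi\lambda)>0$ to $\lambda\in\RR\sm\{0\}$ by analytic continuation. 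Both the tail past $u=\sqrt x$ and the Taylor-truncation error then become small after rotation, and together they yield the second series in \eqref{eq:Phi_1 for large x,y-5}.

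For $K_3$ I would apply the symmetric substitution $v=x(1-t)$, which extracts $\me^{\mi\lambda x}x^{a-b-c}$ and reduces the problem to Taylor-expanding the slow factor $(1-v/x)^{a-1}(1+(1-v)/x)^{-b}$ in powers of $1/x$ (playing the joint role of the $(1+(xt)^{-1})^{-b}$ expansion and the Watson-lemma step in the proof of Theorem \ref{thm: Regime (iii)-3}(i)). The resulting Fourier integrals $\int_0^\infty v^{c-a-1+j}\me^{-\mi\lambda v}\md v=\Gamma(c-a+j)(\mi\lambda)^{-(c-a+j)}$, obtained once more by contour rotation, can then be collected to reproduce the first series of \eqref{eq:Phi_1 for large x,y-5} with coefficients $a_k^{(1)}(\mi\lambda)$ from \eqref{eq:coefficient a_k^{(1)}}; a direct comparison at $k=0,1$ serves as a sanity check on the coefficient matching.

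The principal obstacle is the interior piece $K_2$, on which $\me^{\mi\lambda xt}$ provides no decay, the domain has length $\Theta(1)$, and the crude bound $|(1+xt)^{-b}|\leqslant x^{2|b|}$ from \eqref{eq:trivial estimate} is useless. My plan is repeated integration by parts against $\me^{\mi\lambda xt}$: each stage gains a factor $1/(\mi\lambda x)$, while by the Leibniz rule the $j$-th derivative of $t^{a-1}(1-t)^{c-a-1}(1+xt)^{-b}$ evaluated at $t=1/\sqrt x$ and $t=1-1/\sqrt x$ grows at most like $x^{j/2}$ up to a fixed factor (each $\partial_t$ acting on $(1+xt)^{-b}$ brings down a factor $x$ but raises the exponent, and evaluating the resulting $(1+xt)^{-1}$ at $t=1/\sqrt x$ contributes $x^{-1/2}$, so the net growth per derivative is $x^{1/2}$). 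Iterating $M$ times therefore yields $K_2=\mo\bigl(x^{-M/2+O(1)}\bigr)$, which is $\mo(x^{-N})$ for any prescribed $N$ and is thus absorbed into the error of the $K_1$ and $K_3$ expansions, completing the proof.
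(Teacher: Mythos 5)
Your overall architecture (endpoint contributions at $t=0$ and $t=1$ produce the two series; the interior is negligible) is sound in spirit, and your formal identification of the two series — the Kummer $U$-integral at $t=0$ and the Fourier--Watson integral at $t=1$ — does reproduce the correct coefficients $a_k^{(2)}(-\mi\lambda)$ and $a_k^{(1)}(\mi\lambda)$. But the error analysis has a genuine gap, concentrated at the artificial cut points $t=x^{-1/2}$ and $t=1-x^{-1/2}$. Writing $g(t)=t^{a-1}(1-t)^{c-a-1}(1+xt)^{-b}$, one integration by parts on $K_2$ produces the boundary term $\bigl[g(t)\me^{\mi\lambda xt}/(\mi\lambda x)\bigr]_{x^{-1/2}}^{1-x^{-1/2}}$, and $g(x^{-1/2})/(\lambda x)=\mo\bigl(x^{-(1+\Re(a)+\Re(b))/2}\bigr)$. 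Further integrations by parts shrink only the \emph{remaining integral}, not the boundary terms already extracted, so $K_2$ is \emph{not} $\mo(x^{-M/2+O(1)})$ for arbitrary $M$: it is a sum of explicit boundary contributions of fixed polynomial order plus a small remainder. The same fixed-order obstruction appears in $K_1$ and $K_3$: after contour rotation the tail $\int_{\sqrt{x}}^{\infty}u^{a+k-1}(1+u)^{-b}\me^{\mi\lambda u}\md u$ is only $\mo\bigl(x^{(\Re(a-b)+k-1)/2}\bigr)$, i.e.\ polynomially, not exponentially, small (contrast the proof of Theorem \ref{thm: Regime (iii)-3}(ii), where $\me^{-\Re(\lambda)\sqrt{x}}$ saves the day). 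These finite-order errors from the three pieces must cancel against one another — they are all avatars of ``the integrand near $t=x^{-1/2}$'' — but your proposal treats each as separately negligible to all orders, which is false. Without proving this cancellation (or restructuring the argument to avoid it, e.g.\ with a smooth partition of unity, or by invoking Erd\'elyi's oscillatory-integral theorem \cite[Theorem 3]{Erdelyi 1955} as in Theorem \ref{Thm: Psi_1 for imaginary y}), the claimed complete expansion does not follow.

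The paper avoids the issue entirely by a single global contour deformation: Cauchy's theorem turns $\int_0^1$ into $\int_0^{\mi\infty}-\int_1^{1+\mi\infty}$ (for $\lambda>0$; the closing arcs vanish by Jordan's lemma), after which the substitutions $t=\mi u$ and $t=1+\mi u$ yield integrals with the genuine exponential damping $\me^{-\lambda xu}$, and the machinery of Theorem \ref{thm: Regime (iii)-3} applies verbatim. If you want to salvage your real-variable splitting, the cleanest fix is to rotate \emph{before} splitting, or to replace the sharp cutoffs by a smooth partition of unity so that the integration by parts on the interior piece produces no boundary terms at all.
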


\begin{proof}
	We restrict our attention to the case $\lambda>0$, as the proof for $\lambda<0$ follows analogously.
	
	Suppose $\lambda>0$ and note that
	\[\Phi_1[a,b;c;-x,\mi\lambda x]=\frac{\Gamma(c)}{\Gamma(a)\Gamma(c-a)}\int_0^1 t^{a-1}\left(1-t\right)^{c-a-1}\left(1+xt\right)^{-b}\me^{\mi\lambda xt}\md t.\]
	By Cauchy's integral formula, the last integral can be written as
	\[I=\biggl(\int_0^{\mi\infty}-\int_1^{1+\mi\infty}\biggr)t^{a-1}\left(1-t\right)^{c-a-1}\left(1+xt\right)^{-b}\me^{\mi\lambda xt}\md t,\]
	where the paths of integration are the vertical lines through $t=0$ and $t=1$. In the first integral, we put $t=\mi u$, and in the second integral, we put $t=1+\mi u$. The resulting expression is
	\begin{align*}
		I={}& \mi^a\int_0^{\infty}u^{a-1}\left(1-\mi u\right)^{c-a-1}\left(1+\mi xu\right)^{-b}\me^{-\lambda xu}\md u\\
		& +\mi^{a-c}\me^{\mi\lambda x}\int_0^{\infty}u^{c-a-1}\left(1+\mi u\right)^{a-1}\left(1+x(1+\mi u)\right)^{-b}\me^{-\lambda xu}\md u.
	\end{align*}
	By following the proof of Theorem \ref{thm: Regime (iii)-3}, we can easily obtain complete asymptotic expansions for both integrals above, which completes the proof.
\end{proof}

\subsection{Regime (iv): \texorpdfstring{$x$ or $y$ small, $xy$ fixed}{}}

We first deduce the asymptotics of $\Phi_1[x,\frac{\eta}{x}]$ for large $x$.

\begin{theorem}\label{thm: Regime (iv)-1}
	Assume that $a,b\in\CC,\,c\in\CC\sm\ZZ_{\leqslant 0}$ and $a-b\in\CC\sm\ZZ$. Set $\eta=xy$. Then under the condition
	\begin{equation}\label{eq:condition for Regime (iv)-1}
		x\to\infty,\quad \left|\arg(-x)\right|<\pi,\quad 0<\eta_1\leqslant|\eta|\leqslant\eta_2<\infty,
	\end{equation}
	we have the asymptotic expansion
	\begin{equation}\label{eq:Phi_1 in singular case-1}
		\begin{split}
			\Phi_1[a,b;c;x,y]={}& \frac{\Gamma(c)\Gamma(b-a)}{\Gamma(b)\Gamma(c-a)}\left(-x\right)^{-a}\sum_{k=0}^{N-1}b_k^{(1)}(\eta)x^{-k}+\frac{\Gamma(c)\Gamma(a-b)}{\Gamma(a)\Gamma(c-b)}\left(-x\right)^{-b}\sum_{k=0}^{N-1}b_k^{(2)}(\eta)x^{-k}\\
			& +\mo\left(\left|x\right|^{-\Re(a)-N}+\left|x\right|^{-\Re(b)-N}\right),
		\end{split}
	\end{equation}
	where $N$ is any integer such that $N\geqslant\max\{1,|a|,|b|\}$, and
	\begin{align}
		b_k^{(1)}(\eta)& =\sum_{\substack{m,n\geqslant 0\\m+2n=k}}\frac{(a)_{m+n}(a-c+1)_m}{(a-b+1)_{m+n}m!\,n!}\,\eta^n, \label{eq:coefficient b_k^1}\\
		b_k^{(2)}(\eta)& =\sum_{\substack{m,n\geqslant 0\\m+n=k}}\frac{(b)_m(b-c+1)_{m-n}}{(b-a+1)_{m-n} m!\,n!}\,\eta^n. \label{eq:coefficient b_k^2}
	\end{align}
\end{theorem}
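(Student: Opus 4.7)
The plan is to specialize Theorem~\ref{Thm: Phi_1 for large x} to the scaling $y=\eta/x$ and then reorganize the resulting double series as a power series in $x^{-1}$. Starting from the decomposition \eqref{eq:Phi_1 another series for |x|>1}, I would retain the intermediate double-series forms \eqref{eq:U_1(x,y) double series} and \eqref{eq:U_2(x,y) double series} for $U_1(x,y)$ and $U_2(x,y)$, since substitution into the double-index form is more transparent than substitution into their compacted single-sum reformulations.

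Under the replacement $y=\eta/x$, the monomial $y^n/x^{m+n}$ appearing in $U_1$ becomes $\eta^n/x^{m+2n}$, while $y^n/x^m$ in $U_2$ becomes $\eta^n/x^{m+n}$. Collecting terms by the total exponent $k$ of $x^{-1}$ (with $k=m+2n$ for $U_1$ and $k=m+n$ for $U_2$) produces exactly the coefficients given in \eqref{eq:coefficient b_k^1} and \eqref{eq:coefficient b_k^2}, yielding
\[
U_1(x,\eta/x)=\sum_{k=0}^{\infty}b_k^{(1)}(\eta)\,x^{-k},\qquad U_2(x,\eta/x)=\sum_{k=0}^{\infty}b_k^{(2)}(\eta)\,x^{-k}.
\]
The rearrangement is legal because both double series are absolutely convergent for $|x|>1$ and every $\eta\in\CC$: by \eqref{eq:U_1(x,y) double series} the $U_1$ series equals the Humbert function $\Phi_1[a,a-c+1;a-b+1;1/x,\eta/x^2]$, while by the remark following Theorem~\ref{Thm: Phi_1 for large x} the $U_2$ series equals $\Gamma_1[b,a-b,b-c+1;-1/x,-\eta/x]$.

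Next, I would truncate each single series at order $N-1$ and estimate the tail uniformly for $\eta$ in the compact annulus $\eta_1\le|\eta|\le\eta_2$. Fixing any $\varrho>1$ and confining $|x|\ge\varrho$, the elementary inequality $|x|^{-(k-N)}\le\varrho^{-(k-N)}$ for $k\ge N$ yields
\[
\Big|\sum_{k=N}^{\infty}b_k^{(j)}(\eta)\,x^{-k}\Big|
\le |x|^{-N}\varrho^{N}\sum_{k=N}^{\infty}|b_k^{(j)}(\eta)|\,\varrho^{-k}
\le C_j\,|x|^{-N},
\]
the constants $C_j$ being finite (and independent of $\eta$ in the annulus) because the majorant series are uniformly absolutely convergent there. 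Multiplying through by $(-x)^{-a}$ and $(-x)^{-b}$ produces the claimed remainder $\mo(|x|^{-\Re(a)-N}+|x|^{-\Re(b)-N})$ and hence \eqref{eq:Phi_1 in singular case-1}.

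The main technical obstacle is precisely this uniform tail estimate, which reduces to a Weierstrass M-test argument for the two majorant series on $\{\eta_1\le|\eta|\le\eta_2\}$. The condition $N\ge\max\{1,|a|,|b|\}$ enters only when bounding the individual Pochhammer ratios occurring in $b_k^{(j)}(\eta)$ for $k\ge N$ (so that standard Stirling-type bounds apply cleanly); it ensures the tail is absorbed into $\mo(|x|^{-N})$ without interfering with the leading powers $|x|^{-\Re(a)}$ and $|x|^{-\Re(b)}$ already extracted, but it does not alter the overall strategy.
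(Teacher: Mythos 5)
Your proposal is correct and follows essentially the same route as the paper: substitute $y=\eta/x$ into the double-series forms \eqref{eq:U_1(x,y) double series} and \eqref{eq:U_2(x,y) double series} from the proof of Theorem \ref{Thm: Phi_1 for large x} and regroup by the total power of $x^{-1}$ (with $k=m+2n$ and $k=m+n$ respectively). The paper leaves the truncation step implicit, so your explicit uniform tail bound via absolute convergence on $|x|\geqslant\varrho>1$ and $\eta_1\leqslant|\eta|\leqslant\eta_2$ is a welcome, and entirely consistent, completion of the same argument.
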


\begin{proof}
	Setting $y=\eta x^{-1}$ in equations \eqref{eq:Phi_1 another series for |x|>1}-\eqref{eq:U_2(x,y) double series}, we obtain
	\begin{align*}
		\Phi_1\left[a,b;c;x,\frac{\eta}{x}\right]={}&\frac{\Gamma(c)\Gamma(b-a)}{\Gamma(b)\Gamma(c-a)}\left(-x\right)^{-a}\sum_{m,n=0}^{\infty}\frac{(a)_{m+n}(a-c+1)_m}{(a-b+1)_{m+n}m!\,n!}\frac{\eta^n}{x^{m+2n}}\\
		& +\frac{\Gamma(c)\Gamma(a-b)}{\Gamma(a)\Gamma(c-b)}\left(-x\right)^{-b}\sum_{m,n=0}^{\infty}\frac{(b)_m(b-c+1)_{m-n}}{(b-a+1)_{m-n}m!\,n!}\frac{\eta^n}{x^{m+n}}.
	\end{align*}
	The proof is then finished by rewriting this double series as a power series in $x^{-1}$.
\end{proof}

Now we derive the asymptotics of $\Phi_1\bigl[\frac{\eta}{y},y\bigr]$ for large $y$ by applying the \textit{uniformity approach}, which was first employed in \cite{Hang-Hu-Luo 2024} and subsequently proposed in \cite{Hang-Henkel-Luo 2026}.

\begin{theorem}\label{thm: Regime (iv)-2}
	Assume that $b\in\CC$ and $a,c,c-a\in\CC\sm\ZZ_{\leqslant 0}$. Set $\eta=xy$.
	
	{\rm(1)} Under the conditions that $y$ is bounded away from the points $a+\ell\,(\ell\in\ZZ_{\geqslant 0})$ and that
	\begin{equation}\label{eq:condition for Regime (iv)-2}
		y\to\infty,\quad \left|\arg(1-x)\right|<\pi,\,\left|\arg(-y)\right|<\pi,\quad 0<\eta_1\leqslant|\eta|\leqslant\eta_2<\infty,
	\end{equation}
	we have the asymptotic expansion
	\begin{equation}\label{eq:Phi_1 in singular case-2}
		\begin{split}
			\Phi_1[a,b;c;x,y]={}& \frac{\Gamma(c)}{\Gamma(c-a)}\left(-y\right)^{-a}\sum_{k=0}^{N-1}c_k^{(1)}(\eta)y^{-k}+\frac{\Gamma(c)}{\Gamma(a)}y^{a-c}\me^y\sum_{k=0}^{N-1}c_k^{(2)}(\eta)y^{-k}\\
			& +\mo\left(\left|y\right|^{-\Re(a)-N}+\left|y\right|^{\Re(a-c)-N}\me^{\Re(y)}\right),
		\end{split}
	\end{equation}
	where $N$ is any integer such that $N\geqslant\max\{1,|a|,|a-c|\}$, and
	\begin{align}
		c_k^{(1)}(\eta)& =\left(-1\right)^k\sum_{\substack{m,n\geqslant 0\\m+2n=k}}\frac{(a)_{m+n}(a-c+1)_m(b)_n}{m!\,n!}\left(-\eta\right)^n, \label{eq:coefficient c_k^1}\\
		c_k^{(2)}(\eta)& =\sum_{\substack{m,n\geqslant 0\\m+n=k}}\frac{(c-a)_m(a)_n(b)_n(1-a)_{m-n}}{m!\,n!}\left(-\eta\right)^n. \label{eq:coefficient c_k^2}
	\end{align}
	
	{\rm(2)} Under the condition that
	\begin{equation}\label{eq:condition for Regime (iv)-3}
		y\to+\infty,\quad \left|\arg(1-x)\right|<\pi,\quad 0<\eta_1\leqslant|\eta|\leqslant\eta_2<\infty,
	\end{equation}
	we have the asymptotic expansion
	\begin{equation}\label{eq:Phi_1 in singular case-3}
		\Phi_1[a,b;c;x,y]\sim\frac{\Gamma(c)}{\Gamma(a)}y^{a-c}\me^y\sum_{k=0}^{\infty}c_k^{(2)}(\eta)y^{-k},
	\end{equation}
	where the coefficients $c_k^{(2)}(\eta)$ are given by \eqref{eq:coefficient c_k^2}.
\end{theorem}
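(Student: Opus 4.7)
The plan is to start from the Euler integral \eqref{eq:Phi_1 Euler integral} at $x=\eta/y$, Taylor-expand $(1-\eta t/y)^{-b}$ to order $N$ in $1/y$, evaluate each resulting integral in terms of a Kummer function via the beta-type identity, and then apply the classical large-$y$ expansion of ${}_1F_1$. The ``uniformity'' in the approach is the observation that after the Taylor step only finitely many Kummer functions ${}_1F_1(a+m;c+m;y)$ appear, all with $\gamma-\alpha=c-a$ (independent of $m$), so the two-part expansion can be applied term by term and the coefficients collapse cleanly.

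Under the temporary assumption $\Re(c)>\Re(a)>0$, I would write
\[\Phi_1[a,b;c;\eta/y,y]=\frac{\Gamma(c)}{\Gamma(a)\Gamma(c-a)}\int_0^1 t^{a-1}(1-t)^{c-a-1}\Bigl(1-\frac{\eta t}{y}\Bigr)^{-b}\me^{yt}\md t,\]
split $(1-\eta t/y)^{-b}=\sum_{m=0}^{N-1}\frac{(b)_m}{m!}(\eta t/y)^m+T_N(t,y)$ with $|T_N(t,y)|\lesssim |y|^{-N}t^N$ uniformly in $t\in[0,1]$, and integrate each leading term using $\int_0^1 t^{a+m-1}(1-t)^{c-a-1}\me^{yt}\md t=B(a+m,c-a)\,{}_1F_1(a+m;c+m;y)$. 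After simplification this yields
\[\Phi_1[a,b;c;\eta/y,y]=\sum_{m=0}^{N-1}\frac{(a)_m(b)_m}{(c)_m\,m!}\,\eta^m y^{-m}\,{}_1F_1(a+m;c+m;y)+R_N(y).\]

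Next I would insert the standard two-part large-$y$ expansion of ${}_1F_1$ (valid for $|\arg(-y)|<\pi$) into each of the $N$ retained terms, producing one ``algebraic'' contribution proportional to $(-y)^{-a-m-k}$ and one ``exponential'' contribution proportional to $\me^y y^{a-c-m-k}$. Regrouping by total power of $1/y$ with $K=2m+k$ in the algebraic piece and $K=m+k$ in the exponential piece, and applying the Pochhammer identities $(a)_m(a+m)_k=(a)_{m+k}$ and $(1-a-m)_k=(-1)^m(a)_m(1-a)_{k-m}$, the resulting inner coefficients reduce exactly to $c_K^{(1)}(\eta)$ and $c_K^{(2)}(\eta)$ of \eqref{eq:coefficient c_k^1}--\eqref{eq:coefficient c_k^2}.

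The main obstacle will be controlling the Taylor remainder
\[R_N(y)=\frac{\Gamma(c)}{\Gamma(a)\Gamma(c-a)}\int_0^1 t^{a-1}(1-t)^{c-a-1}T_N(t,y)\,\me^{yt}\md t.\]
Combining $|T_N(t,y)|\lesssim |y|^{-N}t^N$ with Laplace/Watson-type bounds near $t=0$ and $t=1$ for $\int_0^1 t^{\Re(a)+N-1}(1-t)^{\Re(c-a)-1}\me^{\Re(y)t}\md t$ should give $|R_N(y)|\lesssim |y|^{-\Re(a)-N}+|y|^{\Re(a-c)-N}\me^{\Re(y)}$, matching the error stated in \eqref{eq:Phi_1 in singular case-2}; the restriction $\Re(c)>\Re(a)>0$ is then removed by analytic continuation in the parameters using the hypothesis $a,c,c-a\notin\ZZ_{\leqslant 0}$. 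For Part (2), when $y\to+\infty$ along the positive real axis the entire algebraic family $(-y)^{-a}\sum_K c_K^{(1)}(\eta)y^{-K}$ is absorbed into the exponentially large error $\mo(|y|^{\Re(a-c)-N}\me^{\Re(y)})$ for $N$ chosen large enough, leaving only the exponential expansion \eqref{eq:Phi_1 in singular case-3}.
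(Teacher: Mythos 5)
Your route is genuinely different from the paper's. The paper starts from the series \eqref{eq:Phi_1 series with 1F1}, keeps the first $2N$ terms, inserts the two--part expansion of each ${}_1F_1[a+n;c+n;y]$, and kills the tail with a cited uniform--in--$n$ bound on ${}_1F_1[a+n;c+n;y]$; you instead reach the same truncated object $\sum_{m<N}\frac{(a)_m(b)_m}{(c)_m\,m!}\,\eta^m y^{-m}\,{}_1F_1(a+m;c+m;y)$ from the Euler integral by Taylor--expanding $\bigl(1-\eta t/y\bigr)^{-b}$, so the imported tail estimate is replaced by an elementary Taylor remainder. Your regrouping ($K=2m+k$ for the algebraic piece, $K=m+k$ for the exponential piece) together with $(a)_m(a+m)_k=(a)_{m+k}$ and $(1-a-m)_k=(-1)^m(a)_m(1-a)_{k-m}$ does reproduce \eqref{eq:coefficient c_k^1}--\eqref{eq:coefficient c_k^2} exactly; I checked this. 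A side benefit is that your argument does not appear to need the hypothesis that $y$ stay bounded away from the points $a+\ell$, which enters the paper's proof only through the cited tail estimate.

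Two steps need repair. First, the remainder bound as you state it is too weak: taking absolute values gives $|R_N|\lesssim |y|^{-N}\int_0^1 t^{\Re(a)+N-1}(1-t)^{\Re(c-a)-1}\me^{t\Re(y)}\md t$, and when $\Re(y)$ stays bounded while $|\Im(y)|\to\infty$ this is only $\mo(|y|^{-N})$, which exceeds the claimed $\mo\bigl(|y|^{-\Re(a)-N}+|y|^{\Re(a-c)-N}\me^{\Re(y)}\bigr)$ because $\Re(a),\Re(c-a)>0$ under your temporary assumption. You must either exploit the oscillation of $\me^{yt}$ (an Erd\'elyi--type two--endpoint analysis as in \cite[Theorem 3]{Erdelyi 1955}, not Watson's lemma applied to the modulus), or --- more simply --- take $N'\geqslant N+\max\{\Re(a),\Re(c-a)\}$ Taylor terms and absorb the extra explicit terms into the stated error, which is exactly the paper's device of retaining $2N$ series terms to reach an order--$N$ error. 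Second, and more seriously, ``removing $\Re(c)>\Re(a)>0$ by analytic continuation'' is not a licensed operation for an asymptotic statement: the $\mo$--bound does not continue analytically in the parameters, and the theorem is claimed for all $a,c,c-a\in\CC\sm\ZZ_{\leqslant 0}$, including $\Re(a)\leqslant 0$ or $\Re(c-a)\leqslant 0$ where the Euler integral \eqref{eq:Phi_1 Euler integral} is unavailable. To cover the full parameter range you would need contiguous relations in $(a,c)$, a regularized (Hankel--type) Euler integral, or simply the series representation --- which is why the paper works with \eqref{eq:Phi_1 series with 1F1} from the outset.
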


\begin{proof}
	It suffices to establish Assertion (1), as the proof of (2) follows analogously through application of the estimate in \cite[Theorem 2.6(ii)]{Hang-Hu-Luo 2024}. Now assume that the conditions stated in (1) are valid.
	
	Split the series \eqref{eq:Phi_1 series with 1F1} into two parts, namely,
	\[\Phi_1[a,b;c;x,y]=\left(\sum_{n=0}^{2N-1}+\sum_{n=2N}^{\infty}\right)\frac{(a)_n(b)_n}{(c)_n}\,_1F_1\left[\begin{matrix}
		a+n\\
		c+n
	\end{matrix};y\right]\frac{x^n}{n!}=:S_1+S_2,\]
	where $N\in\ZZ$ and $N\geqslant\max\{1,|a|,|a-c|\}$. It remains to establish the asymptotic behaviors of $S_1$ and $S_2$.
	
	\textbf{Estimate on $S_1$}. For $0\leqslant n\leqslant 2N-1$, recall the asymptotic expansion of ${}_1F_1$ \cite[Eq. (5.8)]{Lin-Wong 2018}:
	\begin{equation}\label{eq:1F1 asymptotics}
		\begin{split}
			{}_1F_1\left[\begin{matrix}
				a+n\\
				c+n
			\end{matrix};y\right]\sim {}&
			\frac{\Gamma(c+n)}{\Gamma(c-a)}
			\sum_{m=0}^{\infty}\frac{(a+n)_m(1+a-c)_m}{m!}\left(-y\right)^{-a-n-m}\\
			& +\frac{\Gamma(c+n)}{\Gamma(a+n)}\,\me^y
			\sum_{m=0}^{\infty}\frac{(1-a-n)_m(c-a)_m}{m!}y^{a-c-m},\quad y\to\infty,
		\end{split}
	\end{equation}
	where $a,c-a\not\in\ZZ_{\leqslant 0}$. Note that $x=\eta y^{-1}$. Then substituting the first $2N$ terms of both series in \eqref{eq:1F1 asymptotics} into the definition of $S_1$ yields
	\begin{align*}
		S_1={}& \frac{\Gamma(c)}{\Gamma(c-a)}\sum_{n=0}^{2N-1}\frac{(a)_n(b)_n}{n!}\eta^n y^{-n}\sum_{m=0}^{2N-1}\frac{(a+n)_m(1+a-c)_m}{m!}\left(-y\right)^{-a-n-m}\\
		& +\frac{\Gamma(c)}{\Gamma(a)}\sum_{n=0}^{2N-1}\frac{(b)_n}{n!}\eta^n y^{-n}\me^y\sum_{m=0}^{2N-1}\frac{(1-a-n)_m(c-a)_m}{m!}y^{a-c-m}\\
		& +\mo\left(\left|y\right|^{-\Re(a)-2N}+\left|y\right|^{\Re(a-c)-2N}\me^{\Re(y)}\right)\\
		={}& \frac{\Gamma(c)}{\Gamma(c-a)}\left(-y\right)^{-a}\sum_{k=0}^{N-1}c_k^{(1)}(\eta)\,y^{-k}+\frac{\Gamma(c)}{\Gamma(a)}y^{a-c}\me^y\sum_{k=0}^{N-1}c_k^{(2)}(\eta)\,y^{-k}\\
		& +\mo\left(\left|y\right|^{-\Re(a)-N}+\left|y\right|^{\Re(a-c)-N}\me^{\Re(y)}\right),
	\end{align*}
	where $c_k^{(1)}(\eta)$ is given by \eqref{eq:coefficient c_k^1}, and
	\begin{align*}
		c_k^{(2)}(\eta)& =\sum_{\substack{m,n\geqslant 0\\m+n=k}}\frac{(c-a)_m(b)_n(1-a-n)_m}{m!\,n!}\eta^n\\
		& =\sum_{\substack{m,n\geqslant 0\\m+n=k}}\frac{(c-a)_m(a)_n(b)_n(1-a)_{m-n}}{m!\,n!}\left(-\eta\right)^n.
	\end{align*}
	Indeed, we used $(z-n)_m=(-1)^n(z)_{m-n}(z)_n$ in the second identity.
	
	\textbf{Estimate on $S_2$}. For $n\geqslant 2N$, use the estimate \cite[Theorem 2.5]{Hang-Hu-Luo 2024} to get
	\[{}_1F_1\left[\begin{matrix}
		a+n\\
		c+n
	\end{matrix};y\right]=\mo\left(n^{\max\{0,\Re(c-a)\}}+n^{2|a-c|}\left|y\right|^{\Re(a-c)}\me^{\Re(y)}\right),\]
	which confirms that
	\[S_2=\mo\left(\left|x\right|^{2N}+\left|x\right|^{2N}\left|y\right|^{\Re(a-c)}\me^{\Re(y)}\right)=\mo\left(\left|y\right|^{-2N}+\left|y\right|^{\Re(a-c)-2N}\me^{\Re(y)}\right).\]
	
	Combining the estimates on $S_1$ and $S_2$ gives the desired results.
\end{proof}

\subsection{Regime (v): \texorpdfstring{$x\to 1$, $y$ fixed}{}}\label{SubSect-Regime-v}

Now we study the asymptotic behavior of $\Phi_1[x,y]$ near $x=1$ when $a+b-c=0$. Our main tool is the following well-known result \cite[p. 75, Eq. (4)]{EMOT 1981}: 
\begin{align}\label{zbGaussHF-1}
	{}_{2}F_{1}\left[\begin{matrix}
		a,b\\
		a+b
	\end{matrix};z\right]
	&=\frac{\Gamma(a+b)}{\Gamma(a)\Gamma(b)}\sum_{m=0}^{\infty}\frac{(a)_m(b)_m}{\left(m!\right)^2}(1-z)^m\notag\\
	&\hspace{1cm}\cdot\left[2\psi(m+1)-\psi(a+m)-\psi(b+m)-\log(1-z)\right]
\end{align}
where $a+b\in\CC\sm\ZZ_{\leqslant0}$, $|1-z|<1$, $|\arg(1-z)|<\pi$, and $\psi(z)$ denotes the Psi function \cite[p. 15]{EMOT 1981}. We shall frequently use the following functional relation for Psi function \cite[p. 16, Eq. (10)]{EMOT 1981}:
\begin{equation}\label{Psi-functionalrelation-1}
	\psi(z+m)=\psi(z)+\sum_{k=0}^{m-1}\frac{1}{z+k}
	=\psi(z)+\frac{1}{z}\sum_{k=0}^{m-1}\frac{(z)_k}{(z+1)_k}.
\end{equation}
In particular, when $z=1$, \eqref{Psi-functionalrelation-1} reduces to
\begin{equation}\label{Psi-functionalrelation-2}
	\psi(1+m)=-\gamma+\sum_{k=0}^{m-1}\frac{1}{1+k}
	=-\gamma+\sum_{k=0}^{m-1}\frac{(1)_k}{(2)_k},
\end{equation}
where $\gamma=-\psi(1)=0.5772156649\cdots$ is the Euler-Mascheroni constant. 

\begin{theorem}\label{Th-PhiNear1}
	Let $a+b\in\CC\sm\ZZ_{\leqslant 0}$ and $y\in\CC$. As $\rho\to 0$ in $\left|\arg(\rho)\right|<\pi$, we have
	\begin{equation}\label{Th-PhiNear1-1}
		\begin{split}
			\Phi_1[a,b;a+b;1-\rho,y]
			&=-\frac{\Gamma(a+b)}{\Gamma(a)\Gamma(b)}\bigg\{\me^{y}\bigl(2\gamma+\psi(a)+\psi(b)+\log\rho\bigr) \\
			&\hspace{1cm}+\frac{y}{a}\cdot F_{1:0;1}^{0:1;2}\left[\begin{matrix}
				-:& \hspace{-2.5mm}\, 1;& \hspace{-2.5mm}a,1\\
				\;2:& \hspace{-2.5mm} -;& \hspace{-2.5mm}a+1
			\end{matrix};y,y\right]\bigg\}+o(1),
		\end{split}
	\end{equation}
	where
	\[
	F_{1:0;1}^{0:1;2}\left[\begin{matrix}
		-:& \hspace{-2.5mm}\, a;& \hspace{-2.5mm} b,b'\\
		\;c:& \hspace{-2.5mm} -;& \hspace{-2.5mm} d
	\end{matrix};u,v\right]=\sum_{m,n=0}^{\infty}\frac{(a)_m(b)_n(b')_n}{(c)_{m+n}(d)_n}\frac{u^m}{m!}\frac{v^n}{n!}\quad (|u|<\infty, |v|<\infty)
	\]
	is a special case of the famous Kamp\'{e} de F\'{e}riet function {\rm \cite[p. 27]{Srivastava-Karlsson-Book-1985}}.
\end{theorem}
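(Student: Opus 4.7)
The plan is to combine the series representation \eqref{eq:Phi_1 series with 2F1} with the logarithmic ${}_2F_1$ expansion \eqref{zbGaussHF-1} applied termwise to the inner Gauss functions. Since the upper parameters of ${}_2F_1[a+n,b;a+b+n;1-\rho]$ sum precisely to its lower parameter, \eqref{zbGaussHF-1} applies with $1-z=\rho$, and isolating the $m=0$ term yields
\begin{equation*}
{}_2F_1\left[\begin{matrix}a+n,b\\ a+b+n\end{matrix};1-\rho\right]=\frac{\Gamma(a+b+n)}{\Gamma(a+n)\Gamma(b)}\bigl[-2\gamma-\psi(a+n)-\psi(b)-\log\rho\bigr]+R_n(\rho),
\end{equation*}
where the remainder $R_n(\rho)$ collects the tail $m\geqslant 1$ and carries an overall factor $\rho$. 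Substituting into \eqref{eq:Phi_1 series with 2F1} and exploiting the cancellation
\begin{equation*}
\frac{(a)_n}{(a+b)_n}\cdot\frac{\Gamma(a+b+n)}{\Gamma(a+n)\Gamma(b)}=\frac{\Gamma(a+b)}{\Gamma(a)\Gamma(b)}
\end{equation*}
decouples the leading coefficient from $n$, leaving
\begin{equation*}
\Phi_1[a,b;a+b;1-\rho,y]=\frac{\Gamma(a+b)}{\Gamma(a)\Gamma(b)}\left\{\bigl[-2\gamma-\psi(b)-\log\rho\bigr]\me^{y}-\sum_{n=0}^{\infty}\psi(a+n)\frac{y^n}{n!}\right\}+o(1).
\end{equation*}

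Next I would evaluate the residual $\psi$-series by applying the functional equation \eqref{Psi-functionalrelation-1} with $z=a$:
\begin{equation*}
\sum_{n=0}^\infty\psi(a+n)\frac{y^n}{n!}=\psi(a)\me^{y}+\frac{1}{a}\sum_{n=1}^\infty\frac{y^n}{n!}\sum_{k=0}^{n-1}\frac{(a)_k}{(a+1)_k}.
\end{equation*}
Swapping the order of summation in the double sum and reindexing via $n=k+m+1$ converts the tail to
\begin{equation*}
\frac{y}{a}\sum_{k,m\geqslant 0}\frac{(a)_k}{(a+1)_k\,(k+m+1)!}\,y^{k+m},
\end{equation*}
which, upon multiplying and dividing by $k!\,m!$ and invoking $(1)_k=k!$, $(1)_m=m!$ and $(2)_{k+m}=(k+m+1)!$, is precisely $\frac{y}{a}$ times the Kampé de Fériet function displayed in \eqref{Th-PhiNear1-1}. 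Reassembling all pieces with the correct signs produces the claimed expansion.

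The main obstacle is a rigorous treatment of the remainder $\sum_{n=0}^{\infty}\frac{(a)_n}{(a+b)_n}R_n(\rho)\frac{y^n}{n!}$, which requires justifying termwise passage to the limit inside the $n$-sum. For $m\geqslant 1$ the $m$-th coefficient in \eqref{zbGaussHF-1} is $\frac{(a+n)_m(b)_m}{(m!)^2}\rho^m$ multiplied by a Psi-bracket of magnitude $\mo(\log(n+m)+|\log\rho|)$; since $(a+n)_m/m!$ admits a uniform polynomial-in-$n$ majorant for $m\geqslant 1$ and $|\rho|\leqslant r_0<1$, a Tonelli/dominated-convergence argument yields $R_n(\rho)=\mo(|\rho\log\rho|)$ against an absolutely summable $n$-weight, and hence the full remainder is $o(1)$ as $\rho\to 0$, completing the proof.
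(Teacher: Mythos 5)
Your proposal is correct and follows essentially the same route as the paper: both expand via \eqref{eq:Phi_1 series with 2F1} and \eqref{zbGaussHF-1}, exploit the cancellation $\frac{(a)_n}{(a+b)_n}\cdot\frac{\Gamma(a+b+n)}{\Gamma(a+n)\Gamma(b)}=\frac{\Gamma(a+b)}{\Gamma(a)\Gamma(b)}$, evaluate $\sum_{n}\psi(a+n)\frac{y^n}{n!}$ through \eqref{Psi-functionalrelation-1} and the reindexing $n=k+m+1$ to produce the Kamp\'e de F\'eriet function, and discard the $m\geqslant 1$ contribution as $o(1)$ because of its overall factor of $\rho$ (the paper merely organizes the same computation as the split $2S_1-S_2-S_3-S_4$ by which $\psi$ appears, rather than isolating the $m=0$ slice first). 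One small inaccuracy: $(a+n)_m/m!$ is \emph{not} uniformly polynomially bounded in $n$ over all $m\geqslant 1$ (take $m\asymp n$), but your conclusion survives because the joint factor $|\rho|^m(m!)^{-2}|y|^n(n!)^{-1}$ makes the double sum over $n\geqslant 0$, $m\geqslant 1$ absolutely convergent with a bound $\mo\bigl(|\rho|(1+|\log\rho|)\bigr)$, which is all that is needed and is in fact more justification than the paper itself supplies for this step.
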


\begin{proof}
	Suppose $\rho\to 0$ in $\left|\arg(\rho)\right|<\pi$. Applying \eqref{zbGaussHF-1} to \eqref{eq:Phi_1 series with 2F1}, we can obtain 
	\begin{align}\label{Th-PhiNear1-Proof-1}
		\Phi_1[a,b;a+b;1-\rho,y]
		&=\sum_{n=0}^{\infty}\frac{\left(a\right)_n}{\left(a+b\right)_n}\,_2F_1\left[\begin{matrix}
			a+n,b\\
			a+b+n
		\end{matrix};1-\rho\right]\frac{y^n}{n!}\notag \\
		&=\frac{\Gamma(a+b)}{\Gamma(a)\Gamma(b)}
		\sum_{n=0}^{\infty}\sum_{m=0}^{\infty}
		\frac{(a)_{n+m}(b)_m}{(a)_n (1)_m}\notag\\
		&\hspace{1cm}\cdot\left[2\psi(m+1)-\psi(a+n+m)-\psi(b+m)-\log\rho\right]
		\frac{\rho^m}{m!}\frac{y^n}{n!}\notag\\
		&=:\frac{\Gamma(a+b)}{\Gamma(a)\Gamma(b)}\left[2S_1-S_2-S_3-S_4\right]. 
	\end{align}
	
	Let us first evaluate $S_4$ which is the easiest one. Actually, we have 
	\begin{align}\label{Th-PhiNear1-Proof-2}
		S_4&=\log\rho \sum_{n=0}^{\infty}\sum_{m=0}^{\infty}
		\frac{(a)_{n+m}(b)_m}{(a)_n(1)_m}\frac{\rho^m}{m!}\frac{y^n}{n!}\notag\\
		&=\log\rho\sum_{n=0}^{\infty}
		\frac{y^n}{n!}+\log\rho\sum_{n=0}^{\infty}\sum_{m=1}^{\infty}
		\frac{(a)_{n+m}(b)_m}{(a)_n(1)_m}\frac{\rho^m}{m!}\frac{y^n}{n!}\notag\\
		&=\me^y\log\rho +ab\rho\log\rho\sum_{n=0}^{\infty}\sum_{m=0}^{\infty}
		\frac{(a+1)_{n+m}(b+1)_m(1)_m}{(a)_n(2)_m(2)_m}\frac{\rho^m}{m!}\frac{y^n}{n!}\notag\\
		&=\me^{y}\log\rho+o(1).
	\end{align}
	Then we handle $S_1$. By making use of \eqref{Psi-functionalrelation-2}, we have
	\begin{align}\label{Th-PhiNear1-Proof-3}
		S_1&=\sum_{n=0}^{\infty}\sum_{m=0}^{\infty}\frac{(a)_{n+m}(b)_m}{(a)_n(1)_m}\psi(m+1)\frac{\rho^m}{m!}\frac{y^n}{n!} \notag\\
		&=\sum_{n=0}^{\infty}\psi(1)\frac{y^n}{n!}
		+\sum_{n=0}^{\infty}\sum_{m=1}^{\infty}\frac{(a)_{n+m}(b)_m}{(a)_n(1)_m}\psi(m+1)\frac{\rho^m}{m!}\frac{y^n}{n!} \notag\\
		&=-\gamma\,\me^{y}-\gamma\sum_{n=0}^{\infty}\sum_{m=1}^{\infty}\frac{(a)_{n+m}(b)_m}{(a)_n(1)_m}\frac{\rho^m}{m!}\frac{y^n}{n!}
		+\sum_{n=0}^{\infty}\sum_{m=1}^{\infty}\frac{(a)_{n+m}(b)_m}{(a)_n(1)_m}\frac{\rho^m}{m!}\frac{y^n}{n!}\sum_{k=0}^{m-1}\frac{(1)_k}{(2)_k} \notag\\
		&=-\gamma\,\me^{y}-ab\gamma\rho\sum_{n,m=0}^{\infty}\frac{(a+1)_{n+m}(b+1)_{m}(1)_m}{(a)_n(2)_{m}(2)_{m}}\frac{\rho^m}{m!}\frac{y^n}{n!} \notag\\
		&\hspace{1cm}+ab\rho
		\sum_{n,m,k=0}^{\infty}\frac{(a+1)_{n+m+k}(b+1)_{m+k}(1)_m (1)_k(1)_k}{(a)_n(2)_{m+k}(2)_{m+k}(2)_k}\frac{\rho^{m}}{m!}\frac{\rho^k}{k!}\frac{y^n}{n!} \notag\\
		&=-\gamma\,\me^{y}+o(1).
	\end{align}
	To evaluate $S_2$, we employ \eqref{Psi-functionalrelation-1} and thus
	\begin{align}\label{Th-PhiNear1-Proof-4}
		S_2&=\sum_{n,m=0}^{\infty}\frac{(a)_{n+m}(b)_m}{(a)_n(1)_m}\psi(a+n+m)\frac{\rho^m}{m!}\frac{y^n}{n!} \notag\\
		&=\psi(a)
		+\sum_{\substack{n,m=0\\ n+m\neq 0}}^{\infty}\frac{(a)_{n+m}(b)_m}{(a)_n(1)_m}\psi(a+n+m)\frac{\rho^m}{m!}\frac{y^n}{n!} \notag\\
		&=\psi(a)
		+\sum_{\substack{n,m=0\\ n+m\neq0}}^{\infty}\frac{(a)_{n+m}(b)_m}{(a)_n(1)_m}\psi(a)\frac{\rho^m}{m!}\frac{y^n}{n!}
		+\frac{1}{a}\sum_{\substack{n,m=0\\ n+m\neq0}}^{\infty}\frac{(a)_{n+m}(b)_m}{(a)_n(1)_m}\frac{\rho^m}{m!}\frac{y^n}{n!}\sum_{k=0}^{n+m-1}\frac{(a)_k}{(a+1)_k} \notag\\
		&=\psi(a)\underbrace{\sum_{n,m=0}^{\infty}\frac{(a)_{n+m}(b)_m}{(a)_n(1)_m}\frac{\rho^m}{m!}\frac{y^n}{n!}}_{=\me^{y}+o(1),~\rho\to 0}
		+\frac{1}{a}\underbrace{\sum_{\substack{n\geqslant 0\\ m\geqslant 1}}\sum_{k=0}^{n+m-1}\frac{(a)_k}{(a+1)_k}\frac{(a)_{n+m}(b)_m}{(a)_n(1)_m}\frac{\rho^m}{m!}\frac{y^n}{n!}}_{=o(1),~\rho\to 0} \notag\\
		&\hspace{1cm}+\frac{1}{a}\sum_{n\geqslant 1}\sum_{k=0}^{n-1}\frac{(a)_k}{(a+1)_k}\frac{y^n}{n!} \notag\\
		&=\psi(a)\me^{y}+o(1)+\frac{y}{a}\sum_{n,k=0}^{\infty}\frac{(a)_k}{(a+1)_k}\frac{y^{n+k}}{(n+k+1)!} \notag\\
		&=\psi(a)\me^{y}+\frac{y}{a}\cdot F_{1:0;1}^{0:1;2}\left[\begin{matrix}
			-:& \hspace{-2.5mm}\, 1;& \hspace{-2.5mm}a,1\\
			\;2:& \hspace{-2.5mm} -;& \hspace{-2.5mm}a+1
		\end{matrix};y,y\right]+o(1).
	\end{align}
	Finally, we study $S_4$. In fact, we have
	\begin{align}\label{Th-PhiNear1-Proof-5}
		S_4&=\sum_{n,m=0}^{\infty}\frac{(a)_{n+m}(b)_m}{(a)_n(1)_m}\psi(b+m)\frac{\rho^m}{m!}\frac{y^n}{n!} \notag\\
		&=\sum_{n\geqslant 0}\psi(b)\frac{y^n}{n!}+\sum_{\substack{n\geqslant 0\\ m\geqslant 1}}\frac{(a)_{n+m}(b)_m}{(a)_n(1)_m}\psi(b+m)\frac{\rho^m}{m!}\frac{y^n}{n!} \notag\\
		&=\psi(b)\me^y+\rho\sum_{n,m\geqslant 0}\frac{(a)_{n+m+1}(b)_{m+1}}{(a)_n(1)_{m+1}}\psi(b+m+1)\frac{\rho^m}{(m+1)!}\frac{y^n}{n!}\notag\\
		&=\psi(b)\me^y+o(1).
	\end{align}
	
	The desired expansion \eqref{Th-PhiNear1-1} follows by combining the expansions \eqref{Th-PhiNear1-Proof-1}-\eqref{Th-PhiNear1-Proof-5}.
\end{proof}

The asymptotic formulas for the cases $a+b-c\in\mathbb{Z}$ (i.e., $c=a+b\pm m$ with $m\in\mathbb{Z}_{\geqslant 1}$) can be derived in a similar manner by using the formulas in \cite[pp. 74--75]{EMOT 1981}. 

At the end of this subsection, we would also like to mention that the asymptotic behaviors of multivariate hypergeometric series near the boundaries of their convergence regions are very rich and non‑trivial.  Their study has attracted considerable attention over the past few decades; see, for example, \cite{Saigo-1990, Saigo-1996, Saigo-Srivastava-1991, Saigo-Srivastava-1992}.

\section{Applications}\label{Sect: Applications}

In this section, we demonstrate the applicability of our results through several illustrative examples.

\subsection{Analytic continuations of \texorpdfstring{$F_M$}{}}

Saran's function $F_M$ is defined by (\cite[Eq. (2.5)]{Saran 1955})
\begin{equation}\label{eq:F_M definition}
	\begin{split}
		F_M& \equiv F_M[\alpha_1,\alpha_2,\alpha_2,\beta_1,\beta_2,\beta_1;\gamma_1,\gamma_2,\gamma_2;x,y,z]\\
		& :=\sum_{m,n,p=0}^{\infty}\frac{(\alpha_1)_m(\alpha_2)_{n+p}(\beta_1)_{m+p}(\beta_2)_n}{(\gamma_1)_m(\gamma_2)_{n+p}}\frac{x^m}{m!}\frac{y^n}{n!}\frac{z^p}{p!},\quad |x|+|z|<1,|y|<1,
	\end{split}
\end{equation}
where $\alpha_1,\alpha_2,\beta_1,\beta_2\in\CC$ and $\gamma_1,\gamma_2\in\CC\sm\ZZ_{\leqslant 0}$. Here we provide three analytic continuations for $F_M$.

Employing the series manipulation technique, we obtain an equivalent form of \eqref{eq:F_M definition}:
\begin{equation}\label{eq:F_M series}
	\begin{split}
		F_M& [\alpha_1,\alpha_2,\alpha_2,\beta_1,\beta_2,\beta_1;\gamma_1,\gamma_2,\gamma_2;x,y,z]\\
		& \quad =\sum_{n=0}^{\infty}\frac{(\alpha_2)_n(\beta_1)_n}{(\gamma_2)_n}\,_2F_1\left[\begin{matrix}
			\alpha_1,\beta_1+n\\
			\gamma_1
		\end{matrix};x\right]{}_2F_1\left[\begin{matrix}
			\alpha_2+n,\beta_2\\
			\gamma_2+n
		\end{matrix};y\right]\frac{z^n}{n!},
	\end{split}
\end{equation}
which, in view of \eqref{eq:2F1 parameter asymptotics} and the estimate \cite[Eq. (2.2)]{Hang-Luo 2025}
\[\left|{}_2F_1\left[\begin{matrix}
	a+n,b \\
	c
\end{matrix};x\right]\right|=\mo\left(n^{-\min\{\Re(b),\Re(c-b)\}}\big(1+\left|1-x\right|^{-n}\big)\right),\quad n\in\ZZ_{\geqslant 0},n\to\infty,\]
is absolutely convergent in the region
\[\DD_{F_M}:=\left\{(x,y,z)\in\CC^3:|z|<1,~ |z|<|1-x|,~ |y|<\infty\right\}.\]
Therefore, the series expansion \eqref{eq:F_M series} offers an analytic continuation of $F_M$ to the region $\DD_{F_M}$. For more expansion formulae of $F_M$, we refer to \cite[Section 3]{Abiodun-Sharma 1983}.

Substituting the Laplace integral
\[
(\beta_1)_{m+p}=\frac{1}{\Gamma(\beta_1)}\int_0^{\infty}\me^{-t}t^{m+p+\beta_1-1}\md t,\quad \Re(\beta_1)>0
\]
into \eqref{eq:F_M definition} and swapping the order of integration and summation, we obtain the Laplace integral representation for $F_M$:
\begin{equation}\label{eq:F_M Laplace integral}
	\begin{split}
		F_M& [\alpha_1,\alpha_2,\alpha_2,\beta_1,\beta_2,\beta_1;\gamma_1,\gamma_2,\gamma_2;x,y,z]\\
		& \quad =\frac{1}{\Gamma(\beta_1)}\int_0^{\infty}\me^{-t}t^{\beta_1-1}\,_1F_1\left[\begin{matrix}
			\alpha_1\\
			\gamma_1
		\end{matrix};xt\right]\Phi_1\left[\alpha_2,\beta_2;\gamma_2;y,zt\right]\md t,
	\end{split}
\end{equation}
which corrects Saran's original form \cite[p. 134, Eq. (4)]{Saran 1957}. Our results in Section \ref{Sect: Regime (ii)} regarding asymptotics of $\Phi_1[y,zt]$ for large $t$ then provide a sufficient convergence condition for \eqref{eq:F_M Laplace integral}:
\[\Re(x+z)<1,\ \Re(\beta_1)>0.\]
Hence the integral \eqref{eq:F_M Laplace integral} gives the second continuation of $F_M$.

The final continuation of $F_M$ follows from the following Mellin-Barnes integral,
which is established by using \eqref{eq:F_M series} and patterning the analysis in \cite[Section 3.1]{Hang-Luo 2025}.

\begin{theorem}
	Let $\alpha_1,\alpha_2,\beta_1,\beta_2\in\CC,\,\gamma_1,\gamma_2\in\CC\sm\ZZ_{\leqslant 0}$ and
	\[\mathbb{V}_{F_M}=\left\{(x,y,z)\in\CC^3:
	\begin{array}{c}
		x\ne 1,\ y\ne 1,\ \left|\arg(1-x)\right|<\pi,\ \left|\arg(1-y)\right|<\pi,\\[.5ex]
		z\ne 0,\ \left|\arg(-z)\right|<\pi,\\[.5ex]
		\left|\arg(1-x)+\arg(-z)\right|<\pi
	\end{array}
	\right\}.\]
	Then for $(x,y,z)\in\mathbb{V}_{F_M}$,
	\begin{align*}
		F_M& [\alpha_1,\alpha_2,\alpha_2,\beta_1,\beta_2,\beta_1;\gamma_1,\gamma_2,\gamma_2;x,y,z]\\
		& \quad =\frac{1}{2\pi\mi}\frac{\Gamma(\gamma_2)}{\Gamma(\alpha_2)\Gamma(\beta_1)}\int_{L_{\sigma}}{}_2F_1\left[\begin{matrix}
			\alpha_1,\beta_1+s\\
			\gamma_1
		\end{matrix};x\right]{}_2F_1\left[\begin{matrix}
			\alpha_2+s,\beta_2\\
			\gamma_2+s
		\end{matrix};y\right]\frac{\Gamma(\alpha_2+s)\Gamma(\beta_1+s)}{\Gamma(\gamma_2+s)}\Gamma(-s)\left(-z\right)^s\md s,
	\end{align*}
	where the path $L_{\sigma}$, starting at $\sigma-\mi\infty$ and ending at $\sigma+\mi\infty$, is a vertical line intended if necessary to separate the poles of $\Gamma(\alpha_2+s)\Gamma(\beta_1+s)$ from the poles of $\Gamma(-s)$.
\end{theorem}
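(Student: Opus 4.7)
The plan is to interpret the claimed contour integral as a sum of residues of its integrand $\Psi(s)$ at the poles $s=n$ ($n\in\ZZ_{\geqslant 0}$) of $\Gamma(-s)$, which are the only singularities of $\Psi$ to the right of $L_\sigma$. Using $\Res_{s=n}\Gamma(-s)=(-1)^{n+1}/n!$ together with $(-z)^n=(-1)^n z^n$, and converting $\Gamma(\alpha_2+n)\Gamma(\beta_1+n)/\Gamma(\gamma_2+n)$ to Pochhammer symbols via $(\alpha_2)_n=\Gamma(\alpha_2+n)/\Gamma(\alpha_2)$ and its analogues, each $-\Res_{s=n}\Psi(s)$ reproduces precisely one term of the series \eqref{eq:F_M series} once the external prefactor $\Gamma(\gamma_2)/(\Gamma(\alpha_2)\Gamma(\beta_1))$ is accounted for. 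Granting that the contour can be closed to the right with vanishing arc contribution, the residue theorem then yields \eqref{eq:F_M series}, and the theorem follows from the fact that this series is the analytic continuation of $F_M$ to $\mathbb{V}_{F_M}$.

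To justify the contour closure, I would carry out the standard Mellin-Barnes estimates, mirroring \cite[Section 3.1]{Hang-Luo 2025}. By Stirling's formula, the gamma combination $\Gamma(\alpha_2+s)\Gamma(\beta_1+s)\Gamma(-s)/\Gamma(\gamma_2+s)$ decays like $|\Im s|^{C}\me^{-\pi|\Im s|}$ as $|\Im s|\to\infty$ along vertical strips avoiding the integer poles. The factor ${}_2F_1[\alpha_2+s,\beta_2;\gamma_2+s;y]$ falls within Temme's uniform asymptotic \eqref{eq:2F1 parameter asymptotics} (applied with $\alpha=\beta_2$, $\lambda=s$) and is therefore $\mo(1)$, while $(-z)^s$ contributes the elementary growth $|{-z}|^{\Re s}\me^{-\arg(-z)\Im s}$. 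Combined with the control of the remaining hypergeometric factor described below, these estimates produce both absolute convergence on $L_\sigma$ and vanishing of the arcs on semicircles $\{|s|=N+\tfrac{1}{2}\}$ in the right half-plane as $N\to\infty$.

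The main technical obstacle is the factor ${}_2F_1[\alpha_1,\beta_1+s;\gamma_1;x]$. Because its lower parameter $\gamma_1$ is not shifted by $s$, Temme's uniform asymptotic \eqref{eq:2F1 parameter asymptotics} does not apply, and this factor can in fact grow exponentially in $\Im s$. The plan is to estimate it via Laplace-type endpoint analysis on the Euler integral representation
\[
{}_2F_1\!\left[\begin{matrix}\alpha_1,\beta_1+s\\\gamma_1\end{matrix};x\right]
=\frac{\Gamma(\gamma_1)}{\Gamma(\alpha_1)\Gamma(\gamma_1-\alpha_1)}\int_0^1 t^{\alpha_1-1}(1-t)^{\gamma_1-\alpha_1-1}(1-xt)^{-\beta_1-s}\md t,
\]
valid initially under $\Re\gamma_1>\Re\alpha_1>0$ and then extended by analytic continuation in the parameters. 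The endpoint contribution at $t=1$ produces a factor of the form $(1-x)^{-s}$ up to polynomial corrections, whose interaction with $(-z)^s$ is exactly what forces the coupling constraint $|\arg(1-x)+\arg(-z)|<\pi$ built into $\mathbb{V}_{F_M}$. Once this exponential balancing is carried out, the remaining steps duplicate those of the $\Phi_1$ Mellin-Barnes representation \eqref{eq:Phi_1 Mellin-Barnes integral}, and the theorem is established.
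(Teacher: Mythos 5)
Your proposal matches the paper's (largely implicit) proof: the paper obtains this Mellin--Barnes representation precisely by converting the series \eqref{eq:F_M series} into the sum of residues of the integrand at $s=n$ and ``patterning the analysis in \cite[Section 3.1]{Hang-Luo 2025}'', which is exactly the residue computation plus the Stirling/Temme/Euler-integral estimates you outline (your residue bookkeeping, including $\Res_{s=n}\Gamma(-s)=(-1)^{n+1}/n!$ and the conversion of the gamma ratio to Pochhammer symbols, correctly reproduces \eqref{eq:F_M series}). The one point to re-derive carefully is the sign in the coupling condition: balancing the endpoint factor $(1-x)^{-s}$ against $(-z)^{s}$ and the $\me^{-\pi|\Im s|}$ decay of the gamma quotient yields the constraint $|\arg(1-x)-\arg(-z)|<\pi$ rather than the stated sum, so your assertion that the endpoint analysis ``forces'' $|\arg(1-x)+\arg(-z)|<\pi$ does not follow from the computation you sketch (the discrepancy, if real, originates in the theorem statement itself).
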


\subsection{\texorpdfstring{$1D$}{} Glauber-Ising model}

In their seminal work, Godr\`eche and Luck \cite{Godreche-Luck 2000} derived the exact scaling form of the two-time correlation function $C_0(s+\tau,s)$ for the $1D$ Glauber-Ising model. For a system quenched from a fully disordered initial configuration (infinite temperature) to a finite temperature $T>0$, this function is analytically expressed in terms of the Humbert function $\Phi_1$ as follows \cite[Eq. (4.22)]{Godreche-Luck 2000}:
\begin{equation}\label{eq:correlation function}
	C_0(s+\tau,s)=\frac{2}{\pi}\sqrt{\frac{2s}{\tau}}\,\me^{-\frac{1}{2}\mu^2\tau}\,\Phi_1\left[\frac{1}{2},1;\frac{3}{2};-\frac{2s}{\tau},-\mu^2 s\right].
\end{equation}
The physical quantities involved in \eqref{eq:correlation function} are defined as follows:
\begin{itemize}
	\item $s$ and $t=s+\tau$ denote the waiting time and the observation time, respectively;\item $\xi_{\text{eq}}$ is the equilibrium correlation length and $\tau_{\text{eq}}$ is the equilibrium relaxation time;
	
	\item $\mu=1/\xi_{\text{eq}}$ is the inverse correlation length, while $2s/\tau$ and $\mu^2 s=2s/\tau_{\text{eq}}$ serve as the dimensionless scaling variables.
\end{itemize}

Now we provide a mathematical proof of the results stated in Assertions (i) and (ii) of \cite[Section 4]{Godreche-Luck 2000}. These assertions can be formulated as follows:
\begin{itemize}
	\item \textbf{Assertion (i)}: At zero temperature ($\mu=0$), the correlation function simplifies to
	\begin{equation}\label{C_0(t,s) as mu->0}
		\bigl.C_0(s+\tau,s)\bigr|_{\mu=0}=\frac{2}{\pi}\arctan\sqrt{\frac{2s}{\tau}}.
	\end{equation}
	\item \textbf{Assertion (ii)}: For fixed $\tau>0$ and $\mu>0$, in the limit $s\to+\infty$, the correlation function converges to the equilibrium form:
	\begin{equation}\label{C_0(t,s) as s->infty}
		C_{0,\text{eq}}(\tau)=\mathrm{erfc}\left(\sqrt{\frac{\tau}{\tau_{\text{eq}}}}\right),
	\end{equation}
	where $\mathrm{erfc}(z)$ denotes the complementary error function \cite[p. 160]{NIST}.
\end{itemize}

\begin{proof}[{\rm\bf Proof of Assertions}]
	Write $x=2s/\tau$ and $y=\mu^2 s=2s/\tau_{\text{eq}}$, and denote
	\begin{equation}\label{eq:F(x,y) definition}
		F(x,y)\equiv C_0(s+\tau,s)=\frac{2}{\pi}\sqrt{x}\,\me^{-\frac{y}{x}}\Phi_1\left[\frac{1}{2},1;\frac{3}{2};-x,-y\right].
	\end{equation}
	
	(i) When $\mu=0$, we have $y=0$ and thus obtain from \eqref{eq:Phi_1 series with 2F1} that
	\[F(x,0)=\frac{2}{\pi}\sqrt{x}\,\Phi_1\left[\frac{1}{2},1;\frac{3}{2};-x,0\right]=\frac{2}{\pi}\sqrt{x}\cdot{}_2F_1\left[\begin{matrix}
		\frac{1}{2},1\\[.5ex]
		\frac{3}{2} 
	\end{matrix};-x\right].\]
	Using the reduction formula \cite[Eq. (15.4.3)]{NIST}
	\[{}_2F_1\left[\begin{matrix}
		\frac{1}{2},1\\[.5ex]
		\frac{3}{2} 
	\end{matrix};-z^2\right]=\frac{\arctan z}{z},\]
	we then get the equivalent statement of \eqref{C_0(t,s) as mu->0}:
	\[F(x,0)=\frac{2}{\pi}\arctan\sqrt{x}=\frac{2}{\pi}\arctan\sqrt{\frac{2s}{\tau}}.\]
	
	(ii) As $s\to\infty$, we have $x\to +\infty$ and $y\to +\infty$, while $y/x=\tau/\tau_{\text{eq}}$ keeps fixed. Employing our asymptotic expansion \eqref{eq:Phi_1 for large x,y-4} in Theorem \ref{thm: Regime (iii)-3}, we can obtain
	\[\Phi_1\left[\frac{1}{2},1;\frac{3}{2};-x,-y\right]\sim \Gamma\left(\frac{3}{2}\right)U\left(\frac{1}{2},\frac{1}{2},\frac{\tau}{\tau_{\text{eq}}}\right)x^{-\frac{1}{2}},\quad s\to +\infty.\]
	From the identity \cite[Eq. (13.6.8)]{NIST}
	\[U\left(\frac{1}{2},\frac{1}{2},z^2\right)=\sqrt{\pi}\,\me^{z^2}\mathrm{erfc}(z),\]
	we can get
	\[\Phi_1\left[\frac{1}{2},1;\frac{3}{2};-x,-y\right]\sim \frac{\pi}{2\sqrt{x}}\,\me^{\tau/\tau_{\text{eq}}}\,\text{erfc}\left(\sqrt{\frac{\tau}{\tau_{\text{eq}}}}\right).\]
	This together with \eqref{eq:F(x,y) definition} implies the desired limit
	\[\lim_{s\to+\infty}F(x,y)= \text{erfc}\left(\sqrt{\frac{\tau}{\tau_{\text{eq}}}}\right). \qedhere\]
\end{proof}

\subsection{Integral transforms involving \texorpdfstring{$\Phi_1$}{}}
\label{SubSect-IntegralTransform}

Here we discuss two classes of integral transforms whose kernels involve the Humbert function $\Phi_1$.

The first class originates from the work of Tuan, Saigo and Duc \cite{Tuan-Saigo-Duc 1996}. Specifically, they showed that the integral transform
\[
Tf(x)=(k*f)(x)=\int_{\RR}k(x-y)f(y)\md y
\]
is an isomorphism on $M^{\sigma}:=\left.E^{\sigma}\right|_{\RR}\cap L^2(\RR)$, where $E^{\sigma}$ denotes the class of entire functions of type at most $\sigma$. Here the kernel is given by
\[
k(x):=\me^{\mi x}\,\Phi_1[1+\mi\alpha,\beta;2+\mi\gamma;a,b-2\mi x],\quad b\in\CC,\,\alpha,\gamma\in\RR,\,a\notin [1,+\infty).
\]
Notably, the fact that $k\in M^1$ can be verified directly from the asymptotic expansion \eqref{eq:Phi_1 for imaginary y-3}, without appealing to the Paley-Wiener theorem.

The second class appears in the work of Prabhakar \cite{Prabhakar 1972, Prabhakar 1977}. In \cite{Prabhakar 1972}, Prabhakar considered the integral equations
\begin{align}
	\int_a^x\frac{\left(x-t\right)^{\gamma-1}}{\Gamma(\gamma)}\Phi_1\left[\alpha,\beta;\gamma;1-\frac{x}{t},\lambda(x-t)\right]f(t)\md t& =g(x),\quad a<x<b, \label{eq:Prabhakar's equation-1}\\
	\int_a^x\frac{\left(x-t\right)^{\gamma-1}}{\Gamma(\gamma)}\Phi_1\left[\alpha,\beta;\gamma;1-\frac{t}{x},\lambda(x-t)\right]f(t)\md t& =g(x),\quad a<x<b, \label{eq:Prabhakar's equation-2}
\end{align}
with $\Re(\gamma)>0$ and $0<a<b<\infty$. He established solvability criteria for them in $L^1[a,b]$ and gave explicit expressions for the solutions. In \cite{Prabhakar 1977}, analogous results were obtained for a more general equation involving a smooth, strictly increasing function $\mathbf{h}$ on $[\alpha,\beta]$:
\[\int_x^{\beta}\frac{\left[\mathbf{h}(t)-\mathbf{h}(x)\right]^{c-1}}{\Gamma(c)}\Phi_1\left[a,b;c;1-\frac{\mathbf{h}(x)}{\mathbf{h}(t)},\lambda(\mathbf{h}(x)-\mathbf{h}(t))\right]f(t)\md \mathbf{h}(t)=g(x),\quad \alpha<x<\beta.\]

Among the two classes, we focus on the Prabhakar-type fractional integral operators in \eqref{eq:Prabhakar's equation-1} and \eqref{eq:Prabhakar's equation-2}. These operators admit a natural generalization to the interval $[0,b]$ $(0<b<\infty)$ via
\begin{align}
	(A^+(\alpha,\beta,\gamma,\lambda)f)(x)& =\int_0^x\frac{\left(x-t\right)^{\gamma-1}}{\Gamma(\gamma)}\Phi_1\left[\alpha,\beta;\gamma;1-\frac{x}{t},\lambda(x-t)\right]f(t)\md t, \label{eq:A+ operator}\\
	(A^-(\alpha,\beta,\gamma,\lambda)f)(x)& =\int_0^x\frac{\left(x-t\right)^{\gamma-1}}{\Gamma(\gamma)}\Phi_1\left[\alpha,\beta;\gamma;1-\frac{t}{x},\lambda(x-t)\right]f(t)\md t, \label{eq:A- operator}
\end{align}
where $x\in[0,b]$, $\alpha,\beta,\lambda\in\CC$ and $\Re(\gamma)>0$. When $\lambda=0$, these operators reduce to the fractional integral operators studied by Love \cite{Love 1967}:
\[\int_0^x\frac{\left(x-t\right)^{c-1}}{\Gamma(c)}\,_2F_1\left[\begin{matrix}
	a,b\\
	c
\end{matrix};1-\frac{x}{t}\right]f(t)\md t,\quad \int_0^x\frac{\left(x-t\right)^{c-1}}{\Gamma(c)}\,_2F_1\left[\begin{matrix}
a,b\\
c
\end{matrix};1-\frac{t}{x}\right]f(t)\md t.\]

For convenience, we introduce the simplified notation:
\[A^+:=A^+(\alpha,\beta,\gamma,\lambda),\quad A^-:=A^-(\alpha,\beta,\gamma,\lambda).\]
As an application of our main results, we now establish the fundamental properties of the operators $A^+$ and $A^-$. The key is to prove two lemmas, the first of which follows directly from Theorems \ref{Thm: Phi_1 at x=1} and \ref{Thm: Phi_1 for large x}.

\begin{lemma}\label{Lem: Phi_1 estimate}
	{\rm(1)} If $a,b\in\CC,\,c\in\CC\sm\ZZ_{\leqslant 0}$ and $a+b-c\notin\ZZ$, then
	\begin{equation}
		\bigl|\Phi_1[a,b;c;x,y]\bigr|\lesssim \left(1-x\right)^{\max\{0,\Re(c-a-b)\}},\quad x\in[0,1).
	\end{equation}
	
	{\rm(2)} If $a,b\in\CC,\,c\in\CC\sm\ZZ_{\leqslant 0}$ and $a-b\notin\ZZ$, then for any fixed $\epsilon>0$,
	\begin{equation}
		\bigl|\Phi_1[a,b;c;-x,y]\bigr|\lesssim\begin{cases}
			x^{-\min\{\Re(a),\Re(b)\}},& \text{if}~~x\in(1+\epsilon,+\infty),\\[.5ex]
			1,& \text{if}~~x\in[0,1+\epsilon].
		\end{cases}
	\end{equation}
\end{lemma}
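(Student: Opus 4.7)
Both parts are quick corollaries of results in the excerpt: part (1) is extracted from Theorem \ref{Thm: Phi_1 at x=1}, while part (2) combines Theorem \ref{Thm: Phi_1 for large x} with a compactness argument on the bounded regime.

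For part (1), the plan is to insert the connection formula \eqref{eq:Phi_1 at x=1}; the hypothesis $a+b-c\notin\ZZ$ guarantees that the two Gamma quotients in that formula are finite. The key observation is that the $\Psi_1$ series is continuous at first argument $0$, taking the value $1$, so that for $x\in[0,1)$ and $y$ fixed, both
\[
\Psi_1[a,b;a+b-c+1,c-b;1-x,y]\quad \text{and}\quad \Psi_1[c-b,c-a;c-a-b+1,c-b;1-x,y]
\]
are uniformly bounded by an absolute constant. The first summand of \eqref{eq:Phi_1 at x=1} thus contributes $\mo(1)$, and the second contributes $\mo\bigl((1-x)^{\Re(c-a-b)}\bigr)$; combining these two bounds over $1-x\in(0,1]$ yields the claimed estimate.

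For part (2), I would split $[0,\infty)$ into $[0,1+\epsilon]$ and $(1+\epsilon,\infty)$. On the compact piece, $x\mapsto\Phi_1[a,b;c;-x,y]$ is continuous because $(-x,y)\in\DD$ throughout (we have $-x\le 0\ne 1$), so it is bounded by a constant independent of $x$. On the tail I apply Theorem \ref{Thm: Phi_1 for large x} with first argument $-x$ (permitted since $|-x|>1$, $|\arg(x)|=0<\pi$, and $a-b\notin\ZZ$), which produces
\[
\Phi_1[a,b;c;-x,y]=\frac{\Gamma(c)\Gamma(b-a)}{\Gamma(b)\Gamma(c-a)}\,x^{-a}\,V_1(x,y)+\frac{\Gamma(c)\Gamma(a-b)}{\Gamma(a)\Gamma(c-b)}\,x^{-b}\,V_2(x,y),
\]
where $V_1,V_2$ denote the two power series in $x^{-1}$ occurring in \eqref{eq:Phi_1 series for |x|>1}. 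By the Weierstrass $M$-test with reference point $x=1+\epsilon$, each $V_j(\cdot,y)$ is uniformly bounded on $x\ge 1+\epsilon$ for fixed $y$. Using $x>1$ to absorb $x^{-a}+x^{-b}$ into $x^{-\min\{\Re(a),\Re(b)\}}$ then completes the argument.

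The main obstacle I anticipate is the uniform tail estimate $|V_j(x,y)|\lesssim 1$ on the half-line $x\ge 1+\epsilon$: the coefficients involve ${}_1F_1$ values with denominator parameter $c-a-k$ or $c-b-k$, and these can grow with $k$. However, since Theorem \ref{Thm: Phi_1 for large x} already asserts absolute convergence at every $|x|>1$, the $M$-test reduces this uniform bound to ordinary convergence at the single point $x=1+\epsilon$, so no finer asymptotic analysis of the coefficients is required.
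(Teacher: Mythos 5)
Your route is the paper's own: the authors merely remark that this lemma ``follows directly from Theorems \ref{Thm: Phi_1 at x=1} and \ref{Thm: Phi_1 for large x}'' and give no further details, and your two-part argument (connection formula at $x=1$ for part (1); large-$x$ expansion plus compactness on the bounded piece for part (2)) is exactly what they intend. Part (2) is sound as written, with one caveat: the Weierstrass majorant must come from the \emph{absolute} convergence of the series in \eqref{eq:Phi_1 series for |x|>1} at the reference point (mere convergence at $x=1+\epsilon$ only gives boundedness of the terms, which is not enough for a uniform bound on the open half-line $(1+\epsilon,\infty)$); since the theorem does provide absolute convergence for every $|x|>1$, taking $M_k:=|c_k|(1+\epsilon)^{-k}$ closes the argument.

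Two points in part (1) need attention. First, a gap near $x=0$: $\Psi_1$ at first argument $0$ equals ${}_1F_1[a;c-b;y]$, not $1$ (harmless), but the continuation of $\Psi_1$ to $\DD$ excludes first argument equal to $1$, and the $\Psi_1$ double series itself diverges there; so your claim that the two $\Psi_1$ factors are uniformly bounded over all of $1-x\in(0,1]$ is unjustified at and near the endpoint $1-x=1$. The fix is routine: apply \eqref{eq:Phi_1 at x=1} only for $x\in[\tfrac12,1)$ and bound $\Phi_1$ on the compact set $[0,\tfrac12]$ directly by continuity. Second, and more substantively: combining the $\mo(1)$ contribution of the first summand with the $\mo\bigl((1-x)^{\Re(c-a-b)}\bigr)$ contribution of the second yields the exponent $\min\{0,\Re(c-a-b)\}$, \emph{not} $\max\{0,\Re(c-a-b)\}$. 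Since $0<1-x\leqslant 1$, the $\max$ exponent gives the \emph{smaller} bound, and it is false whenever $\Re(c-a-b)>0$: in that case $\Phi_1[a,b;c;x,y]\to\frac{\Gamma(c)\Gamma(c-a-b)}{\Gamma(c-a)\Gamma(c-b)}\,{}_1F_1[a;c-b;y]\ne 0$ as $x\to 1^-$, while the claimed bound tends to $0$. Your argument therefore proves the (correct) $\min$ version; the printed statement appears to contain a $\max$/$\min$ slip, and indeed the only consequence used later in the paper is $(t/x)^{\xi}\leqslant 1$, which survives either reading. You should say explicitly that what you have established is the $\min$ form rather than asserting that your two bounds ``yield the claimed estimate.''
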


The second lemma gives explicit expressions for how the operators $A^+$ and $A^-$ act on a power function.
\begin{lemma}
	{\rm(1)} If $\Re(\gamma)>0$ and $\Re(\rho)>-\min\{\Re(\alpha),\Re(\beta)\}-1$, then for $x\in (0,b]$,
	\begin{equation}\label{eq:A+ on power function}
		(A^+ t^{\rho})(x)=\frac{\Gamma(\rho+\alpha+1)\Gamma(\rho+\beta+1)}{\Gamma(\rho+\gamma+1)\Gamma(\rho+\alpha+\beta+1)}x^{\rho+\gamma}\,_2F_2\left[\begin{matrix}
			\alpha,\rho+\alpha+1\\
			\rho+\gamma+1,\rho+\alpha+\beta+1
		\end{matrix};\lambda x\right].
	\end{equation}
	
	{\rm(2)} If $\Re(\gamma)>0$ and $\Re(\rho)>\max\{\Re(\alpha+\beta-\gamma),0\}-1$, then for $x\in (0,b]$,
	\begin{equation}\label{eq:A- on power function}
		(A^- t^{\rho})(x)=\frac{\Gamma(\rho+1)\Gamma(\rho+\gamma-\alpha-\beta+1)}{\Gamma(\rho+\gamma-\alpha+1)\Gamma(\rho+\gamma-\beta+1)}x^{\rho+\gamma}\,_1F_1\left[\begin{matrix}
			\alpha\\
			\rho+\gamma-\beta+1
		\end{matrix};\lambda x\right].
	\end{equation}
\end{lemma}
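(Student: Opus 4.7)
The plan is to handle both parts by the same template: substitute $t=x(1-v)$ to normalize the integration interval to $[0,1]$, expand the Humbert function $\Phi_1$ in a suitable series, interchange the order of summation and integration, evaluate the resulting Beta integrals, and sum the remaining hypergeometric series.

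For part (2), the substitution transforms the integral into
\[
(A^-t^\rho)(x) = \frac{x^{\rho+\gamma}}{\Gamma(\gamma)}\int_0^1 v^{\gamma-1}(1-v)^\rho \Phi_1[\alpha,\beta;\gamma;v,\lambda xv]\,\md v.
\]
Since $v\in(0,1)$ lies within the domain of convergence of the defining double series \eqref{eq:Phi_1 definition}, I would expand $\Phi_1$ directly. After interchange (justified by Fubini--Tonelli using absolute convergence) and the evaluation $\int_0^1 v^{\gamma-1+m+n}(1-v)^\rho\,\md v = \Gamma(\gamma+m+n)\Gamma(\rho+1)/\Gamma(\rho+\gamma+m+n+1)$, the double sum collapses to $\Phi_1[\alpha,\beta;\rho+\gamma+1;1,\lambda x]$. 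The reduction formula $\Phi_1[a,b;c;1,y]=\frac{\Gamma(c)\Gamma(c-a-b)}{\Gamma(c-a)\Gamma(c-b)}\,{}_1F_1[a;c-b;y]$ recorded in Section~\ref{Sect: Preliminaries} then yields \eqref{eq:A- on power function}; the Gauss condition $\Re(c-a-b)>0$ corresponds precisely to the stated hypothesis $\Re(\rho+\gamma-\alpha-\beta+1)>0$.

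For part (1), the same substitution gives
\[
(A^+t^\rho)(x) = \frac{x^{\rho+\gamma}}{\Gamma(\gamma)}\int_0^1 v^{\gamma-1}(1-v)^\rho \Phi_1\!\left[\alpha,\beta;\gamma;-\frac{v}{1-v},\lambda xv\right]\md v,
\]
but now the first argument $-v/(1-v)$ ranges through $(-\infty,0)$, so the defining series of $\Phi_1$ diverges on most of the interval. The remedy is to invoke the expansion \eqref{eq:Phi_1 series with 2F1}, which remains valid on $\CC\sm[1,\infty)$, and apply the Pfaff transformation \eqref{eq:Pfaff transformation} to each ${}_2F_1$ summand to obtain
\[
\Phi_1\!\left[\alpha,\beta;\gamma;-\frac{v}{1-v},\lambda xv\right] = (1-v)^\beta \sum_{n\geqslant 0} \frac{(\alpha)_n}{(\gamma)_n}\,{}_2F_1\!\left[\begin{matrix}\gamma-\alpha,\beta\\\gamma+n\end{matrix};v\right]\frac{(\lambda xv)^n}{n!}.
\]
Expanding the inner ${}_2F_1$ as a series in $m$, exchanging order, and computing $\int_0^1 v^{\gamma-1+n+m}(1-v)^{\rho+\beta}\md v$ reduces the problem to a double hypergeometric sum over $n,m$. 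Summing over $m$ first yields ${}_2F_1[\gamma-\alpha,\beta;\rho+\gamma+\beta+1+n;1]$, which Gauss's theorem evaluates in closed form (requiring $\Re(\rho+\alpha+1)>0$). The resulting outer series in $n$ then collapses into the ${}_2F_2$ appearing in \eqref{eq:A+ on power function}, and the accumulated Gamma-function quotients simplify to the stated prefactor $\Gamma(\rho+\alpha+1)\Gamma(\rho+\beta+1)/[\Gamma(\rho+\gamma+1)\Gamma(\rho+\alpha+\beta+1)]$.

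The principal obstacle is the failure of the defining series in part (1); the Pfaff transformation dispatches this cleanly. The remaining manipulations are routine: the interchange of summation with integration is handled by Fubini--Tonelli using elementary bounds on ${}_2F_1$ over $(0,1)$, and the joint parameter conditions $\Re(\gamma)>0$, $\Re(\rho+\alpha+1)>0$, and $\Re(\rho+\beta+1)>0$ (the latter two being equivalent to $\Re(\rho)>-\min\{\Re(\alpha),\Re(\beta)\}-1$) ensure convergence of every intermediate sum and integral.
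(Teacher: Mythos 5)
Your proposal is correct. For part (1) it is essentially the paper's own argument: the paper likewise expands $\Phi_1$ via the ${}_2F_1$-series \eqref{eq:Phi_1 series with 2F1} in the second variable, applies the Pfaff transformation \eqref{eq:Pfaff transformation} to move the Gauss argument into $[0,1)$, and then evaluates the resulting Euler-type integral; the only cosmetic difference is that the paper quotes the tabulated formula $\int_0^x t^{\alpha-1}(x-t)^{\beta-1}{}_2F_1[a,b;\alpha;t/x]\,\md t$ from Prudnikov et al., whereas you rederive it by expanding the ${}_2F_1$ and summing with Gauss's theorem --- which is exactly how that table entry is proved, and your bookkeeping of the parameter conditions ($\Re(\rho+\alpha+1)>0$ for Gauss, $\Re(\rho+\beta+1)>0$ and $\Re(\gamma)>0$ for the Beta integrals) is accurate. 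For part (2) the paper gives no details (it only remarks that the computation is similar, which would mean repeating the ${}_2F_1$-series route, where no Pfaff step is even needed since $1-t/x\in[0,1)$); your route is genuinely different and arguably cleaner: the substitution $t=x(1-v)$ turns the integral into a Beta-transform of the defining double series \eqref{eq:Phi_1 definition}, which collapses to $\frac{\Gamma(\rho+1)}{\Gamma(\rho+\gamma+1)}\,\Phi_1[\alpha,\beta;\rho+\gamma+1;1,\lambda x]$, and the reduction formula for $\Phi_1$ at $x=1$ recorded in Section \ref{Sect: Preliminaries} finishes the job, with its hypothesis $\Re(c-a-b)>0$ matching the stated condition $\Re(\rho)>\Re(\alpha+\beta-\gamma)-1$ exactly. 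This second route buys a conceptual explanation of where the closed form \eqref{eq:A- on power function} comes from (it is the Gauss/Kummer evaluation of $\Phi_1$ on the boundary $x=1$), at the cost of a slightly more delicate Tonelli justification near $v=1$, which your stated hypotheses do cover.
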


\begin{proof}
	The computations of the two integrals are similar, so we only give the details for \eqref{eq:A+ on power function}.
	
	Assume that $\Re(\gamma)>0$ and $\Re(\rho)>-\min\{\Re(\alpha),\Re(\beta)\}-1$. From \eqref{eq:Phi_1 series with 2F1} and \eqref{eq:Pfaff transformation} we have
	\begin{align*}
		(A^+ t^{\rho})(x)& =\int_0^x\frac{\left(x-t\right)^{\gamma-1}}{\Gamma(\gamma)}\Phi_1\left[\alpha,\beta;\gamma;1-\frac{x}{t},\lambda(x-t)\right]t^{\rho}\md t\\
		& =\frac{1}{\Gamma(\gamma)}\sum_{n=0}^{\infty}\frac{(\alpha)_n}{(\gamma)_n}\frac{\lambda^n}{n!}\int_0^x t^{\rho}\left(x-t\right)^{\gamma+n-1}{}_2F_1\left[\begin{matrix}
			\beta,\alpha+n\\
			\gamma+n
		\end{matrix};1-\frac{x}{t}\right]\md t\\
		& =\frac{x^{-\beta}}{\Gamma(\gamma)}\sum_{n=0}^{\infty}\frac{(\alpha)_n}{(\gamma)_n}\frac{\lambda^n}{n!}\int_0^x t^{\rho+\beta}\left(x-t\right)^{\gamma+n-1}{}_2F_1\left[\begin{matrix}
			\beta,\gamma-\alpha\\
			\gamma+n
		\end{matrix};\frac{x-t}{x}\right]\md t\\
		& =\frac{x^{-\beta}}{\Gamma(\gamma)}\sum_{n=0}^{\infty}\frac{(\alpha)_n}{(\gamma)_n}\frac{\lambda^n}{n!}\int_0^x t^{\gamma+n-1}\left(x-t\right)^{\rho+\beta}{}_2F_1\left[\begin{matrix}
			\beta,\gamma-\alpha\\
			\gamma+n
		\end{matrix};\frac{t}{x}\right]\md t,
	\end{align*}
	where the interchange of integration and summation is justified by the dominated convergence theorem.
	
	The integral \eqref{eq:A+ on power function} then follows from the identity \cite[p. 314, Eq. (6)]{Integrals & Series v3}
	\[
	\int_0^x t^{\alpha-1}\left(x-t\right)^{\beta-1}{}_2F_1\left[\begin{matrix}
		a,b\\
		\alpha
	\end{matrix};\frac{t}{x}\right]\md t=\frac{\Gamma(\alpha)\Gamma(\beta)\Gamma(\alpha+\beta-a-b)}{\Gamma(\alpha-a+\beta)\Gamma(\alpha-b+\beta)}x^{\alpha+\beta-1},
	\]
	which holds for $x>0,\,\Re(\alpha)>0,\,\Re(\beta)>0$ and $\Re(\alpha+\beta)>\Re(a+b)$.
\end{proof}

Next, we establish the mapping properties of the operator $A^+$ on the weighted $L^p$ spaces for $1\leqslant p\leqslant\infty$.

\begin{theorem}
	Let $0<b<\infty,\,\alpha-\beta\in\CC\sm\ZZ$ and $\eta:=\min\{\Re(\alpha),\Re(\beta)\}$.
	
	{\rm(1)} Let $1\leqslant p<\infty$ and $\frac{1}{p}+\frac{1}{q}=1$. If $\Re(\gamma)>\frac{1}{q}$, then the operator $A^+$ is bounded from $L^p((0,b),\mu)$ into $L^p(0,b)$, where $\mu$ is the measure on $(0,b)$ defined by
	\begin{equation}\label{eq:measure mu}
		\md\mu(t):=\begin{cases}
			t^{\eta}\left(b-t\right)^{\Re(\gamma)-\frac{1}{q}}\md t, & \text{if}~~\Re(\gamma)>\eta+\frac{1}{q},\\[1ex]
			t^{\eta}\left(b-t\right)^{\eta}\left(\log\frac{b+1}{t}\right)^{\frac{1}{p}}\md t, & \text{if}~~\Re(\gamma)=\eta+\frac{1}{q},\\[1ex]
			t^{\Re(\gamma)-\frac{1}{q}}\left(b-t\right)^{\Re(\gamma)-\frac{1}{q}}\md t, & \text{if}~~\Re(\gamma)<\eta+\frac{1}{q}.
		\end{cases}
	\end{equation}
	Hence, when $\Re(\gamma)>\frac{1}{q}$, the integral $A^+ f$ is defined for a function $f(t)\in L^p((0,b),\mu)$.
	
	{\rm(2)} If $\Re(\gamma)>\max\{0,\eta\}$ and $\eta>-1$, then the operator $A^+$ is bounded in $L^{\infty}(0,b)$.
\end{theorem}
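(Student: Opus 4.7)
The plan is to use Lemma \ref{Lem: Phi_1 estimate}(2) to reduce $A^+$ to a Riemann-Liouville-type fractional integral with a power weight, after which the bounds follow from standard Hardy/H\"older-type estimates.

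First I would derive a uniform pointwise bound on the kernel. For $0 < t \leqslant x \leqslant b$, set $w := (x-t)/t \geqslant 0$; then $1 - x/t = -w$ and $|\lambda(x-t)| \leqslant |\lambda| b$ is uniformly bounded, so Lemma \ref{Lem: Phi_1 estimate}(2) (whose applicability is precisely the role of the hypothesis $\alpha - \beta \notin \ZZ$) combines its two clauses into
\[
\bigl|\Phi_1[\alpha,\beta;\gamma; 1 - x/t,\, \lambda(x-t)]\bigr| \lesssim (1+w)^{-\eta} = (t/x)^{\eta},
\]
valid uniformly in $x, t$. Substituting into \eqref{eq:A+ operator} yields the pointwise estimate
\[
|(A^+ f)(x)| \lesssim x^{-\eta} \int_0^x (x-t)^{\Re(\gamma)-1} t^{\eta} |f(t)| \, dt, \qquad x \in (0, b].
\]

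For part (2), I would insert $|f(t)| \leqslant \|f\|_{\infty}$ and evaluate the Beta integral to obtain $|(A^+ f)(x)| \lesssim \|f\|_{\infty} B(\Re(\gamma), \eta+1)\, x^{\Re(\gamma)}$, which is uniformly bounded on $[0, b]$. The hypotheses $\Re(\gamma) > \max\{0, \eta\}$ and $\eta > -1$ are what ensure convergence of the Beta integral and the boundedness of $x^{\Re(\gamma)}$ at the origin; in the subcase $\eta \geqslant 0$ one should actually split the integral at $t = x/(2+\epsilon)$ (as in the two clauses of Lemma \ref{Lem: Phi_1 estimate}(2)) to avoid losing a factor of $x^{-\eta}$ when $\eta > \Re(\gamma)$, but each piece gives $x^{\Re(\gamma)}$ after direct calculation.

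For part (1), I would apply H\"older's inequality to the pointwise bound with an auxiliary weight $v(t) > 0$,
\[
|(A^+ f)(x)|^p \lesssim x^{-\eta p} \biggl(\int_0^x (x-t)^{(\Re(\gamma)-1) q} t^{\eta q} v(t)^{-q/p} \, dt\biggr)^{p/q} \int_0^x v(t) |f(t)|^p \, dt,
\]
then integrate in $x$ and invoke Fubini to obtain $\|A^+ f\|_{L^p}^p \lesssim \int_0^b V(t) |f(t)|^p \, dt$ for an explicit weight $V$ that one attempts to match with $d\mu/dt$. The three regimes of $\mu$ correspond to three natural choices of $v$: for $\Re(\gamma) > \eta + 1/q$ one takes $v(t) = t^{\eta}(b-t)^{\Re(\gamma)-1/q}$; for $\Re(\gamma) < \eta + 1/q$ one takes $v(t) = t^{\Re(\gamma)-1/q}(b-t)^{\Re(\gamma)-1/q}$, which shifts the critical endpoint of integrability inside the bracket from $t = 0$ to $t = x$. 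The hypothesis $\Re(\gamma) > 1/q$ is the minimal condition ensuring that the factor $(x-t)^{(\Re(\gamma)-1)q+1}$ stays integrable near $t = x$ for every $v$ considered.

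The principal obstacle will be the critical case $\Re(\gamma) = \eta + 1/q$: at this threshold the Beta-type integral inside the bracket diverges logarithmically for every purely algebraic choice of $v$, and one must enrich the weight by the factor $(\log((b+1)/t))^{1/p}$ exactly as in the statement of $\mu$ and then redo the inner-integral estimate by hand, tracking the logarithmic growth. This is the usual borderline phenomenon of weighted Hardy inequalities; it demands additional care but no new ideas.
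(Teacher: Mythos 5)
Your uniform kernel bound $\bigl|\Phi_1[\alpha,\beta;\gamma;1-x/t,\lambda(x-t)]\bigr|\lesssim (t/x)^{\eta}$ is correct — both clauses of Lemma \ref{Lem: Phi_1 estimate}(2) are absorbed into $(1+w)^{-\eta}$ on their respective ranges of $w=x/t-1$ — and it settles part (2): the Beta integral gives $|(A^+f)(x)|\lesssim \|f\|_{\infty}\,x^{\Re(\gamma)}$ under $\Re(\gamma)>0$ and $\eta>-1$, so the extra splitting you mention is unnecessary. This part is essentially the paper's argument in compressed form.

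Part (1), however, has a genuine gap. Applying H\"older in $t$ inside the pointwise bound forces you to integrate $(x-t)^{(\Re(\gamma)-1)q}$ up to the moving endpoint $t=x$, where the weight $v(t)^{-q/p}$ cannot help because $v$ does not depend on $x$. Integrability there requires $(\Re(\gamma)-1)q>-1$, i.e.\ $\Re(\gamma)>1-\tfrac{1}{q}=\tfrac{1}{p}$ — not $\Re(\gamma)>\tfrac{1}{q}$ as you assert. For $1<p<2$ one has $\tfrac{1}{p}>\tfrac{1}{q}$, so for $\Re(\gamma)\in\bigl(\tfrac{1}{q},\tfrac{1}{p}\bigr]$ your inner integral diverges for every admissible $v$ and the scheme cannot reach the stated hypothesis; for $p=1$ the H\"older step degenerates entirely. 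This is exactly why the paper runs the duality the other way: it applies Minkowski's integral inequality first, so that the diagonal singularity appears as $\int_t^b(x-t)^{p(\Re(\gamma)-1)}\,\md x$, whose convergence needs $p(\Re(\gamma)-1)>-1$, i.e.\ precisely $\Re(\gamma)>\tfrac{1}{q}$. After that, the paper splits the $x$-integral at $x=3t$ (matching the two clauses of Lemma \ref{Lem: Phi_1 estimate}(2)), evaluates the three resulting elementary integrals, and reads off the weight; the borderline logarithm in the case $\Re(\gamma)=\eta+\tfrac{1}{q}$ falls out of the same computation rather than requiring a separately enriched $v$. To repair your argument, replace the pointwise H\"older-plus-Fubini step by Minkowski's integral inequality applied to $\|A^+f\|_{L^p(0,b)}$; your uniform kernel bound can be retained and the rest then goes through.
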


\begin{proof}
	Assertion (2) is direct from Lemma \ref{Lem: Phi_1 estimate}. We therefore focus on proving assertion (1) for the case $\Re(\gamma)>\max\big\{\frac{1}{q},\eta+\frac{1}{q}\big\}$, since the remaining two cases are treated analogously.
	
	Assume that $1\leqslant p<\infty,\,\frac{1}{p}+\frac{1}{q}=1$ and $\Re(\gamma)>\max\bigl\{\frac{1}{q},\eta+\frac{1}{q}\bigr\}$. Let $f(t)\in L^p((0,b),\mu)$. By Minkowski's integral inquality, we have
	\begin{align}
		\left\|A^+ f\right\|_{L^p(0,b)}& =\biggl(\int_0^b\biggl|\int_0^x\frac{\left(x-t\right)^{\gamma-1}}{\Gamma(\gamma)}\Phi_1\left[\alpha,\beta;\gamma;1-\frac{x}{t},\lambda(x-t)\right]f(t)\md t\biggr|^p\md x\biggr)^{\frac{1}{p}} \notag\\
		& \leqslant \frac{1}{|\Gamma(\gamma)|}\int_0^b |f(t)|\biggl(\int_t^b\left(x-t\right)^{p(\Re(\gamma)-1)}\left|\Phi_1\left[\alpha,\beta;\gamma;1-\frac{x}{t},\lambda(x-t)\right]\right|^p\md x\biggr)^{\frac{1}{p}}\md t. \label{eq:bound for A+f norm-1}
	\end{align}
	
	Now define for $0<t<x\leqslant b$
	\[K(x,t)=\left(x-t\right)^{\Re(\gamma)-1}\left|\Phi_1\left[\alpha,\beta;\gamma;1-\frac{x}{t},\lambda(x-t)\right]\right|.\]
	Split the upper bound in \eqref{eq:bound for A+f norm-1} as follows:
	\[\left\|A^+ f\right\|_{L^p(0,b)}\lesssim \int_0^{\frac{b}{3}}|f(t)|\biggl\{\biggl(\int_t^{3t}+\int_{3t}^b\biggr)K\!\left(x,t\right)^p\md x\biggr\}^{\frac{1}{p}}\md t+\int_{\frac{b}{3}}^b |f(t)|\biggl(\int_t^b K\!\left(x,t\right)^p\md x\biggr)^{\frac{1}{p}}\md t.\]
	From Lemma \ref{Lem: Phi_1 estimate}, we obtain the trivial estimates on $K(x,t)$
	\[K(x,t)\lesssim\begin{cases}
		\left(x-t\right)^{\Re(\gamma)-1}, & \text{if}~~t\in (0,b/3),\,x\in (t,3t],\\[.5ex]
		t^{\eta}\left(x-t\right)^{\Re(\gamma)-\eta-1}, & \text{if}~~t\in (0,b/3),\,x\in (3t,b],\\[.5ex]
		\left(x-t\right)^{\Re(\gamma)-1}, & \text{if}~~t\in (b/3,b),\,x\in (t,b].
	\end{cases}\]
	Taking into account that $p(\Re(\gamma)-1)+1>0$ and $p(\Re(\gamma)-\eta-1)+1>0$, we have
	\begin{align*}
		\left\|A^+ f\right\|_{L^p(0,b)}
		&\lesssim \int_0^{\frac{b}{3}}|f(t)|\biggl\{\int_t^{3t}\left(x-t\right)^{p(\Re(\gamma)-1)}\md x+t^{p\eta}\int_{3t}^b\left(x-t\right)^{p(\Re(\gamma)-\eta-1)}\md x\biggr\}^{\frac{1}{p}}\md t\\
		&\hspace{0.5cm}+\int_{\frac{b}{3}}^b |f(t)|\biggl(\int_t^b\left(x-t\right)^{p(\Re(\gamma)-1)}\md x\biggr)^{\frac{1}{p}}\md t\\
		&\lesssim\int_0^{\frac{b}{3}}|f(t)|\left\{t^{p(\Re(\gamma)-1)+1}+t^{p\eta}\left(b-t\right)^{p(\Re(\gamma)-\eta-1)+1}\right\}^{\frac{1}{p}}\md t+\int_{\frac{b}{3}}^b |f(t)|\left(b-t\right)^{\Re(\gamma)-\frac{1}{q}}\md t.
	\end{align*}
	The Minkowski inequality $\left(u+v\right)^{\frac{1}{p}}\leqslant u^{\frac{1}{p}}+v^{\frac{1}{p}}$ gives
	\begin{equation}\label{eq:bound for A+f norm-2}
		\left\|A^+ f\right\|_{L^p(0,b)}\lesssim\int_0^{\frac{b}{3}}|f(t)|t^{\Re(\gamma)-\frac{1}{q}}\md t+\int_0^{\frac{b}{3}}|f(t)|t^{\eta}\left(b-t\right)^{\Re(\gamma)-\eta-\frac{1}{q}}\md t+\int_{\frac{b}{3}}^b |f(t)|\left(b-t\right)^{\Re(\gamma)-\frac{1}{q}}\md t.
	\end{equation}
	Since $\Re(\gamma)>\max\bigl\{\frac{1}{q},\eta+\frac{1}{q}\bigr\}$ and $f\in L^p((0,b),\mu)$, we derive
	\[
	\int_0^b|f(t)|t^{\eta}\left(b-t\right)^{\Re(\gamma)-\frac{1}{q}}\md t<\infty,
	\]
	which is equivalent to the convergence of the integrals in \eqref{eq:bound for A+f norm-2}. This completes the proof.
\end{proof}

The mapping properties of the operator $A^-$ on the weighted $L^p$ spaces will become clearer.
\begin{theorem}
	Let $0<b<\infty$, and $\alpha,\beta,\gamma\in\CC$ with $\alpha+\beta-\gamma\notin\ZZ$.
	
	{\rm(1)} Let $1\leqslant p<\infty$ and $\frac{1}{p}+\frac{1}{q}=1$. If $\Re(\gamma)>\frac{1}{q}$, then the operator $A^-$ is bounded from $L^p((0,b),\mu)$ into $L^p(0,b)$, where $\mu$ is the measure on $(0,b)$ given by $\md\mu:=\left(b-t\right)^{\Re(\gamma)-\frac{1}{q}}\md t$.
	
	{\rm(2)} If $\Re(\gamma)>0$, then the operator $A^-$ is bounded in $L^{\infty}(0,b)$.
\end{theorem}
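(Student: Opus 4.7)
The plan is to exploit a crucial simplifying feature of $A^-$ compared with $A^+$: for $0<t<x$ the first argument $1-t/x$ of the Humbert factor lies in $[0,1)$, so Lemma \ref{Lem: Phi_1 estimate}(1) applies directly and without the need to track any blowup. Since $\max\{0,\Re(\gamma-\alpha-\beta)\}\geqslant 0$ and $t/x\in(0,1]$, this yields the crude uniform bound
\[\bigl|\Phi_1[\alpha,\beta;\gamma;1-t/x,\lambda(x-t)]\bigr|\lesssim 1,\quad 0<t<x\leqslant b,\]
with an implied constant that is uniform in $t,x$ (the second argument $\lambda(x-t)$ ranges over a compact set, so uniformity there is transparent from the Euler integral \eqref{eq:Phi_1 Euler integral}). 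This reduces both assertions to elementary weighted kernel estimates.

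First I would dispose of assertion (2). For $f\in L^{\infty}(0,b)$, substituting the uniform bound gives
\[\bigl|(A^- f)(x)\bigr|\lesssim \|f\|_{\infty}\int_0^x(x-t)^{\Re(\gamma)-1}\md t=\frac{x^{\Re(\gamma)}}{\Re(\gamma)}\|f\|_{\infty}\lesssim b^{\Re(\gamma)}\|f\|_{\infty},\]
where the inner integral converges precisely because $\Re(\gamma)>0$; this uses nothing beyond the stated hypothesis.

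For assertion (1), I would follow the template of the preceding $A^+$ proof but with a far shorter endgame. Minkowski's integral inequality produces
\[\|A^- f\|_{L^p(0,b)}\leqslant \frac{1}{|\Gamma(\gamma)|}\int_0^b|f(t)|\biggl(\int_t^b(x-t)^{p(\Re(\gamma)-1)}\bigl|\Phi_1[\alpha,\beta;\gamma;1-t/x,\lambda(x-t)]\bigr|^p\md x\biggr)^{1/p}\md t.\]
Inserting $|\Phi_1|\lesssim 1$ and evaluating the inner integral, which converges precisely under the hypothesis $\Re(\gamma)>1/q$, yields
\[\|A^- f\|_{L^p(0,b)}\lesssim \int_0^b|f(t)|(b-t)^{\Re(\gamma)-1/q}\md t.\]
A final application of H\"older's inequality with the same weight $(b-t)^{\Re(\gamma)-1/q}$ bounds the right-hand side by $\|f\|_{L^p((0,b),\mu)}$ times the finite constant $\bigl(\int_0^b(b-t)^{\Re(\gamma)-1/q}\md t\bigr)^{1/q}$; finiteness follows from $\Re(\gamma)-1/q>0>-1$.

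The contrast with the $A^+$ proof is instructive and explains why no major obstacle arises here: the three-case splitting of the measure in \eqref{eq:measure mu} and the subdivision of $(0,b)$ into near- and far-diagonal pieces were both forced by the fact that the first argument of $\Phi_1$ could leave the unit disk, where $\Phi_1$ grows like $x^{-\min\{\Re(\alpha),\Re(\beta)\}}$. Here the argument is confined to $[0,1)$, the growth issue disappears, and a single weight $(b-t)^{\Re(\gamma)-1/q}$ suffices. The only mild subtlety I foresee is certifying that the implied constant in Lemma \ref{Lem: Phi_1 estimate}(1) is uniform in the second variable over compact sets, but this is routine from \eqref{eq:Phi_1 Euler integral}.
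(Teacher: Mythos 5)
Your proposal is correct and follows essentially the same route as the paper: Lemma \ref{Lem: Phi_1 estimate}(1) gives $|\Phi_1|\lesssim (t/x)^{\xi}\leqslant 1$ with $\xi=\max\{0,\Re(\gamma-\alpha-\beta)\}\geqslant 0$, Minkowski's integral inequality reduces everything to the inner integral $\int_t^b(x-t)^{p(\Re(\gamma)-1)}\md x$, and the hypothesis $\Re(\gamma)>\frac{1}{q}$ yields the weight $(b-t)^{\Re(\gamma)-\frac{1}{q}}$. Your explicit final H\"older step (using finiteness of $\mu$) and your remark on uniformity of the implied constant in the second argument over compact sets are details the paper leaves implicit, but they do not constitute a different approach.
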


\begin{proof}
	Assertion (2) is direct from Lemma \ref{Lem: Phi_1 estimate}. Now suppose the assumptions for assertion (1) are valid. Let $f(t)\in L^p((0,b),\mu)$. Applying Lemma \ref{Lem: Phi_1 estimate} and Minkowski's integral inquality, we have
	\begin{align*}
		\left\|A^- f\right\|_{L^p(0,b)}& =\biggl(\int_0^b\biggl|\int_0^x\frac{\left(x-t\right)^{\gamma-1}}{\Gamma(\gamma)}\Phi_1\left[\alpha,\beta;\gamma;1-\frac{t}{x},\lambda(x-t)\right]f(t)\md t\biggr|^p\md x\biggr)^{\frac{1}{p}}\\
		& \lesssim \frac{1}{|\Gamma(\gamma)|}\biggl(\int_0^b\biggl(\int_0^x\left(x-t\right)^{\Re(\gamma)-1}\left(t/x\right)^{\xi}|f(t)|\md t\biggr)^p\md x\biggr)^{\frac{1}{p}}\\
		& \lesssim\int_0^b|f(t)|\biggl(\int_t^b\left(x-t\right)^{p(\Re(\gamma)-1)}\left(t/x\right)^{\xi}\md x\biggr)^{\frac{1}{p}}\md t,
	\end{align*}
	where $\xi:=\max\{0,\Re(\gamma-\alpha-\beta)\}$. Since $\xi\geqslant 0$, the inequality $(t/x)^{\xi}\leqslant 1$ holds for $0<t<x$. Thus
	\[
	\left\|A^- f\right\|_{L^p(0,b)}\lesssim\int_0^b|f(t)|\left(b-t\right)^{\Re(\gamma)-\frac{1}{q}}\md t,
	\]
	which establishes the boundedness of $A^-$.
\end{proof}

We establish the asymptotic expansion of $A^+ f$ when $f$ has an algebraic singularity at the origin.
\begin{theorem}
	Let $\Re(\gamma)>0,\,\alpha-\beta\in\CC\sm\ZZ$ and $\eta:=\min\{\Re(\alpha),\Re(\beta)\}$. Suppose $f\in L^1_{\mathrm{loc}}(0,b]$ and that
	\[f(t)\sim \sum_{n=0}^{\infty}a_n t^{\rho+n},\quad t\to 0^+,\]
	where $\Re(\rho)>-\eta-1$. Then the fractional integral $(A^+ f)(x)$ has the asymptotic expansion as $x\to 0^+$
	\begin{equation}\label{eq:A+f expansion}
		(A^+ f)(x)\sim \frac{\Gamma(\rho+\alpha+1)\Gamma(\rho+\beta+1)}{\Gamma(\rho+\gamma+1)\Gamma(\rho+\alpha+\beta+1)}\sum_{n=0}^{\infty}\sigma_n x^{\rho+\gamma+n},
	\end{equation}
	where $\sigma_0=1$ and in general,
	\begin{equation}\label{eq:coefficient sigma_n}
		\sigma_n=\frac{(\rho+\alpha+1)_n}{(\rho+\gamma+1)_n(\rho+\alpha+\beta+1)_n}\sum_{k=0}^n 
		a_k(\rho+\beta+1)_k\frac{ (\alpha)_{n-k}}{(n-k)!}\lambda^{n-k}.
	\end{equation}
\end{theorem}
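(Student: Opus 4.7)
The plan is to split $f$ into its first $N$ asymptotic terms plus a remainder, compute $A^{+}$ on the polynomial part exactly via formula \eqref{eq:A+ on power function}, and show that the remainder contributes only $\mo\bigl(x^{\Re(\rho)+\Re(\gamma)+N}\bigr)$ using the sharp bounds of Lemma \ref{Lem: Phi_1 estimate}.

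For each integer $N\geqslant 1$, write $f(t)=P_N(t)+R_N(t)$ with $P_N(t):=\sum_{n=0}^{N-1}a_n t^{\rho+n}$ and $|R_N(t)|\lesssim t^{\Re(\rho)+N}$ as $t\to 0^{+}$. Applying \eqref{eq:A+ on power function} to each monomial and expanding the resulting ${}_2F_2$ as a power series in $\lambda x$, the coefficient of $x^{\rho+\gamma+k}$ in $(A^{+}P_N)(x)$, for $0\leqslant k\leqslant N-1$, reads
\[
\sum_{n=0}^{k} a_n \frac{\Gamma(\rho+n+\alpha+1)\Gamma(\rho+n+\beta+1)}{\Gamma(\rho+n+\gamma+1)\Gamma(\rho+n+\alpha+\beta+1)}\cdot\frac{(\alpha)_{k-n}\,(\rho+n+\alpha+1)_{k-n}}{(\rho+n+\gamma+1)_{k-n}(\rho+n+\alpha+\beta+1)_{k-n}}\cdot\frac{\lambda^{k-n}}{(k-n)!}.
\]
Using $(\rho+n+\xi+1)_{k-n}=\Gamma(\rho+k+\xi+1)/\Gamma(\rho+n+\xi+1)$ for $\xi\in\{\alpha,\gamma,\alpha+\beta\}$, the $n$-dependent Gamma factors telescope in three of the four ratios; after pulling out the common constant $\Gamma(\rho+\alpha+1)\Gamma(\rho+\beta+1)/[\Gamma(\rho+\gamma+1)\Gamma(\rho+\alpha+\beta+1)]$, the remaining sum is precisely $\sigma_k$. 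The truncation tails of the ${}_2F_2$-series (terms with $m\geqslant N-n$) contribute $\mo\bigl(x^{\Re(\rho)+\Re(\gamma)+N}\bigr)$ because each ${}_2F_2$ is entire in $\lambda x$.

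The principal obstacle is the remainder estimate $(A^{+}R_N)(x)=\mo\bigl(x^{\Re(\rho)+\Re(\gamma)+N}\bigr)$, since the kernel $\Phi_1[\alpha,\beta;\gamma;1-x/t,\lambda(x-t)]$ is singular as $t\to 0^{+}$. Fix a small $\epsilon>0$ and split
\[
(A^{+}R_N)(x)=\Bigl(\int_{0}^{x/(2+\epsilon)}+\int_{x/(2+\epsilon)}^{x}\Bigr)\frac{(x-t)^{\gamma-1}}{\Gamma(\gamma)}\,\Phi_1\Bigl[\alpha,\beta;\gamma;1-\tfrac{x}{t},\lambda(x-t)\Bigr]R_N(t)\,\md t.
\]
On $[x/(2+\epsilon),x]$ we have $1-x/t\in[-(1+\epsilon),0]$ and $|\lambda(x-t)|$ is bounded, so Lemma \ref{Lem: Phi_1 estimate}(2) (bounded-argument case) yields $|\Phi_1|=\mo(1)$; direct integration against $|R_N(t)|\lesssim t^{\Re(\rho)+N}$ delivers the required bound. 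On $(0,x/(2+\epsilon)]$ we have $x/t-1>1+\epsilon$, so Lemma \ref{Lem: Phi_1 estimate}(2) gives $|\Phi_1|\lesssim(x/t-1)^{-\eta}$; the substitution $t=xs$ converts this piece into $x^{\Re(\rho)+\Re(\gamma)+N}\int_{0}^{1/(2+\epsilon)}(1-s)^{\Re(\gamma)-1}(1/s-1)^{-\eta}s^{\Re(\rho)+N}\,\md s$, and the $s$-integral is convergent precisely because the hypothesis $\Re(\rho)>-\eta-1$ renders the $s=0$ singularity integrable. Combining the two pieces completes the proof.
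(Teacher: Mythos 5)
Your proposal is correct and follows essentially the same route as the paper: truncate $f$ after $N$ terms, apply \eqref{eq:A+ on power function} termwise, expand the ${}_2F_2$ and rearrange the Gamma/Pochhammer factors to produce $\sigma_n$, and control $A^+R_N$ via Lemma \ref{Lem: Phi_1 estimate}. Your two-region split of the remainder integral is in fact slightly more careful than the paper's single global bound $\left(\frac{x}{t}-1\right)^{-\eta}$ on all of $(0,x)$ (which overshoots near $t=x$ when $\eta>0$ and would formally require $\Re(\gamma)>\eta$ in the resulting Beta integral); otherwise the two arguments coincide.
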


\begin{proof}
	Since $x\to 0^+$ and $t\in (0,x)$ implies $t\to 0^+$, we can write for any integer $N\geqslant 1$
	\[f(t)=\sum_{k=0}^{N-1}a_k t^{\rho+k}+R_N(t),\]
	where the remainder satisfies $R_N(t)=\mo(t^{\Re(\rho)+N})$ for $t\in (0,x)$. Thus
	\begin{equation}\label{eq:A+f expansion-1}
		(A^+ f)(x)=\sum_{k=0}^{N-1}a_k(A^+ t^{\rho+k})(x)+(A^+ R_N)(x).
	\end{equation}
	
	Denote by $S_N(x)$ the finite sum in \eqref{eq:A+f expansion-1}. Then it follows from \eqref{eq:A+ on power function} that as $x\to 0^+$,
	\begin{align*}
		S_N(x)={}& \sum_{k=0}^{N-1}a_k x^{\rho+\gamma+k}\frac{\Gamma(\rho+k+\alpha+1)\Gamma(\rho+k+\beta+1)}{\Gamma(\rho+k+\gamma+1)\Gamma(\rho+k+\alpha+\beta+1)}\,_2F_2\left[\begin{matrix}
			\alpha,\rho+k+\alpha+1\\
			\rho+k+\gamma+1,\rho+k+\alpha+\beta+1
		\end{matrix};\lambda x\right]\\
		={}& \mo\Bigl(x^{\Re(\rho+\gamma)+N}\Bigr)+\sum_{k=0}^{N-1}a_k x^{\rho+\gamma+k}\frac{\Gamma(\rho+k+\alpha+1)\Gamma(\rho+k+\beta+1)}{\Gamma(\rho+k+\gamma+1)\Gamma(\rho+k+\alpha+\beta+1)}\\
		& \qquad\qquad\qquad\qquad\times\sum_{m=0}^{N-1}\frac{\left(\alpha\right)_m\left(\rho+k+\alpha+1\right)_m}{\left(\rho+k+\gamma+1\right)_m\left(\rho+k+\alpha+\beta+1\right)_m}\frac{\lambda^m x^m}{m!}\\
		={}& \mo\Bigl(x^{\Re(\rho+\gamma)+N}\Bigr)+\frac{\Gamma(\rho+\alpha+1)\Gamma(\rho+\beta+1)}{\Gamma(\rho+\gamma+1)\Gamma(\rho+\alpha+\beta+1)}\sum_{k,m=0}^{N-1}a_k x^{\rho+\gamma+k+m}\\
		& \qquad\qquad\qquad\qquad\times\frac{\left(\alpha\right)_m\left(\rho+\alpha+1\right)_{k+m}\left(\rho+\beta+1\right)_k}{\left(\rho+\gamma+1\right)_{k+m}\left(\rho+\alpha+\beta+1\right)_{k+m}}\frac{\lambda^m}{m!}\\
		={}& \frac{\Gamma(\rho+\alpha+1)\Gamma(\rho+\beta+1)}{\Gamma(\rho+\gamma+1)\Gamma(\rho+\alpha+\beta+1)}\sum_{n=0}^{N-1}\sigma_n x^{\rho+\gamma+n}+\mo\Bigl(x^{\Re(\rho+\gamma)+N}\Bigr),
	\end{align*}
	where $\sigma_n$ is given in \eqref{eq:coefficient sigma_n}. Applying Lemma \ref{Lem: Phi_1 estimate}, we obtain
	\begin{align*}
		|(A^+ R_N)(x)|& \lesssim \int_0^x\left(x-t\right)^{\Re(\gamma)-1}\left(\frac{x}{t}-1\right)^{-\eta}t^{\Re(\rho)+N}\md t\\
		& =B(\Re(\gamma)+\eta,\Re(\rho)+\eta+N+1)x^{\Re(\rho+\gamma)+N},
	\end{align*}
	which yields the expansion \eqref{eq:A+f expansion} and completes the proof.
\end{proof}

We conclude this section with the asymptotic expansion of $A^- f$, whose proof is similar to that for $A^+ f$.
\begin{theorem}
	Let $\Re(\gamma)>0,\,\alpha+\beta-\gamma\in\CC\sm\ZZ$ and $\Re(\rho)>\max\{\Re(\alpha+\beta-\gamma),0\}-1$. Suppose $f\in L^1_{\mathrm{loc}}(0,b]$ and that
	\[f(t)\sim \sum_{n=0}^{\infty}a_n t^{\rho+n},\quad t\to 0^+.\]
	Then the fractional integral $(A^- f)(x)$ has the asymptotic expansion as $x\to 0^+$
	\begin{equation}\label{eq:A-f expansion}
		(A^- f)(x)\sim \frac{\Gamma(\rho+1)\Gamma(\rho+\gamma-\alpha-\beta+1)}{\Gamma(\rho+\gamma-\alpha+1)\Gamma(\rho+\gamma-\beta+1)}\sum_{n=0}^{\infty}\tau_n x^{\rho+\gamma+n},
	\end{equation}
	where $\tau_0=1$ and in general,
	\begin{equation}\label{eq:coefficient tau_n}
		\tau_n=\frac{1}{\left(\rho+\gamma-\beta+1\right)_n}\sum_{k=0}^n
		a_k\frac{\left(\rho+1\right)_k\left(\rho+\gamma-\alpha-\beta+1\right)_k}{\left(\rho+\gamma-\alpha+1\right)_k}\frac{(\alpha)_{n-k}}{(n-k)!}\lambda^{n-k}.
	\end{equation}
\end{theorem}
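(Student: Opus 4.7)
The plan is to mirror the treatment of $(A^+ f)(x)$ above, exploiting the explicit formula \eqref{eq:A- on power function} together with Lemma \ref{Lem: Phi_1 estimate}(1). The key structural difference from the $A^+$ case is that for the $A^-$ kernel the first argument $1-t/x$ of $\Phi_1$ lies in $[0,1)$ whenever $0<t<x$, so the singular behaviour of the kernel arises from $\Phi_1$ near its first argument equal to $1$ (i.e., as $t\to 0^+$), rather than from large values as in the $A^+$ analysis.

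Fix an integer $N\geqslant 1$ and split $f(t)=\sum_{k=0}^{N-1}a_k t^{\rho+k}+R_N(t)$ with $R_N(t)=\mo(t^{\Re(\rho)+N})$ as $t\to 0^+$, so that
\[(A^-f)(x)=\sum_{k=0}^{N-1}a_k(A^- t^{\rho+k})(x)+(A^- R_N)(x).\]
For the main sum I would apply \eqref{eq:A- on power function} to each monomial and truncate the ${}_1F_1$ series at order $N$. Factoring out the $k$-independent prefactor via the identity $\Gamma(\rho+k+1)=(\rho+1)_k\Gamma(\rho+1)$ (and the analogous identities for the other three Gamma factors), together with the Pochhammer identity $(\rho+k+\gamma-\beta+1)_m=(\rho+\gamma-\beta+1)_{k+m}/(\rho+\gamma-\beta+1)_k$, reorganises the double sum indexed by $(k,m)$ according to $n=k+m$ and yields precisely the coefficients $\tau_n$ in \eqref{eq:coefficient tau_n}, modulo an error of order $\mo\bigl(x^{\Re(\rho+\gamma)+N}\bigr)$.

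The main obstacle is the bound on the remainder integral $(A^- R_N)(x)$. Here I would invoke Lemma \ref{Lem: Phi_1 estimate}(1), which is applicable because $\alpha+\beta-\gamma\notin\ZZ$: since $|\lambda(x-t)|\leqslant|\lambda|b$ is bounded on $(0,b]$, one obtains uniformly in $x\in(0,b]$ and $0<t<x$ the estimate
\[\Bigl|\Phi_1\bigl[\alpha,\beta;\gamma;1-\tfrac{t}{x},\lambda(x-t)\bigr]\Bigr|\lesssim (t/x)^{\xi},\qquad \xi:=\max\{0,\Re(\gamma-\alpha-\beta)\},\]
where the implicit constant absorbs the bounded contribution from the second argument. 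Therefore
\[|(A^- R_N)(x)|\lesssim x^{-\xi}\int_0^x(x-t)^{\Re(\gamma)-1}t^{\Re(\rho)+N+\xi}\md t=B\bigl(\Re(\gamma),\Re(\rho)+N+\xi+1\bigr)\,x^{\Re(\rho+\gamma)+N},\]
and the convergence of this Beta integral is guaranteed precisely by $\xi+\Re(\rho)+1>0$, which is equivalent to the hypothesis $\Re(\rho)>\max\{\Re(\alpha+\beta-\gamma),0\}-1$. Combining this estimate with the main sum then yields the asserted expansion \eqref{eq:A-f expansion}.
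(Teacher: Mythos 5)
Your proposal is correct and follows essentially the same route as the paper, which proves the $A^+$ case in detail and states that the $A^-$ case is analogous: split off the first $N$ monomials, apply the power-function formula \eqref{eq:A- on power function} with the Pochhammer reindexing $n=k+m$ to recover $\tau_n$, and bound $(A^-R_N)(x)$ via Lemma \ref{Lem: Phi_1 estimate}(1), correctly noting that part (1) rather than part (2) of the lemma is needed here because $1-t/x\in[0,1)$. The only slight inaccuracy is the claim that convergence of the remainder's Beta integral is \emph{equivalent} to the hypothesis $\Re(\rho)>\max\{\Re(\alpha+\beta-\gamma),0\}-1$; in fact that hypothesis is strictly stronger (it is what the $k=0$ instance of \eqref{eq:A- on power function} requires), and it merely implies the weaker condition $\Re(\rho)+\xi+1>0$ you need for the Beta integral, so the argument is unaffected.
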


\begin{remark}
	If the function $f$ admits the asymptotic expansion
	\[f(t)\sim\sum_{n=0}^{\infty}a_n t^{\mu_n},\quad t\to 0^+,\]
	where the exponents $\{\mu_n\}$ do not form an arithmetic progression, then the functions $A^+ f$ and $A^- f$ may fail to admit an asymptotic expansion of the same type. Indeed, applying the operator $A^+$ or $A^-$ produces terms of the form $t^{\mu_n+m}$, and the family
	\[\left\{t^{\mu_n+m}: n,m \geqslant 0\right\}\]
	does not, in general, form an ordered asymptotic scale required for such an expansion.
\end{remark}

\section{Discussion}\label{Sect: Discussion}

We have derived full asymptotic expansions of the Humbert function $\Phi_1$ in various limiting cases of its variables. However, we have yet to obtain the error bounds for these expansions. In addition, the asymptotic expansions in Theorems \ref{thm: Regime (iv)-1} and \ref{thm: Regime (iv)-2} are restrictive in the sense that the variables must remain bounded away from a countable set of exceptional values. This is because the uniformity approach used in the proof has significant limitations (see \cite[Section 7]{Hang-Henkel-Luo 2026}).

We now propose three potential schemes to improve the uniformity approach:
\begin{enumerate}
	\item Derive the uniform asymptotic expansions for the generalized hypergeometric function ${}_pF_q$ when at least one of the parameters and the variable $z$ becomes large.
	
	\item Explore high-dimensional generalizations of the uniformity approach.
	
	\item (Temme's problem) Define $\mu=\lambda/z$ and let
	\begin{equation}\label{eq:gamma ratio}
		G(z,\lambda)=\frac{\Gamma(z+\lambda)}{\Gamma(z)},\quad \left|\arg(z+\lambda)\right|<\pi,\,z\in\CC.
	\end{equation}
	Derive a uniform asymptotic expansion for $G(z,\lambda)$ as $z\to\infty$ in the sector $\left|\arg(z)\right|<\pi$, which remains valid for $\mu$ satisfying $\left|\arg(1+\mu)\right|<\pi$.
\end{enumerate}
Scheme 1 extends the work of Temme and Veling \cite{Temme-Veling 2022} on uniform asymptotic expansions for ${}_1F_1[a;b;z]$ with positive $a,b,z$ when at least one quantity is large. Going beyond this single-variable case, Scheme 2 necessitates a deeper theory of asymptotics for​ multivariate hypergeometric functions. Moreover, Scheme 3 was proposed by Temme in \cite{Temme 1985}, where he obtained results for the case of real $z,\mu>0$.

Let us demonstrate​ the broad connections of Scheme 3 (Temme's problem) via​ a concrete example. To lift the restrictions on the asymptotic expansions in Theorems \ref{thm: Regime (iv)-1} and \ref{thm: Regime (iv)-2}, we must treat with care the uniform asymptotic expansion of
\begin{equation}\label{eq:f_n(z) definition}
	f_n(z):={}_2F_2\left[\begin{matrix}
		a,b-n\\
		c,d-n
	\end{matrix};-z\right],\quad n\in\ZZ_{\geqslant 0},z\in\CC.
\end{equation}
More generally, it is interesting to consider the asymptotics of $f_n(z)$ in two regimes: (a) as $z\to\infty$, uniformly in $n\in\ZZ_{\geqslant 0}$; and (b) as $n\to\infty$ with $z\in\CC$ fixed. Recall the Mellin-Barnes integral for $f_n(z)$
\begin{equation}\label{eq:Mellin-Barnes integral for f_n(z)}
	\frac{\Gamma(a)\Gamma(b-n)}{\Gamma(c)\Gamma(d-n)}f_n(z)=\frac{1}{2\pi\mi}\int_{-\mi\infty}^{\mi\infty}\frac{\Gamma(a+s)\Gamma(b-n+s)}{\Gamma(c+s)\Gamma(d-n+s)}\Gamma(-s)z^s\md s.
\end{equation}
In both regimes, the index $n$ is unbounded. Therefore, as $n$ increases and $s$ varies along the imaginary axis, $\arg(b-n+s)$ sweeps through $(-\pi,-\frac{\pi}{2})\cup (\frac{\pi}{2},\pi)$, which naturally raises Temme's problem. From \eqref{eq:gamma ratio} and \eqref{eq:Mellin-Barnes integral for f_n(z)}, we can obtain
\[\frac{\Gamma(a)\Gamma(b-n)}{\Gamma(c)\Gamma(d-n)}f_n(z)=\frac{1}{2\pi\mi}\int_{-\mi\infty}^{\mi\infty}\frac{\Gamma(a+s)\Gamma(b+s)}{\Gamma(c+s)\Gamma(d+s)}\frac{G(b+s,-n)}{G(d+s,-n)}\Gamma(-s)z^s\md s.\]
In regime (a), Temme's problem provides the unconditional expansion of \cite[Theorem 2.4]{Hang-Hu-Luo 2024}, while in regime (b), it yields \cite[Theorem 4.3]{Hang-Luo 2024}, a special case of Blaschke's \emph{more down conjecture} (see \cite[p. 1791]{Blaschke 2016} and \cite[Section 7]{Hang-Henkel-Luo 2026}). As such, Temme's problem serves as a key tool for advancing both the uniformity approach and the more down conjecture, and will be the subject of further investigation in our future work.

Finally, we note that Saran's $F_M$ function appears to have finer properties compared to Saran's $F_K$ function. The fundamental properties of $F_M$ could be systematically derived by adapting techniques from prior studies on $F_K$ \cite{Hang-Luo 2025, Luo-Raina 2021, Luo-Xu-Raina 2022}. Very recently, Dmytryshyn and his co-workers \cite{Dmytryshyn-Nyzhnyk 2025, Dmytryshyn-Nyzhnyk 2025b, N-D-A 2025} considered the problem of approximating $F_M$ functions and their ratios by branched continued fractions and also proposed some open problems. We anticipate that further properties of $F_M$ will be uncovered.

\section*{Acknowledgement} We are grateful to Roman Dmytryshyn for bringing his work on $F_M$ to our attention, to Malte Henkel for insightful suggestions on the paper's structure and the physical applications, and to Gerg\H{o} Nemes for many enlightening and stimulating discussions. The first author was supported by the Fundamental Research Funds for the Central Universities (CUSF-DH-T-2025025).

\section*{Conflicts of interest} The authors declare that there is no conflict of interest.

\section*{Data Availability} This manuscript has no associated data.

\section*{Author Contributions} All the authors have contributed equally. All authors have read and approved the final manuscript.

\appendix
\section{Proof of Eq. \texorpdfstring{\eqref{SummationTh-Humbert}}{}}\label{Appendix}

First, let $c=a-b+1$ and $x=-1$ in \eqref{eq:Phi_1 series with 2F1} to obtain
\begin{equation}\label{SummationTh-Humbert-Proof-1}
	\Phi_1[a,b;a-b+1;-1,y] =\sum_{n=0}^{\infty}\frac{(a)_n}{(a-b+1)_n}\,_2F_1\left[\begin{matrix}
		a+n,b\\
		a-b+1+n
	\end{matrix};-1\right]\frac{y^n}{n!}.
\end{equation}
Then applying Kummer's summation formula \cite[p. 68, Theorem 26]{Rainville 1960}
\[
{}_{2}F_{1}\left[\begin{matrix}
	a,b\\
	a-b+1
\end{matrix};-1\right]=\frac{\Gamma(1+a-b)\Gamma(1+\frac{a}{2})}{\Gamma(1+\frac{a}{2}-b)\Gamma(1+a)},
\quad
1+a-b\notin\mathbb{Z}_{\leqslant 0}, \Re(b)<1
\]
to \eqref{SummationTh-Humbert-Proof-1} yields 
\begin{align*}
	\Phi_1[a,b;a-b+1;-1,y]
	&=\frac{\Gamma(1+a-b)}{\Gamma(1+a)}\sum_{n=0}^{\infty}\frac{(a)_n}{(1+a)_n}\frac{\Gamma(1+\frac{a}{2}+\frac{n}{2})}{\Gamma(1+\frac{a}{2}+\frac{n}{2}-b)}\frac{y^n}{n!}\\
	&=\frac{\Gamma(1+a-b)}{\Gamma(1+a)}\bigg\{\frac{\Gamma(1+\frac{a}{2})}{\Gamma(1+\frac{a}{2}-b)}\sum_{k=0}^{\infty}\frac{(a)_{2k}}{(1+a)_{2k}}\frac{(1+\frac{a}{2})_k}{(1+\frac{a}{2}-b)_k}\frac{y^{2k}}{(2k)!}\\
	&\hspace{1cm}+\frac{a}{1+a}\frac{\Gamma(\frac{a}{2}+\frac{3}{2})}{\Gamma(\frac{a}{2}+\frac{3}{2}-b)}y\cdot\sum_{k=0}^{\infty}\frac{(a+1)_{2k}}{(2+a)_{2k}}\frac{(\frac{a}{2}+\frac{3}{2})_k}{(\frac{a}{2}+\frac{3}{2}-b)_k}\frac{y^{2k}}{(2k+1)!}\bigg\}.
\end{align*}
Taking into account that $(a)_{2k}= 2^{2k}(\frac{a}{2})_k(\frac{a}{2}+\frac{1}{2})_k$, we have
\begin{equation}\label{SummationTh-Humbert-Proof-2} 
	\begin{aligned}
		\Phi_1[a,b;a-b+1;-1,y]
		&=\frac{\Gamma(1+a-b)}{\Gamma(1+a)}\Bigg\{\frac{a}{2}\frac{\Gamma(\frac{a}{2})}{\Gamma(1+\frac{a}{2}-b)}\sum_{k=0}^{\infty}\frac{(\frac{a}{2})_k}{(\frac{1}{2})_k (1+\frac{a}{2}-b)_k}\frac{\bigl(\frac{y^{2}}{4}\bigr)^k}{k!}\\
		&\hspace{1cm}+\frac{a}{2}\frac{\Gamma(\frac{a}{2}+\frac{1}{2})}{\Gamma(\frac{a}{2}+\frac{3}{2}-b)}y\cdot\sum_{k=0}^{\infty}\frac{(\frac{a}{2}+\frac{1}{2})_k}{(\frac{3}{2})_k(\frac{a}{2}+\frac{3}{2}-b)_k}\frac{\bigl(\frac{y^{2}}{4}\bigr)^k}{k!}\Bigg\}.
	\end{aligned}
\end{equation}
The formula \eqref{SummationTh-Humbert} now follows by interpreting the series in \eqref{SummationTh-Humbert-Proof-2} as the ${}_{1}F_{2}$ functions. 

Finally, we want to point out that the expansion obtained by Tremblay and Lavertu in \cite[p. 15, Eq. (3.4)]{Tremblay-Lavertu-1972} can be further simplified with the help of \eqref{SummationTh-Humbert}.



\end{document}